\documentclass[12pt]{amsart}
\usepackage{amssymb, amsmath}






\usepackage[all]{xy}

\numberwithin{equation}{section}

%
%
\newcommand{\mm}{\mathfrak m}
%
%

\newcommand{\N}{\mathbb{N}}
\newcommand{\PP}{\mathbb{P}}

\newcommand{\Z}{\mathbb{Z}}
%
%

\newcommand{\Ic}{\mathcal{I}}

\newcommand{\Mcc}{\mathcal{M}}
\newcommand{\Nc}{\mathcal{N}}

%
%
\newcommand{\Ab}{{\bf A}}

\newcommand{\Bb}{{\bf B}}

\newcommand{\Fb}{{\bf F}}

\newcommand{\Ib}{{\bf I}}
\newcommand{\Fi}[1]{{\bf F}^{\FI, #1}}
\newcommand{\Fo}[1]{{\bf F}^{\FIO, #1}}
\newcommand{\Kb}{{\bf K}}
\newcommand{\M}{{\bf M}}
\newcommand{\Nb}{{\bf N}}
\newcommand{\Pb}{{\bf P}}

\newcommand{\Xb}{{\bf X}}
\newcommand{\XI}[1]{{\bf X}^{\FI, #1}}
\newcommand{\XO}[1]{{\bf X}^{\FIO, #1}}

%
%
\DeclareMathOperator{\Alg}{Alg}

\DeclareMathOperator{\FI}{FI}

\DeclareMathOperator{\Hom}{Hom}
\DeclareMathOperator{\id}{id}
\DeclareMathOperator{\im}{im}
\DeclareMathOperator{\ini}{in}
\DeclareMathOperator{\Inc}{Inc (\N)}
\DeclareMathOperator{\ind}{ind}

\DeclareMathOperator{\lc}{lc}

\DeclareMathOperator{\lm}{lm}
\DeclareMathOperator{\lt}{lt}
\DeclareMathOperator{\Mod}{Mod}

\DeclareMathOperator{\FIO}{OI}

\DeclareMathOperator{\reg}{reg}

\DeclareMathOperator{\sgn}{sgn}

\DeclareMathOperator{\Sym}{Sym}
\DeclareMathOperator{\tensor}{\otimes}
\DeclareMathOperator{\Tor}{Tor}

%
%
\DeclareMathOperator{\pnt}{\raise 0.5mm \hbox{\large\bf.}}
\DeclareMathOperator{\lpnt}{\hbox{\large\bf.}}

\newcommand{\la}{\langle}
\newcommand{\ra}{\rangle}

\newcommand{\s}{\; | \;}

\let\phi=\varphi
\newcommand{\eps}{\varepsilon}
%
%

\newtheorem{thm}{\bf Theorem}[section]
\newtheorem{lem}[thm]{\bf Lemma}
\newtheorem{cor}[thm]{\bf Corollary}
\newtheorem{prop}[thm]{\bf Proposition}
\newtheorem{conj}[thm]{\bf Conjecture}

\theoremstyle{definition}
\newtheorem{defn}[thm]{\bf Definition}

\newtheorem{rem}[thm]{\bf Remark}
\newtheorem{ex}[thm]{\bf Example}

%
%
\textwidth=16 cm
\textheight=24 cm
\topmargin=-1.5 cm
\oddsidemargin=0.5 cm
\evensidemargin=0.5 cm
\footskip=40 pt

%
%

\title{$\FI$- and $\FIO$-modules with varying coefficients}

\author[Uwe Nagel]{Uwe Nagel}
\address{Department of Mathematics, University of Kentucky, 715 Patterson Office Tower, Lexington, KY 40506-0027, USA}
\email{uwe.nagel@uky.edu}

\author{Tim R\"omer}
\address{Universit\"at Osnabr\"uck, Institut f\"ur Mathematik, 49069 Osnabr\"uck, Germany}
\email{troemer@uos.de}

\begin{document}

\begin{abstract}
We introduce $\FI$-algebras over a commutative ring $K$ and the category of $\FI$-modules over an $\FI$-algebra.
Such a module may be considered as a
family of invariant modules over compatible varying $K$-algebras. $\FI$-modules over $K$ correspond to the well studied constant coefficient case where every algebra equals $K$.
We show that a finitely generated $\FI$-module over a noetherian polynomial $\FI$-algebra is a noetherian module. This is established by introducing $\FIO$-modules. We prove that every submodule of a finitely generated free $\FIO$-module over a noetherian polynomial $\FIO$-algebra has a finite Gr\"obner basis. Applying our noetherianity results to a family of free resolutions, finite generation translates into stabilization of syzygies in any fixed homological degree. In particular, in the graded case
this gives uniformity results on degrees of minimal syzygies.
\end{abstract}

\thanks{The first author was partially supported by Simons Foundation grant \#317096.
He also is grateful to Daniel Erman and Greg Smith for organizing a very inspiring April 2016 Banff workshop on Free Resolutions, Representations, and Asymptotic Algebra.}

\maketitle



\section{Introduction}

Denote by $\FI$ the category whose objects are finite sets and whose morphisms are injections. An $\FI$-module over a commutative
ring $K$ with unity is a functor from $\FI$ to the category of $K$-modules.
$\FI$-modules over $K$ and its relatives provide a framework for studying a sequence of representations of symmetric and related groups
on finite-dimensional vector spaces of varying dimensions (see, e.g., \cite{CE, CEF, CEFN, PS, SS-14, SS-16}). Stabilization results are
derived as a consequence of finite generation of a suitable
$\FI$-module.
A cornerstone of the theory is that, if $K$ is noetherian, then a finitely generated $\FI$-module over $K$ is noetherian, that is, all its $\FI$-submodules are also finitely generated.
In a parallel development, originally motivated by questions in algebraic statistics, it was shown that
every ideal in a polynomial ring $K[X]$ in infinitely many variables $x_1, x_2,\ldots$ over a field $K$ that is invariant under the action of the symmetric group $\Sym (\N)$ is generated by finitely many $\Sym (\N)$-orbits (see \cite{AH, C, Draisma, HS} and also \cite{Draisma-factor, DK, SS-12b} for related results).

In this paper we introduce and utilize an extension of both
research strands by studying families of modules over varying rings.

More precisely, an $\FI$-algebra over a commutative ring $K$ is a functor $\Ab$ from $\FI$ to the category
of
commutative, associative, unital $K$-algebras with $\Ab (\emptyset) = K$. For example, we denote
by $\Xb$ the $\FI$-algebra with $\Xb_n = \Xb ([n]) = K[x_1,\ldots,x_n]$ and homomorphisms $\Xb (\eps)$
determined by $\eps (x_i) = x_{\eps (i)}$, where $\eps\colon [m] \to [n] = \{1,2,\ldots,n\}$ is
any morphism. Note that $K$ itself may be considered as the \emph{constant} $\FI$-algebra that maps
every finite set onto $K$. An $\FI$-module over an $\FI$-algebra $\Ab$ is a covariant functor $\M$ from
$\FI$ to the category of $K$-modules which, informally, is compatible with the $\FI$-algebra
structure of $\Ab$ (see Definition~\ref{def:OI-module} for details). It is finitely generated if there is a
finite subset that is not contained in any proper $\FI$-submodule. As in the classical case, $\M$ is
said to be noetherian if every $\FI$-submodule of $\M$ is finitely generated.

Given an $\FI$-module $\M$ over an $\FI$-algebra $\Ab$, there are natural colimits $\lim \M$ and
$K[\Ab]$ such that $\lim \M$ is a $K[\Ab]$-module. For example, if $\Ib$ is an ideal of $\Xb$, then
$\lim \Ib$ is a $\Sym (\N)$-invariant ideal of $K[X]$.

If $\M$ is noetherian, we show that $\lim \M$ is
$\Sym (\N)$-noetherian, that is, every $\Sym (\N)$-invariant submodule of $\lim \M$ is generated by
finitely many $\Sym (\N)$-orbits (see Theorem~\ref{thm:noeth-and-limits}). Since, as a special case of our results, $\Xb$ is a noetherian $\FI$-algebra over $K$, this implies in particular \cite[Theorems 1.1]{HS}, that is, $\Sym (\N)$-invariant ideals of $K[X]$ are generated by finitely many $\Sym (\N)$-orbits (see Corollary \ref{cor:limits-alg-noeth}).

In contrast to the category of $\FI$-modules over $K$, it is not true that every finitely generated
$\FI$-module over any $\FI$-algebra is noetherian. However, we prove that finitely generated
$\FI$-modules over (any finite tensor power of) $\Xb$ are noetherian
(see Theorem~\ref{thm:fg-gives-noeth-mod} and Corollary~\ref{cor:noeth-subalgebra} for a more
general result). Since the constant $\FI$-algebra $K$ is a quotient of $\Xb$, this also covers the
noetherianity result for $\FI$-modules over $K$ mentioned above.

As an application, we establish stabilization of $p$-syzygies for fixed $p$. More precisely, our
results yield that every finitely generated $\FI$-module $\M$ over $\Xb$ admits a free resolution
over $\Xb$, where each free module is again finitely generated. In particular, for every $p \in \N$, the
$p$-th syzygy module of $\M$ is a finitely generated $\FI$-module. This implies that there are finitely many \emph{master syzygies} whose orbits generate the $p$-th syzygy module of
every module $\M ([n])$ over $\Xb_n$ if $n$ is sufficiently large. In particular, for graded modules this shows that there
is a uniform upper bound for the degrees of minimal $p$-th syzygies of every module $\M ([n])$. Uniformity means that the bound is independent of $n$. However, examples show that this bound depends on the homological degree $p$ in general. Results of this kind appeared in \cite{CE, S-16, S-17, SS-14, S}; see the discussion in Remark \ref{rem:stabilizationetc}.

In order to establish the above results, we also introduce $\FIO$-modules over an $\FIO$-algebra. They are
defined analogously to their $\FI$-counterparts as functors from the category $\FIO$. The objects of $\FIO$
are totally ordered finite sets and its morphisms are order-preserving injective maps. Every $\FI$-module may be considered as an $\FIO$-module. The key technical advantage of $\FIO$-modules is that they are amenable to a theory of Gr\"obner bases. In fact, every finitely generated free $\FIO$-module over $\Xb$ has a finite Gr\"obner basis (see Theorem~\ref{thm:finite-G-basis}).

This paper is organized as follows. $\FIO$- and $\FI$-algebras are introduced in Section \ref{sec:fio-algebras}. In particular, we define polynomial algebras in this context.
Every polynomial algebra is generated by a single element, and an algebra is finitely generated if and only if it is a quotient of a finite tensor product of polynomial algebras (see Proposition~\ref{prop:fg-FOI-algebra}).

In Section~\ref{sec:fio-modules}, we formally define $\FI$- and $\FIO$-modules over a corresponding algebra $\Ab$. Over any $\FI$- or $\FIO$-algebra $\Ab$, we introduce a class of modules that are the building blocks of the free modules (see Definition~\ref{def:free}). A module over $\Ab$ is finitely generated if and only if it is a quotient of a finitely generated free module (see Proposition~\ref{prop:finite-gen}).

General properties of noetherian $\FI$-algebras and $\FI$-modules are discussed in Section~\ref{sec:noeth-gen}. In particular, we show there that a noetherian module has a finitely generated colimit.

Section~\ref{sec:orders} is devoted to a combinatorial topic. The goal is to establish that a certain partial order is a well-partial-order (see Proposition~\ref{prop:Higman-on-products}). This is a key ingredient of the central result in Section~\ref{sec:groebner}: Every finitely generated free $\FIO$-module over the polynomial $\FIO$-algebra analog of $\Xb$ has a finite Gr\"obner basis with respect to any monomial order (see Theorem~\ref{thm:finite-G-basis}). We use it to derive the mentioned results about noetherian modules. Moreover, we show that certain subalgebras of $\Xb$ and its $\FIO$-analog such as Veronese subalgebras are again noetherian and that finitely generated modules over these noetherian algebras are noetherian modules (see Proposition~\ref{prop:noeth-subalgebra}).

In Section~\ref{sec:syzygies}, we discuss free resolutions of $\FI$ and $\FIO$-modules. The mentioned stabilization results of $p$-syzygies are established in Theorems \ref{thm:stabilization-syz} and \ref{thm:stabilization}.

Finally, in Section~\ref{sec:koszul} we present an analog of the classical Koszul complex for
 $\FIO$-modules (see Proposition~\ref{prop:Koszul-exact}). It gives specific examples of resolutions that are used to establish the stabilization of syzygies in Section~\ref{sec:syzygies}.


\section{$\FI$- and $\FIO$-algebras}
\label{sec:fio-algebras}

Before introducing modules we define the algebras from which the coefficients will be drawn. Our exposition is influenced by the approach in \cite{CEF}.

\begin{defn}
Denote by $\FI$ the category whose objects are finite sets and whose morphisms are injections (see \cite{CEF} for more details).

The category $\FIO$ is the subcategory of $\FI$ whose objects are totally ordered finite sets and whose morphisms are order-preserving injective maps (see \cite{SS-14}).
\end{defn}

For an integer $n \ge 0$, we set $[n] = \{1,2,\ldots,n\}$. Thus, $[0] = \emptyset$. We denote by $\N$ and $\N_0$ the set of positive integers and non-negative integers, respectively.

\begin{rem}
The category $\FIO$ is equivalent to the category with objects $[n]$ for $n \in \N_0$ and morphisms being order-preserving injective maps $\eps\colon [m] \to [n]$. In particular, this implies $\eps (m) \ge m$.
\end{rem}

For later use we record the following observation. Its proof uses maps
\begin{align}
\label{eq:iota}
\iota_{m, n}\colon [m] \to [n], \ j \mapsto j,
\end{align}
where $m \le n$ are any positive integers. They are $\FIO$ morphisms.

\begin{lem}
\label{lem:decompose}
For any positive integers $m < n$ one has
\[
\Hom_{\FIO} ([m], [n]) = \Hom_{\FIO} ([m+1], [n]) \circ \Hom_{\FIO} ([m], [m+1])
\]
and
\[
\Hom_{\FI} ([m], [n]) = \Hom_{\FI} ([m+1], [n]) \circ \Hom_{\FI} ([m], [m+1]).
\]
\end{lem}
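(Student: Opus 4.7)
The plan is to show both equalities by proving each inclusion. The inclusion $\supseteq$ is immediate in both cases, since a composition of (order-preserving) injections is again an (order-preserving) injection, so the right-hand side is always contained in the left-hand side. The content of the lemma is the reverse inclusion, which amounts to constructing, for every $\eps \in \Hom([m],[n])$, a factorization $\eps = \beta \circ \alpha$ with $\alpha \in \Hom([m],[m+1])$ and $\beta \in \Hom([m+1],[n])$.

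For the $\FI$ case, I would simply take $\alpha = \iota_{m, m+1}$ from \eqref{eq:iota}. Since $|\eps([m])| = m < n$, the complement $[n] \setminus \eps([m])$ is nonempty, and I would choose any $k$ in it. Defining $\beta(j) = \eps(j)$ for $j \le m$ and $\beta(m+1) = k$ yields an injection $\beta\colon [m+1] \to [n]$ with $\beta \circ \iota_{m,m+1} = \eps$, which gives the factorization.

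The $\FIO$ case is slightly more delicate, since using $\iota_{m, m+1}$ as $\alpha$ forces the extra value $\beta(m+1)$ to be strictly larger than $\eps(m)$, which fails if $\eps(m) = n$. Instead, I would choose any element $y \in [n] \setminus \eps([m])$ (nonempty because $m < n$) and consider the $(m+1)$-element set $S = \eps([m]) \cup \{y\} \subseteq [n]$. Let $\beta\colon [m+1] \to [n]$ be the unique order-preserving injection with image $S$. Then $\beta^{-1}(\eps([m]))$ is an $m$-element subset of $[m+1]$, and the unique order-preserving bijection $\alpha\colon [m] \to \beta^{-1}(\eps([m])) \subseteq [m+1]$ satisfies $\beta \circ \alpha = \eps$. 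Explicitly, if $y$ is the $k$-th smallest element of $S$, then $\alpha$ is the order-preserving injection that omits $k$ from its image.

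The only potential obstacle is keeping track of order-preservation in the $\FIO$ case; this is handled by the freedom to insert $y$ at the appropriate position in $S$ rather than always appending it at the end. Once that insight is in place, both factorizations are essentially bookkeeping.
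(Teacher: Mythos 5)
Your proof is correct. The $\FI$ half coincides with the paper's argument: extend $\eps$ to $\widetilde{\eps}\colon [m+1]\to[n]$ by sending $m+1$ to any element of $[n]\setminus\im\eps$ and factor $\eps=\widetilde{\eps}\circ\iota_{m,m+1}$. For the $\FIO$ half the paper does not write out an argument at all; it simply refers to the proof of Proposition~4.6 of \cite{NR} with $\Inc_{m,n}$ replaced by $\Hom_{\FIO}([m],[n])$. You instead give a direct, self-contained construction: enlarge the image to $S=\eps([m])\cup\{y\}$, take $\beta$ the unique order-preserving injection $[m+1]\to[n]$ with image $S$, and $\alpha$ the unique order-preserving injection $[m]\to[m+1]$ with image $\beta^{-1}(\eps([m]))$; uniqueness of order-preserving injections with prescribed image then forces $\beta\circ\alpha=\eps$. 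This is a genuine (if small) improvement in self-containedness, and you correctly identified the one subtlety that makes the naive $\FI$-style extension fail in the ordered setting, namely that appending the new value after $\eps(m)$ is impossible when $\eps(m)=n$, so the extra element must be inserted at the right position rather than at the end. Both routes are sound; the paper's buys brevity via a citation, yours buys a complete argument readable without consulting \cite{NR}.
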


\begin{proof}
In both cases the right-hand side is obviously contained in the left-hand side. The reverse inclusion in the first case follows by the arguments for \cite[Proposition 4.6]{NR} by replacing $\Inc_{m, n}$ by $\Hom_{\FIO} ([m], [n])$. In the second case, consider any $\eps \in \Hom_{\FI} ([m], [n])$, and choose some $i \in [n] \setminus \im \eps$. Define $\widetilde{\eps}\colon [m+1] \to [n]$ by
\[
\widetilde{\eps} (j) = \begin{cases}
\eps (j) & \text{ if } j \in [m],\\
i & \text{ if } j = m+1.
\end{cases}
\]
Then we get $\eps = \widetilde{\eps} \circ \iota_{m, m+1}$, which concludes the proof.
\end{proof}

Let $K$ be a commutative ring with unity.
Denote by $K$-$\Alg$ the category of commutative, associative, unital $K$-algebras whose morphisms are $K$-algebra homomorphisms that map the identity of the domain onto the identity of the codomain.

\begin{defn}
\label{def:OI-algebra}
\
\begin{enumerate}
\item
An \emph{$\FIO$-algebra over $K$}
 is a covariant functor $\Ab$ from $\FIO$ to the category $K$-$\Alg$ with $\Ab({\emptyset}) = K$.
\item
An \emph{$\FI$-algebra over $K$} is defined analogously as a functor $\Ab$ from the category $\FI$ to $K$-$\Alg$ with $\Ab({\emptyset}) = K$.
\end{enumerate}
\end{defn}

Since $\FIO$ is a subcategory of $\FI$, any $\FI$-algebra may also be considered as an $\FIO$-algebra. We often will use the same symbol to denote both of these algebras.

For a finite set $S$, we write $\Ab_S$ for the $K$-algebra $\Ab(S)$, and we denote $\Ab_{[n]}$ by $\Ab_n$. Given a morphism $\eps\colon S \to T$, we often write $\eps^*\colon \Ab_S \to \Ab_T$ for the morphism $\Ab(\eps)$.

\begin{ex}
\label{exa:OI-alg}
\
\begin{enumerate}
\item

Fix an integer $c > 0$, and consider a polynomial ring
\[
K[X] := K[x_{i, j} \; | \; i \in [c],\ j \in \N].
\]
It naturally gives rise to an $\FI$-algebra as well as an $\FIO$-algebra $\Pb$ as follows: For a subset $S \subset \N$, set $\Pb_S = K[x_{i, j} \; | \; i \in [c],\ j \in S]$.
Given an $\FI$ or $\FIO$ morphism $\eps\colon S \to T$ of subsets of $\N$, define
$$
\eps^*\colon \Pb_S \to \Pb_T \text{ by }
\eps^* (x_{i, j}) = x_{i, \eps (j)}.
$$

Moreover, an $\Inc$-invariant filtration $\Ic = (I_n)_{n \in \N}$ of ideals
$I _n\subset \Pb_n$ (see, e.g., \cite{HS} or \cite[Definition 5.1]{NR}) corresponds to an $\FIO$-algebra $\Ab$ over $K$, where $\Ab_n = \Pb_n/I_n$.

\item Using the maps $\iota_{m, n}$ with $m \le n$ (see \eqref{eq:iota}), we can form a
 direct system $(\Pb_n, \iota^*_{m, n})$. Then
 one sees that, as $K$-algebras,
$$
K[X] \cong {\displaystyle \lim_{\longrightarrow}}\, \Pb_n.
$$
\end{enumerate}
\end{ex}

The last construction can be considerably generalized.

\begin{defn}
\label{def:limit}
Given any $\FIO$-algebra $\Ab$ over $K$, define its \emph{colimit}
$$
K[\Ab] = {\displaystyle \lim_{\longrightarrow}}\, \Ab_n \in K\text{-}\Alg
$$
using the direct system $(\Ab_n, \iota^*_{m, n})$ with maps $\iota_{m,n}$ as introduced in \eqref{eq:iota}.

Similarly, one defines the colimit $K[\Ab]$ for an $\FI$-algebra $\Ab$.
\end{defn}

\begin{rem}
\
\begin{enumerate}
\item
In the case $\Ab = \Pb$, the above colimit has been studied, for example, in \cite{Draisma, HS, NR} by using the monoid of increasing maps
\[
\Inc = \{\pi\colon \N \to \N \; | \; \pi (i) < \pi (i+1) \text{ for all } i \ge 1\}.
\]
The colimit $K[\Pb] \cong K[X]$ naturally admits an $\Inc$-action induced by $\pi \cdot x_{i, j} = x_{i, \pi (j)}$ for any $\pi \in \Inc$. We will see below that every colimit $K[\Ab]$ admits an $\Inc$-action that is compatible with the $\FIO$-algebra structure of $\Ab$. This close relation is one of the main motivations of our approach.

\item
Similarly, considering $\Pb$ as an $\FI$-algebra, there is compatible $\Sym(\infty)$-action on its colimit $K[X]$. It is induced by $\pi \cdot x_{i, j} = x_{i, \pi (j)}$ with $\pi \in \Sym(\infty)$. Here $\Sym(\infty)$ denotes the group $\bigcup_{n\in \N} \Sym (n)$, where the symmetric group $\Sym (n)$ on $n$ letters is naturally embedded into $\Sym (n+1)$ as the stabilizer of $\{n+1\}$.

\end{enumerate}
\end{rem}

For introducing an $\Inc$-action or an $\Sym (\infty)$-action on arbitrary colimits, we need some further notation.

\begin{defn}
\label{def:induced-maps-by-pi}
\
\begin{enumerate}
\item
Given a map $\pi \in \Inc$, denote by $\eps_{\pi, m}$ the map
\[
\eps_{\pi, m}\colon [m] \to [\pi (m)], \ j \mapsto \pi (j).
\]
It is order-preserving and injective.
\item
For $\pi \in \Sym (\infty)$ and $m \in \N$, let $l_m$ be the least integer such that $\pi ([m]) \subseteq [l_m]$. Define a map
\[
\eps_{\pi, m}\colon [m] \to [l_m], \ j \mapsto \pi (j).
\]
It is injective.
\end{enumerate}
\end{defn}

One easily checks the following identities.

\begin{lem}
\label{lem:map-indentities}
\
\begin{enumerate}
\item For every $\pi \in \Inc$ and any positive integers $m \le k$, one has
\[
\iota_{\pi (m), \pi (k)} \circ \eps_{\pi, m} = \eps_{\pi, k} \circ \iota_{m, k}.
\]

\item
For every $\pi \in \Sym (\infty)$ and any positive integers $m \le k$, one has
\[
\iota_{l_m, l_k} \circ \eps_{\pi, m} = \eps_{\pi, k} \circ \iota_{m, k}.
\]
\end{enumerate}
\end{lem}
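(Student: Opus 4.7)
The plan is to verify both identities pointwise, checking that the two compositions agree on each element $j \in [m]$. Since both sides of each identity are maps with domain $[m]$, it suffices to unwind the definitions of $\iota_{\cdot,\cdot}$ and $\eps_{\pi,\cdot}$ and compare the images of a generic $j$.

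For part (i), I would fix $\pi \in \Inc$ and integers $m \le k$, and take $j \in [m]$. On the left-hand side, $\eps_{\pi,m}(j) = \pi(j)$ by definition, and then $\iota_{\pi(m),\pi(k)}$ is simply inclusion, so the composition sends $j$ to $\pi(j)$. On the right-hand side, $\iota_{m,k}(j) = j$, and $\eps_{\pi,k}(j) = \pi(j)$. Both sides yield $\pi(j)$. A small preliminary check is that the maps are well-defined: $\pi(m) \le \pi(k)$ because $\pi$ is strictly increasing, so $\iota_{\pi(m),\pi(k)}$ makes sense, and $\pi(j) \le \pi(m)$ for $j \in [m]$, so $\pi(j) \in [\pi(m)]$ as needed.

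Part (ii) is handled by the same calculation with $\pi \in \Sym(\infty)$ in place of an increasing map. For $j \in [m]$, the left-hand composition sends $j$ to $\pi(j)$ (using that $\pi(j) \in [l_m]$ by the defining property of $l_m$), and the right-hand composition also sends $j$ to $\pi(j)$. To see that the map $\iota_{l_m, l_k}$ is defined, I would note that $m \le k$ implies $\pi([m]) \subseteq \pi([k]) \subseteq [l_k]$, so by minimality of $l_m$ we have $l_m \le l_k$.

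I do not anticipate any real obstacle: the statement is a bookkeeping identity, and the only thing to be slightly careful about is the well-definedness of the inclusion maps $\iota_{\cdot,\cdot}$ in both parts, which follows from the monotonicity of $\pi$ in (i) and the minimality defining $l_m$ in (ii).
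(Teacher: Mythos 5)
Your proof is correct and is exactly the kind of direct verification the paper has in mind; the paper in fact offers no written proof, prefacing the lemma only with ``One easily checks the following identities.'' Your attention to well-definedness of $\iota_{\pi(m),\pi(k)}$ and $\iota_{l_m,l_k}$ (via monotonicity of $\pi$ and minimality of $l_m$, respectively) is the only nontrivial point, and you handle it correctly.
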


For any $a \in \Ab_m$, denote by $[a]$ its class in the colimit $K[\Ab] ={\displaystyle \lim_{\longrightarrow}}\, \Ab_n$.

\begin{prop}
\label{prop:action-on-limits}
\
\begin{enumerate}
\item Let $\Ab$ be an $\FIO$-algebra. Then there is an $\Inc$-action on $K[\Ab]$ defined by
 \[
\pi \cdot [a] = [\eps_{\pi, m}^* (a)],
\]
where $a \in \Ab_m$ and $\pi \in \Inc$.
\item
 Let $\Ab$ be an $\FI$-algebra. Then there is a $\Sym (\infty)$-action on $K[\Ab]$ defined by
 \[
\pi \cdot [a] = [\eps_{\pi, m}^* (a)],
\]
where $a \in \Ab_m$ and $\pi \in \Sym (\infty)$.
 \end{enumerate}
\end{prop}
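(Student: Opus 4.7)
The plan is to verify, in order, (a) well-definedness of the formula on colimit classes, (b) that each $\pi$ acts as a $K$-algebra endomorphism, and (c) the action axioms $(\pi\sigma)\cdot[a]=\pi\cdot(\sigma\cdot[a])$ and $\id\cdot[a]=[a]$. I will handle the $\FIO$-case in detail; the $\FI$-case is essentially identical after replacing $\pi(m)$ by $l_m$.

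For well-definedness, suppose $[a]=[b]$ with $a\in\Ab_m$, $b\in\Ab_k$, and $m\le k$. By the definition of a direct colimit there exists $n\ge k$ with $\iota_{m,n}^*(a)=\iota_{k,n}^*(b)$ in $\Ab_n$. The identity $\iota_{\pi(m),\pi(n)}\circ\eps_{\pi,m}=\eps_{\pi,n}\circ\iota_{m,n}$ of Lemma~\ref{lem:map-indentities}(i), together with the functoriality of $\Ab$, gives
\[
\iota_{\pi(m),\pi(n)}^*\bigl(\eps_{\pi,m}^*(a)\bigr)=\eps_{\pi,n}^*\bigl(\iota_{m,n}^*(a)\bigr)=\eps_{\pi,n}^*\bigl(\iota_{k,n}^*(b)\bigr)=\iota_{\pi(k),\pi(n)}^*\bigl(\eps_{\pi,k}^*(b)\bigr),
\]
so $[\eps_{\pi,m}^*(a)]=[\eps_{\pi,k}^*(b)]$ in $K[\Ab]$. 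Hence $\pi\cdot[\,\pnt\,]$ is a well-defined map $K[\Ab]\to K[\Ab]$. That this map is a $K$-algebra homomorphism is then automatic: addition and multiplication in the colimit are computed by first lifting two classes $[a],[b]$ to a common level $\Ab_n$ via the transition maps $\iota_{\cdot,n}^*$, and each $\eps_{\pi,n}^*$ is by assumption a $K$-algebra homomorphism, so it respects sums, products, and the image of $K$.

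For the action axioms, the key is the compositional identity $\eps_{\pi\sigma,m}=\eps_{\pi,\sigma(m)}\circ\eps_{\sigma,m}$ of order-preserving injections $[m]\to[\sigma(m)]\to[\pi\sigma(m)]$, which one checks pointwise: both maps send $j\in[m]$ to $\pi(\sigma(j))$. Applying the functor $\Ab$ yields $\eps_{\pi\sigma,m}^*=\eps_{\pi,\sigma(m)}^*\circ\eps_{\sigma,m}^*$, and evaluating on $a\in\Ab_m$ gives
\[
(\pi\sigma)\cdot[a]=[\eps_{\pi\sigma,m}^*(a)]=[\eps_{\pi,\sigma(m)}^*(\eps_{\sigma,m}^*(a))]=\pi\cdot[\eps_{\sigma,m}^*(a)]=\pi\cdot(\sigma\cdot[a]).
\]
Finally, for $\pi=\id_\N\in\Inc$ we have $\eps_{\id,m}=\id_{[m]}$, whence $\eps_{\id,m}^*=\id_{\Ab_m}$ and $\id\cdot[a]=[a]$.

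For part (ii), one runs the same argument with $\eps_{\pi,m}\colon[m]\to[l_m]$, using Lemma~\ref{lem:map-indentities}(ii) for well-definedness. The only subtlety, and the one slightly non-routine point, is the composition identity for $\pi\sigma\in\Sym(\infty)$: since $\pi\sigma([m])\subseteq\pi([l_m^\sigma])\subseteq[l_{l_m^\sigma}^\pi]$, one has $l_m^{\pi\sigma}\le l_{l_m^\sigma}^\pi$, and the pointwise computation shows $\eps_{\pi,l_m^\sigma}\circ\eps_{\sigma,m}=\iota_{l_m^{\pi\sigma},\,l_{l_m^\sigma}^\pi}\circ\eps_{\pi\sigma,m}$. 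After applying $\Ab$ and passing to classes in $K[\Ab]$ the transition map $\iota^*$ disappears, yielding $(\pi\sigma)\cdot[a]=\pi\cdot(\sigma\cdot[a])$, and the identity of $\Sym(\infty)$ acts trivially as before. This bookkeeping with $l_m$ is the only step requiring care; everything else is a direct consequence of the functoriality of $\Ab$ and the universal property of the colimit.
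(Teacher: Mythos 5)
Your proposal is correct and follows essentially the same route as the paper: well-definedness via Lemma~\ref{lem:map-indentities}, and the action axiom via the composition identity $\eps_{\tilde\pi\circ\pi,m}=\eps_{\tilde\pi,\pi(m)}\circ\eps_{\pi,m}$ (resp.\ its $\Sym(\infty)$-analogue with the extra transition map $\iota^*$, which vanishes in the colimit). You merely spell out details the paper leaves implicit, namely the $l_m$-bookkeeping in case (ii) and the verification that each $\pi$ acts by $K$-algebra homomorphisms and that the identity acts trivially.
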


\begin{proof}
(i) First we show that the stated assignment gives a well-defined map.
Consider any $a \in \Ab_m$ and $b \in \Ab_n$ with $[a] = [b]$, i.e., there is an integer $k \ge m, n$ such that $\iota_{m, k}^* (a) = \iota_{n, k}^* (b)$. Using Lemma~\ref{lem:map-indentities}(i), we get
\[
\iota_{\pi (m), \pi (k)} ^* (\eps_{\pi, m}^* (a)) = \eps_{\pi, k}^* ( \iota_{m, k}^* (a)) = \eps_{\pi, k}^* (\iota_{n, k}^* (b)) = \iota_{\pi (n), \pi (k)} ^* (\eps_{\pi, n}^* (b)),
\]
which shows $\pi \cdot [a] = \pi \cdot [b]$. The map yields an action on $K[\Ab]$ because $\eps_{\tilde{\pi} \circ \pi, m} = \eps_{\tilde{\pi}, \pi (m)} \circ \eps_{\pi, m}$.

(ii) is shown similarly using Lemma~\ref{lem:map-indentities}(ii).
\end{proof}

For comparing structures, we need the following technical result.

\begin{lem}
\label{lem:compatible-structures}
\
\begin{enumerate}
\item
Let $\Ab$ be an $\FIO$-algebra. Given any $\eps \in \Hom_{\FIO} ([m], [n])$,
there is some $\pi_{\eps} \in \Inc$ such that
\[
[\eps^* (a)] = \pi_{\eps} \cdot [a] \quad \text{ for every } a \in \Ab_m.
\]
\item
Let $\Ab$ be an $\FI$-algebra. Given any $\eps \in \Hom_{\FI} ([m], [n])$, there is some $\pi_{\eps} \in \Sym (n)$ such that
\[
[\eps^* (a)] = \pi_{\eps} \cdot [a] \quad \text{ for every } a \in \Ab_m.
\]

\end{enumerate}
\end{lem}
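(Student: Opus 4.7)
The plan is to construct, in each case, a map $\pi_{\eps}$ that extends $\eps$ in a controlled way, and then use the definitions of the actions together with the colimit identifications to reduce the claim to a routine factorization identity.

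For (i), given $\eps \in \Hom_{\FIO}([m], [n])$, I would define $\pi_{\eps} \in \Inc$ by setting $\pi_{\eps}(j) = \eps(j)$ for $j \in [m]$ and $\pi_{\eps}(m + k) = n + k$ for $k \ge 1$. Since $\eps$ is order-preserving, $\pi_{\eps}(m) = \eps(m) \le n < n + 1 = \pi_{\eps}(m + 1)$, so $\pi_{\eps}$ is strictly increasing and thus lies in $\Inc$. By construction, $\pi_{\eps}(m) = \eps(m)$, and the associated map $\eps_{\pi_{\eps}, m} \colon [m] \to [\eps(m)]$ from Definition~\ref{def:induced-maps-by-pi}(i) sends $j$ to $\eps(j)$, so it agrees with $\eps$ up to restricting the codomain. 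Hence the original $\eps$ factors as
\[
\eps = \iota_{\eps(m), n} \circ \eps_{\pi_{\eps}, m},
\]
which gives $\eps^* (a) = \iota_{\eps(m), n}^* \bigl( \eps_{\pi_{\eps}, m}^* (a) \bigr)$ for every $a \in \Ab_m$. Passing to the colimit, classes agree along the $\iota_{\bullet, \bullet}^*$, and so
\[
[\eps^* (a)] = \bigl[ \eps_{\pi_{\eps}, m}^* (a) \bigr] = \pi_{\eps} \cdot [a]
\]
by the definition of the $\Inc$-action in Proposition~\ref{prop:action-on-limits}(i).

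For (ii), given $\eps \in \Hom_{\FI}([m], [n])$, I would simply extend the injection $\eps \colon [m] \hookrightarrow [n]$ to any bijection $\pi_{\eps} \colon [n] \to [n]$, viewed as an element of $\Sym (n) \subset \Sym (\infty)$. With $l_m$ as in Definition~\ref{def:induced-maps-by-pi}(ii), we have $l_m = \max \pi_{\eps}([m]) = \max \eps([m]) \le n$, and the map $\eps_{\pi_{\eps}, m} \colon [m] \to [l_m]$ sends $j$ to $\pi_{\eps}(j) = \eps(j)$. Thus $\eps = \iota_{l_m, n} \circ \eps_{\pi_{\eps}, m}$, and the same colimit argument as in (i) yields $[\eps^*(a)] = \pi_{\eps} \cdot [a]$.

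There is no real obstacle here; the only minor subtlety is checking that the explicitly defined $\pi_{\eps}$ in (i) is indeed strictly increasing, which is forced by having enough room after position $m$ because $n \ge \eps(m)$. Once the correct $\pi_{\eps}$ is chosen, the identities are immediate from unwinding the definitions and using that the colimit identifies elements along the inclusions $\iota_{\bullet, \bullet}$.
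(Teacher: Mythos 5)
Your proof is correct and follows essentially the same route as the paper: extend $\eps$ to a suitable $\pi_{\eps}$ (in $\Inc$, resp.\ $\Sym(n)$), check the factorization $\eps = \iota_{\bullet, n} \circ \eps_{\pi_{\eps}, m}$, and conclude in the colimit. The only (immaterial) difference is your choice of extension in (i), $\pi_{\eps}(m+k) = n+k$, where the paper uses $\eps(m) + k$; both lie in $\Inc$ and yield the same $\eps_{\pi_{\eps}, m}$.
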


\begin{proof}
(i) Define a map $\pi_\eps \in \Inc$ by
\begin{align}
\label{eq:induced-by-eps}
\pi_\eps (j) =
\begin{cases}
\eps(j)&\text{if } j\in [m],\\
\eps(m)+ j-m & \text{if } j>m.
\end{cases}
\end{align}
Using Definition~\ref{def:induced-maps-by-pi}(i) and $n \ge \eps (m)$, one checks that $\eps = \iota_{\eps (m), n} \circ \eps_{\pi_{\eps}, m}$. Thus, we get $\pi_{\eps} \cdot [a] = [ \eps_{\pi_{\eps}, m}^* (a)] = [\eps^* (a)]$, as desired.

(ii) Choose $\pi_{\eps} \in \Sym (n)$ with $\pi_{\eps} (j) = \eps (j)$ for every $j \in [m]$. Using Definition~\ref{def:induced-maps-by-pi}(ii) and $n \ge l_m$, one checks that $\eps = \iota_{l_m, n} \circ \eps_{\pi_{\eps}, m}$. Now the claim follows as in (i).
\end{proof}

It follows immediately from Lemma~\ref{lem:compatible-structures} that the structure of $\Ab$ and the above action on its colimit $K[\Ab]$ are compatible in the following sense.

\begin{cor}
Let $\Ab$ be an $\FIO$-algebra (or an $\FI$-algebra, respectively).
\begin{enumerate}
\item Given any $\pi \in \Inc$ (or $\pi \in \Sym (\infty)$, respectively) and any $m \in \N$, one has
\[
\pi \cdot [a] = \pi_{\eps_{\pi}} \cdot [a] \quad \text{ for every } \pi_{\eps} \text{ as in Lemma~\ref{lem:compatible-structures} and every } a \in \Ab_m.
\]

\item Given any $\eps \in \Hom_{\FIO} ([m], [n])$ (or $\eps \in \Hom_{\FI} ([m], [n])$), one has
\[
[\eps^* (a)] = [\eps_{\pi_{\eps}}^* (a)] \quad \text{ for every } \pi_{\eps} \text{ as in Lemma~\ref{lem:compatible-structures} and every } a \in \Ab_m.
\]
\end{enumerate}

\end{cor}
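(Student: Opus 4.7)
The plan is to observe that this corollary is essentially a bookkeeping exercise that combines the definition of the $\Inc$-action (respectively $\Sym(\infty)$-action) from Proposition~\ref{prop:action-on-limits} with the identification in Lemma~\ref{lem:compatible-structures}. No new constructions are required; the proof should be short and symmetric between the $\FIO$ and $\FI$ cases.

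For part (i) in the $\FIO$-case, I would start from $\pi \in \Inc$ and $a \in \Ab_m$ and unfold the definition of the action to obtain $\pi \cdot [a] = [\eps_{\pi, m}^*(a)]$. Since $\eps_{\pi, m}$ is an $\FIO$-morphism from $[m]$ to $[\pi(m)]$, Lemma~\ref{lem:compatible-structures}(i) supplies some $\pi_{\eps_{\pi,m}} \in \Inc$ with $[\eps_{\pi,m}^*(a)] = \pi_{\eps_{\pi,m}} \cdot [a]$. Chaining these two equalities yields exactly the claimed identity $\pi \cdot [a] = \pi_{\eps_{\pi}} \cdot [a]$. The $\FI$-case is handled by the analogous substitution, invoking Proposition~\ref{prop:action-on-limits}(ii) and Lemma~\ref{lem:compatible-structures}(ii).

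For part (ii), the argument proceeds in the opposite direction. Given $\eps \in \Hom_{\FIO}([m],[n])$ and $a \in \Ab_m$, Lemma~\ref{lem:compatible-structures}(i) yields $\pi_\eps \in \Inc$ with $[\eps^*(a)] = \pi_\eps \cdot [a]$, and the defining formula $\pi_\eps \cdot [a] = [\eps_{\pi_\eps, m}^*(a)]$ from Proposition~\ref{prop:action-on-limits}(i) rewrites the right-hand side as $[\eps_{\pi_\eps}^*(a)]$. Composing these two equalities gives the desired statement. Again, the $\FI$-case proceeds identically with the $\Sym(\infty)$-version of both results.

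There is no real obstacle: the only point that requires a moment of care is matching the notational conventions. Specifically, one must remember that $\pi_\eps$ in the statement is not canonical but is one of the elements promised by Lemma~\ref{lem:compatible-structures}, and that $\eps_{\pi, m}$ depends on the source $[m]$ (so in part (i) one applies Lemma~\ref{lem:compatible-structures} to the specific morphism $\eps_{\pi,m}$ rather than to $\pi$ itself). Once this bookkeeping is in place, both equalities reduce to the two-step chain described above, and there is nothing more to verify.
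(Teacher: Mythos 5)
Your proposal is correct and matches the paper's intent exactly: the paper gives no separate argument, stating that the corollary ``follows immediately'' from Lemma~\ref{lem:compatible-structures} together with the definition of the action in Proposition~\ref{prop:action-on-limits}, which is precisely the two-step chain you write out (applying the lemma to $\eps_{\pi,m}$ in (i), and the action formula to $\pi_{\eps}$ in (ii)). Nothing further is needed.
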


We now discuss finite generation of algebras.
If $\Ab$ is an $\FIO$- or an $\FI$-algebra over $K$, then a \emph{subalgebra} $\Bb$ of $\Ab$ is naturally defined by identifying subalgebras $\Bb_S$ of $\Ab_S$ for each (totally ordered) finite set $S$ such that
for $\eps^*\colon \Ab_S \to \Ab_T$ we have $\eps^*(\Bb_S)\subseteq \Bb_T$
for any morphism $\eps\colon S \to T$.

\begin{defn}
Let $\Ab$ be an $\FIO$-algebra (or $\FI$-algebra over $K$, respectively).
\begin{enumerate}
\item
$\Ab$ is called \emph{finitely generated}, if there exists
a finite subset $G \subset \coprod_{n\geq 0} \Ab_n$
which is not contained in any proper subalgebra of $\Ab$.

\item
$\Ab$ is called \emph{generated in degrees $\leq d$}
(resp.\ \emph{generated in degree $d$})
if there exists a set $G \subseteq \coprod_{0\leq n\leq d} \Ab_n$
(resp.\ $G \subseteq \Ab_d$)
which is not contained in any proper subalgebra of $\Ab$.
\end{enumerate}

In both cases, $G$ is called a \emph{generating set} of $\Ab$.
\end{defn}

\begin{rem}
Let $\Ab$ be an $\FIO$-algebra and consider a finite subset $G=\{a_1,\dots,a_k\}$, where $a_i\in \Ab_{n_i}$
for $i=1,\dots,k$. Then $\Ab$ is finitely generated by $G$ if and only if one has, for every totally ordered finite set $T$,
\[
\Ab_T=K\bigl[\eps^*(a_i): \eps\in \Hom_{\FIO} ([n_i], T)\bigr].
\]
The analogous statement is true for an $\FI$-algebra.
\end{rem}
For the following observation we use the notation
$$
\Ab_{n} \cap H:= \phi_n^{-1}(H)
\text{ for a subset } H \subseteq K[\Ab],
$$
where $\phi_n$ is the natural homomorphism
from $\Ab_n$ to the colimit $K[\Ab]$ as introduced in Definition~\ref{def:limit}.

\begin{prop}
\label{prop:compare-gen}
Let $\Ab$ be an $\FIO$-algebra (or $\FI$-algebra over $K$, respectively).
Let $\Pi=\Inc$ (or $\Pi=\Sym(\infty)$, respectively).

If $\Ab$ is finitely generated (in degrees $\leq d$), then
$K[\Ab]$ is a finitely generated $K$-algebra up to $\Pi$-action, i.e.,
it is generated as a $K$-algebra by the $\Pi$-orbits of finitely many elements
which have representatives in $\coprod_{n \le d} \Ab_n \cap K[\Ab]$.
\end{prop}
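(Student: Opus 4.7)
The plan is to take a finite generating set of $\Ab$ and show that its images in the colimit, together with their $\Pi$-orbits, generate $K[\Ab]$ as a $K$-algebra. The argument is essentially a bookkeeping one, tying together finite generation of $\Ab$ (which is a statement about every $\Ab_T$ being generated under the structural maps $\eps^*$) with the action of $\Pi$ on the colimit (which was constructed in Proposition~\ref{prop:action-on-limits} and linked back to arbitrary $\eps^*$ via Lemma~\ref{lem:compatible-structures}).

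First I would fix a finite generating set $G = \{a_1,\dots,a_k\}$ of $\Ab$ with $a_i \in \Ab_{n_i}$; under the ``degrees $\le d$'' hypothesis, we may assume $n_i \le d$ for all $i$. I would let $G' = \{[a_1],\dots,[a_k]\} \subseteq K[\Ab]$ denote the images under the structural maps $\phi_{n_i}\colon \Ab_{n_i} \to K[\Ab]$, so by construction $[a_i]$ has a representative in $\Ab_{n_i} \cap K[\Ab]$ with $n_i \le d$. I then need to show that the sub-$K$-algebra of $K[\Ab]$ generated by $\bigcup_{\pi \in \Pi} \pi \cdot G'$ exhausts $K[\Ab]$.

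Since $K[\Ab] = \bigcup_n \phi_n(\Ab_n)$ by definition of the colimit, it suffices to show that every $\phi_n(\Ab_n)$ lies in this subalgebra. By the characterization of finite generation recalled in the preceding remark, for every $n$ we have
\[
\Ab_n = K\bigl[\eps^*(a_i) : i \in [k],\; \eps \in \Hom_{\Cc}([n_i],[n])\bigr],
\]
where $\Cc = \FIO$ or $\FI$ according to the case. Applying the $K$-algebra homomorphism $\phi_n$, we obtain that $\phi_n(\Ab_n)$ is the $K$-subalgebra of $K[\Ab]$ generated by the elements $[\eps^*(a_i)] = \phi_n(\eps^*(a_i))$ over all such $\eps$ and $i$. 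By Lemma~\ref{lem:compatible-structures}, for each such $\eps$ there is some $\pi_\eps \in \Pi$ (in $\Inc$ or in $\Sym(n) \subseteq \Sym(\infty)$) with $[\eps^*(a_i)] = \pi_\eps \cdot [a_i]$, so each generator $[\eps^*(a_i)]$ lies in $\bigcup_{\pi \in \Pi} \pi \cdot G'$. Thus $\phi_n(\Ab_n)$ is contained in the $K$-subalgebra generated by the $\Pi$-orbits of $G'$, which finishes the proof.

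There is no serious obstacle here; the only thing to be careful about is the bookkeeping: the generating identity for $\Ab_n$ uses \emph{all} $\Cc$-morphisms $\eps\colon [n_i] \to [n]$, so one really needs Lemma~\ref{lem:compatible-structures} to know that each such $\eps^*$ is tracked, after passing to the colimit, by a single element of $\Pi$ acting on $[a_i]$. The $\FI$ case uses the version with $\pi_\eps \in \Sym(n) \subseteq \Sym(\infty)$, and the $\FIO$ case uses the version with $\pi_\eps \in \Inc$; both produce the same conclusion and the proof is uniform.
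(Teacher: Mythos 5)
Your argument is correct and follows the same route as the paper's proof: pick a finite generating set $\{a_1,\dots,a_k\}$ of $\Ab$, use the remark characterizing finite generation to express any element of $\Ab_n$ as a polynomial in the $\eps^*(a_i)$, pass to the colimit, and invoke Lemma~\ref{lem:compatible-structures} to replace each $[\eps^*(a_i)]$ by $\pi_\eps\cdot[a_i]$ for a suitable $\pi_\eps\in\Pi$. The only cosmetic difference is that you phrase the conclusion as containment of each $\phi_n(\Ab_n)$ in the subalgebra generated by the $\Pi$-orbits, while the paper verifies it element by element; these are the same argument.
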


\begin{proof}
We prove this for an $\FIO$-algebra $\Ab$.
For an $\FI$-algebra the argument is similar.
Assume $\Ab$ is generated by $G=\{a_1,\dots,a_k\}$ where $a_i\in \Ab_{n_i}$
for $i=1,\dots,k$. Thus, for every totally ordered finite set $T$, we have
$$
\Ab_T=
K\bigl[\eps^*(a_i): \eps\in \Hom_{\FIO} ([n_i], T)\bigr].
$$
Consider any $[a]\in K[\Ab]$, where $a\in \Ab_n$.
Then $a$ can be written as a polynomial
in $\eps^*(a_i)$ with coefficients in $K$ where $\eps\in \Hom_{\FIO} ([n_i], [n])$.
For any fixed map $\eps\in \Hom_{\FIO} ([n_i], [n])$, there is a map $\pi_\eps \in \Inc$ such that $\pi_\eps \cdot [a_i] = [\eps^*(a_i)]$ (see Lemma~\ref{lem:compatible-structures}).
It follows that $[a]$ is a polynomial
in certain elements of the $\Inc$-orbits of the $[a_i]$ with coefficients in $K$.
Thus, $K[\Ab]$ is a finitely generated $K$-algebra up to $\Inc$-action, as desired.

The arguments also imply the additional statement about degrees.
\end{proof}

\begin{rem}
The converse fails in general. Consider, for example, the $\FIO$-algebra $\Ab$, where $\Ab_S = K[X]$ (see Example~\ref{exa:OI-alg}) if $S$ is a totally ordered finite set with $|S| = 1$, $\Ab_S = K$ if $|S| \neq 1$, and with obvious maps. Then $\Ab$ is not finitely generated, but $K[\Ab] \cong K$ is.

It would be interesting to find additional conditions which imply that the converse of Proposition~\ref{prop:compare-gen} is true.
\end{rem}

We now introduce an important class of algebras.

\begin{defn}
Let $d \ge 0$ be an integer.
\begin{enumerate}
\item
Define a functor $\XO{d}\colon \FIO \to K$-$\Alg$ by letting
\[
\XO{d}_S = K\bigl[x_{\pi} \; : \; \pi \in \Hom_{\FIO} ([d], S)\bigr]
\]
be the polynomial ring over $K$ with variables $x_{\pi}$,
and, for $\eps \in \Hom_{\FIO} (S, T)$, by defining
 \[
\XO{d} (\eps)\colon \XO{d}_S \to \XO{d}_T
 \]
as the $K$-algebra homomorphism given by mapping $x_{\pi}$ onto $x_{\eps \circ \pi}$.

A \emph{polynomial $\FIO$-algebra over $K$} is an $\FIO$-algebra that is isomorphic to a tensor product
 $\Xb = \bigotimes_{\lambda \in \Lambda} \XO{d_{\lambda}}$, where each $\Xb_S$ is a tensor product of rings
 $\XO{d_{\lambda}}_S$ over $K$.
\item
Ignoring orders, we similarly define an $\FI$-algebra $\XI{d}$ over $K$ and a \emph{polynomial $\FI$-algebra over $K$}.
\end{enumerate}
\end{defn}

\begin{rem}
\label{rem:poly-alg}
\
\begin{enumerate}
\item
Note that, for each $d \ge 1$, the algebra $\XO{d}$ as well as $\XI{d}$ is generated in degree $d$ by the variable $x_{\id_{[d]}}$ (see Lemma~\ref{lem:decompose}). For $d = 0$, these algebras are generated in degree zero by the identity of $K$. In particular, $\XO{0}_n = \XI{0}_n = K$ for every $n$.
\item
Identifying $\eps \in \Hom_{\FIO} ([d], S)$ with its image $s_1 < s_2 < \cdots < s_d$ in $S$, we get, for example, $\XO{1}_S = K[x_i \; : \; i \in S]$ and $\XO{2}_S = K[x_{i, j} \; : \; i, j \in S, \ i < j]$.
Thus we obtain for the colimits, as $K$-algebras,
\[
K[\XO{1}] \cong K[x_i \; : \; i \in \N] \cong K[\XI{1}]
\]
and
\[
K[\XO{2}] \cong K[x_{i, j}\; : \; i, j \in \N, \ i < j].
\]
In contrast, $\XI{2}_S = K[x_{i, j} \; : \; i, j \in S, \ i \neq j]$ and
$
K[\XI{2}] \cong K[x_{i, j}\; : \; i, j \in \N, \ i \neq j].
$
\item
Observe that $(\XO{1})^{\otimes c}$ is isomorphic to the $\FIO$-algebra $\Pb$ considered in Example~\ref{exa:OI-alg}.
\item
Notice that the $\FIO$-algebra induced by $\XI{d}$ is isomorphic to $\XO{d}$ if and only if $d \in \{0, 1\}$. Furthermore, the algebras $\XI{d}_n$ and $\XO{d}_n$ are isomorphic for every integer $n \ge 0$ if and only if $d \in \{0, 1\}$.
\end{enumerate}
\end{rem}

\begin{prop}
\label{prop:fg-FOI-algebra}
Let $\Ab$ be an $\FIO$-algebra over $K$. The following statements are equivalent:
\begin{enumerate}
\item $\Ab$ is finitely generated (in degrees $\le d$);

\item there is a surjective natural transformation $\XO{d_1} \otimes_K \cdots \otimes_K \XO{d_k} \to \Ab$ for some integers $d_1,\ldots,d_k \ge 0$ (with all $d_i \le d$).
\end{enumerate}

The analogous equivalence is true for every $\FI$-algebra over $K$.
\end{prop}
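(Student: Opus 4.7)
The plan is to prove the two directions separately, exploiting the fact (recorded in Remark~\ref{rem:poly-alg}(i)) that each $\XO{d_i}$ is singly generated in degree $d_i$ by the variable $x_{\id_{[d_i]}} \in \XO{d_i}_{[d_i]}$, so that polynomial $\FIO$-algebras play the role of free objects in a suitable universal sense.

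For the direction (ii) $\Rightarrow$ (i), I would argue as follows. Assume there is a surjective natural transformation $\phi\colon \XO{d_1} \otimes_K \cdots \otimes_K \XO{d_k} \to \Ab$ with each $d_i \le d$. For each $i$, let $\xi_i$ denote the image in the tensor product of $x_{\id_{[d_i]}} \otimes 1 \otimes \cdots \otimes 1$ (placed in the $i$-th slot), viewed as an element of the algebra at level $[d_i]$. Since each factor $\XO{d_i}$ is generated by $x_{\id_{[d_i]}}$, the tensor product is generated as an $\FIO$-algebra by $\{\xi_1,\ldots,\xi_k\}$. Applying $\phi$ level-wise and using that $\phi$ is surjective, the finite set $\{\phi_{[d_i]}(\xi_i)\}_{i=1}^k \subseteq \coprod_{n\leq d}\Ab_n$ generates $\Ab$, which gives (i).

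For the direction (i) $\Rightarrow$ (ii), assume $\Ab$ is finitely generated by $G = \{a_1,\ldots,a_k\}$ with $a_i \in \Ab_{n_i}$ and $n_i \le d$. The key step is to construct, for each $i$, a natural transformation $\phi_i \colon \XO{n_i} \to \Ab$ by the formula
\[
(\phi_i)_S(x_\eps) = \eps^*(a_i) \quad \text{for every } \eps \in \Hom_{\FIO}([n_i], S).
\]
Since $\XO{n_i}_S$ is a polynomial ring in the variables $\{x_\eps\}_\eps$, this extends uniquely to a $K$-algebra homomorphism on each $S$. Naturality amounts to checking that for any morphism $\tau\colon S \to T$ one has $\tau^* \circ (\phi_i)_S = (\phi_i)_T \circ \XO{n_i}(\tau)$, and this reduces to the identity $\tau^*(\eps^*(a_i)) = (\tau\circ\eps)^*(a_i)$, which follows from the functoriality of $\Ab$ together with the definition $\XO{n_i}(\tau)(x_\eps) = x_{\tau\circ\eps}$. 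Then I would combine the $\phi_i$ into a single natural transformation
\[
\phi \colon \XO{n_1} \otimes_K \cdots \otimes_K \XO{n_k} \to \Ab
\]
by applying the universal property of the tensor product of commutative $K$-algebras at each level $S$. Surjectivity follows directly from the characterization of finite generation recorded after the definition: for each totally ordered finite $T$,
\[
\Ab_T = K\bigl[\eps^*(a_i) : \eps \in \Hom_{\FIO}([n_i], T),\ 1 \le i \le k\bigr],
\]
and by construction these generators lie in the image of $\phi_T$. Since $n_i \le d$ for all $i$, this gives the required surjection in (ii). The $\FI$-case is entirely analogous, replacing $\Hom_{\FIO}$ by $\Hom_{\FI}$ throughout.

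I do not anticipate a serious obstacle here: the argument is conceptually the standard "freeness" of the polynomial object $\XO{d}$ on one degree-$d$ generator. The most delicate point is verifying naturality of $\phi_i$ (and then of $\phi$) carefully across all morphisms in $\FIO$, which ultimately is a bookkeeping exercise in functoriality rather than a substantive difficulty.
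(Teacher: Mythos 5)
Your proposal is correct and takes essentially the same route as the paper: the paper also deduces (ii) $\Rightarrow$ (i) from the single-generator description of each $\XO{d_i}$, and for (i) $\Rightarrow$ (ii) invokes the same canonical natural transformation $\XO{d_1}\otimes_K\cdots\otimes_K\XO{d_k}\to\Ab$ determined by the generating set. You have merely spelled out explicitly (via the formula $(\phi_i)_S(x_\eps)=\eps^*(a_i)$ and the naturality check) what the paper states tersely as ``determines canonically.''
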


\begin{proof}
We prove this for $\FIO$-algebras, leaving the $\FI$ case to the interested reader.

By Remark \ref{rem:poly-alg}(i), the algebra $\XO{d_1} \otimes_K \cdots \otimes_K \XO{d_k}$ has $k$ generators of degrees $d_1,\ldots,d_k$. Thus, (ii) implies (i).

Conversely, a set $G = \{a_1,\ldots,a_k\}$ with $a_i \in \Ab_{d_i}$ determines
canonically a natural transformation $\XO{d_1} \otimes_K \cdots \otimes_K \XO{d_k} \to \Ab$. Its image is the
$\FIO$-algebra generated by $G$.
\end{proof}

\begin{rem}
\
\begin{enumerate}
\item
A \emph{$\Z$-graded $\FIO$-algebra} is an $\FIO$-algebra $\Ab$ over $K$ such that every $\Ab_S$ is a $\Z$-graded $K$-algebra and every map $\Ab (\eps)\colon \Ab_S \to \Ab_T$ is a graded homomorphism of degree zero.
We will refer to it simply as a graded $\FIO$-algebra. Similarly, we define a graded $\FI$-algebra.
\item
Note that the polynomial algebras $\XO{d}$ and $\XI{d}$ are naturally graded. Furthermore, there are analogous results for graded algebras for each of the above results.
\item
More generally, one can consider $\FIO$- and $\FI$-algebras with a more general grading semigroup. We leave this to the interested reader.
\end{enumerate}
\end{rem}

Consider any $\Z$-graded ring $R = \oplus_{j \in \Z} [R]_j$ and fix an integer $e \ge 1$. The subring $\oplus_{j \in \Z} [R]_{j e}$ is called the \emph{$e$-th Veronese subring} of $R$ and denoted by $R^{(e)}$. There is an analogous construction for $\FIO$- and $\FI$-algebras.

\begin{ex}
\label{exa:Veronese}
Let $\Ab$ be a $\Z$-graded $\FIO$-algebra (or $\FI$-algebra, respectively). For any $e \in \N$, let $\Ab^{(e)}$ be the subalgebra of $\Ab$ induced by setting
$$\big ( \Ab^{(e)} \big )_n = (\Ab_n)^{(e)} \text{ for every } n \in \N_0.
$$
It is called the \emph{$e$-th Veronese subalgebra} of $\Ab$. It also is a $\Z$-graded algebra. Note that $\Ab^{(e)}$ is generated as $\FIO$-algebra by the set
$ \coprod_{n \ge 0} \bigcup_{j \ge 0} [\Ab_n]_{j e}$.
\end{ex}


\section{$\FI$- and $\FIO$-modules}
\label{sec:fio-modules}

For introducing our main objects of interest, we denote by $K$-$\Mod$ the category of $K$-modules.

\begin{defn}
\label{def:OI-module}
\
\begin{enumerate}
\item
Let $\Ab$ be an $\FIO$-algebra over $K$. The category of \emph{$\FIO$-modules over $\Ab$} is denoted by $\FIO$-$\Mod (\Ab)$. Its objects are covariant functors $\M \colon \FIO \to K$-$\Mod$ such that,
\begin{itemize}
\item[(1)] for any finite totally ordered set $S$, the $K$-module $\M_S = \M(S)$ is also an $\Ab_S$-module, \ and

\item[(2)]
for any morphisms $\widetilde{\eps}\colon \widetilde{S} \to S$, $\eps\colon S \to T$ and any $a \in \Ab_{\widetilde{S}}$, the following diagram is commutative
\begin{equation*}
\xy\xymatrixrowsep{1.8pc}\xymatrixcolsep{1.8pc}
\xymatrix{
\M_S \ar @{->}[r]^-{\M (\eps)} \ar @{->}[d]_-{\cdot \Ab(\widetilde{\eps}) (a)} & \M_T \ar @{->}[d]^-{\cdot \Ab(\eps \circ \widetilde{\eps})(a)} \\
\M_S \ar @{->}[r]^-{\M (\eps)} & \M_T.
}
\endxy
\end{equation*}
Here the vertical maps are given by multiplication by the indicated elements.
\end{itemize}
The morphisms of $\FIO$-$\Mod (\Ab)$ are natural transformations $F \colon \M \to \Nb$ such that, for each totally ordered finite set $S$, the map $\M_S \stackrel{F(S)}{\longrightarrow} \Nb_S$ is an $\Ab_S$-module homomorphism.
\item
Ignoring orders, we define similarly the category $\FI$-$\Mod (\Ab)$ of $\FI$-modules over an $\FI$-algebra $\Ab$.

Its objects are functors from $\FI$ to $K$-$\Mod$ and its morphisms are natural transformations satisfying conditions analogous to those above.
\end{enumerate}
\end{defn}

\begin{rem}
\label{rem:abelian-cat-submodules}
\
\begin{enumerate}
\item
Given any morphisms $\eps_1\colon S_1 \to S$, $\eps_2\colon S_2 \to S$, $ \eps\colon S \to T$ and any elements $a \in \Ab_{S_1}$, $q \in \M_{S_2}$, the assumptions imply, for example,
\[
\M (\eps) \bigg ( \Ab (\eps_1) (a) \cdot \M(\eps_2) (q) \bigg ) = \Ab (\eps \circ \eps_1) (a) \cdot \M (\eps \circ \eps_2) (q).
\]
\item
In the case of \emph{constant coefficients}, where $\Ab = \XI{0}$ is the ``constant'' $\FI$-algebra over $K$, that is, $\Ab_n = K$ for each $n$, the category $\FI$-$\Mod (\Ab)$ is exactly the category of $\FI$-modules over $K$ as, for example, studied in \cite{CE, CEF, CEFN}. We are mainly interested in the case where the coefficients, that is, the $K$-algebras $\Ab_n$ vary with $n$. This additional structure is crucial for our results.
\item
The categories $\FIO$-$\Mod (\Ab)$ and $\FI$-$\Mod (\Ab)$ inherit the structure of an abelian category from $K$-$\Mod$, with all concepts such as subobject, quotient object, kernel, cokernel, injection, and surjection being defined ``pointwise'' from the corresponding concepts in $K$-$\Mod$ (see \cite[A.3.3]{W}).
\item
Notice that every $\FI$-module over an $\FI$-algebra may also be considered as an $\FIO$-module over the induced $\FIO$-algebra. Again, we typically will use the same notation for these modules.
\end{enumerate}
\end{rem}

\begin{ex}
Consider the polynomial ring $K[X]$ and the $\FIO$-algebra $\Pb$ as defined in Example \ref{exa:OI-alg}.
\begin{enumerate}
\item
An $\Inc$-invariant filtration $\Ic = (I_n)_{n \in \N}$ of ideals $I_n \subset \Pb_n$ is the same as an ideal (submodule) of the $\FIO$-algebra $\Pb$.
\item
Similarly, an $\Sym (\infty)$-invariant filtration $\Ic = (I_n)_{n \in \N}$
(see, e.g., \cite{HS} or \cite[Theorem 7.8]{NR}) is the same as an ideal of $\Pb$, considered
as an $\FI$-algebra. Such a filtration is also an $\Inc$-invariant filtration
(see Remark \ref{rem:abelian-cat-submodules}).
\end{enumerate}
\end{ex}

\begin{defn}
\label{def:limit-module}
Let $\M$ be an $\FIO$-module over an $\FIO$-algebra $\Ab$. Define its \emph{colimit}
$$
\lim \M = {\displaystyle \lim_{\longrightarrow}}\, \M_n \in K\text{-}\Mod
$$
using the direct system $(\M_n, \M (\iota_{m, n}))$ with maps $\iota_{m,n}$ given in \eqref{eq:iota}.

Similarly, one defines the colimit $\lim \M$ for an $\FI$-algebra $\M$.
\end{defn}

For any $q \in \M_m$, denote by $[q]$ its class in the colimit $\lim \M$.

\begin{lem}
If $\M$ is an $\FIO$-module over an $\FIO$-algebra $\Ab$, then $\lim \M$ is a module over $K[\Ab]$ with scalar multiplication defined by
\[
[a] \cdot [q] = [\Ab (\iota_{m, k}) (a) \cdot \M (\iota_{n, k}) (q)],
\]
where $a \in \Ab_m$, $q \in \M_n$, and $k = \max\{m, n\}$.

Similarly, we have that $\lim \M$ is a module over $K[\Ab]$ if $\M$ is an $\FI$-module over an $\FI$-algebra $\Ab$.
\end{lem}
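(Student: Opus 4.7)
The plan is to first unpack the commutativity condition~(2) in Definition~\ref{def:OI-module} into a usable \emph{semi-linearity} identity, and then use that identity to verify (a) well-definedness of the proposed scalar multiplication on colimit classes and (b) the module axioms. Specializing condition~(2) with $\widetilde{\eps}=\id_S$ yields that for every morphism $\eps\colon S\to T$ and all $a\in \Ab_S$, $q\in \M_S$,
\[
\M(\eps)(a\cdot q) \;=\; \Ab(\eps)(a)\cdot \M(\eps)(q).
\]
This is the crucial compatibility that lets the $\Ab_n$-module structures on each $\M_n$ descend to a $K[\Ab]$-module structure on $\lim\M$.

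For well-definedness, suppose $a\in \Ab_m$, $a'\in \Ab_{m'}$ represent the same class $[a]=[a']\in K[\Ab]$ and $q\in \M_n$, $q'\in \M_{n'}$ represent the same class $[q]=[q']\in\lim\M$. Choose an integer $l$ large enough so that $\Ab(\iota_{m,l})(a) = \Ab(\iota_{m',l})(a')$ in $\Ab_l$ and $\M(\iota_{n,l})(q) = \M(\iota_{n',l})(q')$ in $\M_l$. Setting $k=\max\{m,n\}$ and applying $\M(\iota_{k,l})$ to the proposed product $\Ab(\iota_{m,k})(a)\cdot \M(\iota_{n,k})(q)$, the semi-linearity identity together with functoriality $\Ab(\iota_{k,l})\circ \Ab(\iota_{m,k}) = \Ab(\iota_{m,l})$ and $\M(\iota_{k,l})\circ \M(\iota_{n,k}) = \M(\iota_{n,l})$ convert it to $\Ab(\iota_{m,l})(a)\cdot \M(\iota_{n,l})(q)$, the product computed entirely at level $l$. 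The same computation for the primed data produces the same element of $\M_l$, so the two candidate classes in $\lim\M$ coincide.

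The remaining module axioms (distributivity on both sides, associativity with the algebra multiplication, and the unit axiom) transfer from the module structures on the individual $\M_l$: given finitely many representatives of elements of $K[\Ab]$ and $\lim\M$, push all of them up to a common sufficiently large level $l$ via the appropriate $\Ab(\iota_{\cdot,l})$ and $\M(\iota_{\cdot,l})$; at that level every identity holds because $\M_l$ is an honest $\Ab_l$-module; taking classes in $K[\Ab]$ and $\lim\M$ then preserves each identity. The $\FI$ case is handled verbatim, since each $\iota_{m,n}$ is a morphism in both $\FIO$ and $\FI$ and the derivation of semi-linearity uses only condition~(2).

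The main obstacle is essentially a bookkeeping one: extracting the clean semi-linearity identity from the two-variable commutative square in Definition~\ref{def:OI-module}(2) and then juggling the indices $m,m',n,n',k,l$ without confusing the source and target of each structure map. Once the semi-linearity formula is recorded and one commits to pushing every computation up to a common sufficiently large level~$l$, the proof reduces to routine applications of the universal property of the direct limit.
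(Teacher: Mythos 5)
Your argument is correct: specializing Definition~\ref{def:OI-module}(2) to $\widetilde{\eps}=\id_S$ gives exactly the semi-linearity $\M(\eps)(a\cdot q)=\Ab(\eps)(a)\cdot\M(\eps)(q)$ needed, and pushing representatives to a common level $l$ via functoriality of the $\iota$-maps settles both well-definedness and the module axioms. This is precisely the routine verification the paper alludes to and leaves to the reader, so your write-up simply supplies those omitted details.
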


\begin{proof}
This is a routine argument. We leave the details to the interested reader.
\end{proof}

The above colimits also admit certain actions. Recall that the maps $\eps_{\pi, m}$ are introduced in Definition~\ref{def:induced-maps-by-pi}.

\begin{prop}
\
\begin{enumerate}
\item Let $\M$ be an $\FIO$-module. Then there is an $\Inc$-action on $\lim \M$ defined by
 \[
\pi \cdot [q] = [\M (\eps_{\pi, m}) (q)],
\]
where $q \in \M_m$ and $\pi \in \Inc$.
\item
 Let $\M$ be an $\FI$-module. Then there is a $\Sym (\infty)$-action on $\lim \M$ defined by
 \[
\pi \cdot [q] = [\M (\eps_{\pi, m}) (q)],
\]
where $q \in \M_m$ and $\pi \in \Sym (\infty)$.
 \end{enumerate}
\end{prop}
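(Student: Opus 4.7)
The plan is to mirror closely the proof of Proposition~\ref{prop:action-on-limits}, replacing the algebra maps $\eps^*$ by the module maps $\M(\eps)$, and to verify the two things an action needs: well-definedness on equivalence classes of the colimit, and compatibility with composition.

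For part (i), first I would check well-definedness. Suppose $q \in \M_m$ and $q' \in \M_n$ represent the same class, so there is some $k \ge m, n$ with $\M(\iota_{m,k})(q) = \M(\iota_{n,k})(q')$. Applying the functor $\M$ to the identity of Lemma~\ref{lem:map-indentities}(i), namely $\iota_{\pi(m),\pi(k)} \circ \eps_{\pi,m} = \eps_{\pi,k} \circ \iota_{m,k}$, yields
\[
\M(\iota_{\pi(m),\pi(k)})(\M(\eps_{\pi,m})(q)) = \M(\eps_{\pi,k})(\M(\iota_{m,k})(q)),
\]
and similarly on the $q'$-side with $m$ replaced by $n$. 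Since the right-hand sides agree by assumption, the left-hand sides do too, which means $[\M(\eps_{\pi,m})(q)] = [\M(\eps_{\pi,n})(q')]$. Hence the assignment $\pi \cdot [q] = [\M(\eps_{\pi,m})(q)]$ is a well-defined map on $\lim \M$; it is clearly $K$-linear, so it is an endomorphism of the $K$-module $\lim \M$.

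Next I would verify the action axioms. The identity of $\Inc$ induces $\eps_{\id, m} = \id_{[m]}$, so the identity acts trivially by functoriality of $\M$. For composition, observe directly from Definition~\ref{def:induced-maps-by-pi}(i) that $\eps_{\tilde\pi \circ \pi,\, m} = \eps_{\tilde\pi,\,\pi(m)} \circ \eps_{\pi,m}$, as was used in the proof of Proposition~\ref{prop:action-on-limits}. Applying $\M$ and using functoriality,
\[
(\tilde\pi \circ \pi)\cdot [q] = [\M(\eps_{\tilde\pi,\pi(m)})(\M(\eps_{\pi,m})(q))] = \tilde\pi \cdot [\M(\eps_{\pi,m})(q)] = \tilde\pi\cdot(\pi\cdot[q]).
\]
This establishes (i).

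For part (ii), the argument is identical in form, using Lemma~\ref{lem:map-indentities}(ii) for well-definedness (noting that the integer $l_k$ replaces $\pi(k)$) and the corresponding relation $\eps_{\tilde\pi \circ \pi, m} = \eps_{\tilde\pi, l_m} \circ \eps_{\pi, m}$, which one checks directly from Definition~\ref{def:induced-maps-by-pi}(ii). I do not foresee a serious obstacle: the only subtlety is that the $K$-module compatibility with the $\Ab_S$-action is never invoked, because the action only uses the functoriality of $\M$ on $\FIO$ (resp.\ $\FI$) morphisms, not the algebra structure. One could note as a remark that one does not in fact need condition~(2) of Definition~\ref{def:OI-module} for this statement.
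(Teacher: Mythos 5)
Your proposal is correct and follows exactly the route the paper intends: the paper's proof simply says the argument is completely analogous to Proposition~\ref{prop:action-on-limits}, and you carry out precisely that analogy, using Lemma~\ref{lem:map-indentities} for well-definedness and the identity $\eps_{\tilde\pi \circ \pi, m} = \eps_{\tilde\pi, \pi(m)} \circ \eps_{\pi, m}$ for compatibility with composition (your closing remark that only functoriality, not condition~(2) of Definition~\ref{def:OI-module}, is used is accurate). The only microscopic imprecision is in (ii): for $\Sym(\infty)$ the maps $\eps_{\tilde\pi \circ \pi, m}$ and $\eps_{\tilde\pi, l_m} \circ \eps_{\pi, m}$ may have different codomains and agree only after composing with an inclusion $\iota$, which still yields equal classes in the colimit -- a detail the paper glosses over in the same way.
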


\begin{proof}
The argument is completely analogous to the one for Proposition \ref{prop:action-on-limits}.
\end{proof}

The action on colimits and their scalar multiplication are compatible in the following sense.

\begin{lem}
Let $\M$ be an $\FIO$-module over an $\FIO$-algebra $\Ab$ (or an $\FI$-module over an $\FI$-algebra $\Ab$,
respectively). For any $a \in \Ab_m$, $q \in \M_s$ and any $\pi \in \Inc$ (or $\pi \in \Sym (\infty)$, respectively), one has
\[
\pi \cdot \big ( [a] \cdot [q] \big ) = \big (\pi \cdot [a] \big) \cdot \big ( \pi \cdot [q] \big ).
\]
\end{lem}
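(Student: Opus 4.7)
The plan is to expand both sides using the definitions of scalar multiplication in $\lim\M$ and of the $\Inc$- (respectively $\Sym(\infty)$-) action on the colimit, and then recognize that the two resulting representatives agree by combining the compatibility diagram from Definition~\ref{def:OI-module} with Lemma~\ref{lem:map-indentities} and functoriality of $\Ab$ and $\M$. I would handle the $\FIO$-case first; the $\FI$-case is entirely parallel with $l_k$ in place of $\pi(k)$.

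For the left-hand side I would set $k=\max\{m,s\}$, write $[a]\cdot[q]$ as the class of the element $\Ab(\iota_{m,k})(a)\cdot \M(\iota_{s,k})(q)\in \M_k$, and then apply the $\Inc$-action to obtain the representative $\M(\eps_{\pi,k})\bigl(\Ab(\iota_{m,k})(a)\cdot\M(\iota_{s,k})(q)\bigr)\in\M_{\pi(k)}$. Here the commutativity square built into Definition~\ref{def:OI-module} (spelled out in Remark~\ref{rem:abelian-cat-submodules}(i), applied with $\widetilde\eps=\iota_{m,k}$ and $\eps=\eps_{\pi,k}$) lets me pull $\M(\eps_{\pi,k})$ through the scalar action. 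Combined with functoriality of $\M$, this converts the representative into
\[
\Ab(\eps_{\pi,k}\circ\iota_{m,k})(a)\cdot \M(\eps_{\pi,k}\circ\iota_{s,k})(q).
\]

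For the right-hand side, since $\pi$ is order-preserving one has $\pi(k)=\max\{\pi(m),\pi(s)\}$, so the definition of scalar multiplication in $\lim\M$ together with functoriality of $\Ab$ and $\M$ immediately expresses $(\pi\cdot[a])\cdot(\pi\cdot[q])$ as the class of
\[
\Ab(\iota_{\pi(m),\pi(k)}\circ\eps_{\pi,m})(a)\cdot \M(\iota_{\pi(s),\pi(k)}\circ\eps_{\pi,s})(q).
\]
Two applications of Lemma~\ref{lem:map-indentities}(i) identify the composed arrows $\iota_{\pi(m),\pi(k)}\circ\eps_{\pi,m}$ and $\iota_{\pi(s),\pi(k)}\circ\eps_{\pi,s}$ with $\eps_{\pi,k}\circ\iota_{m,k}$ and $\eps_{\pi,k}\circ\iota_{s,k}$ respectively, so the two representatives are literally the same element of $\M_{\pi(k)}$, which gives the asserted equality in $\lim\M$.

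The $\FI$-case is proved by the same four-line calculation with Lemma~\ref{lem:map-indentities}(ii) replacing (i), once we note that $l_k=\max\{l_m,l_s\}$; this in turn is immediate from the minimality of $l_j$ in Definition~\ref{def:induced-maps-by-pi}(ii), since $\pi([k])\supseteq \pi([m])\cup\pi([s])$. I expect the only real obstacle to be bookkeeping with indices and checking that the compatibility axiom of Definition~\ref{def:OI-module} is invoked with the correct roles of $\widetilde\eps$ and $\eps$; beyond that, the argument is a routine chase.
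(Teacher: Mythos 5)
Your argument is correct and is essentially the paper's own proof: expand both sides over $t=\max\{m,s\}$, pull $\M(\eps_{\pi,t})$ through the scalar action via the compatibility diagram (Remark~\ref{rem:abelian-cat-submodules}(i)), and identify the representatives using Lemma~\ref{lem:map-indentities}. Your extra observations that $\pi(k)=\max\{\pi(m),\pi(s)\}$ and $l_k=\max\{l_m,l_s\}$ are exactly the small points the paper leaves implicit, so nothing is missing.
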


\begin{proof}
Set $t = \max \{m, s\}$. The definitions imply
\begin{eqnarray*}
\pi \cdot \big ( [a] \cdot [q] \big ) & = & \big [\M (\eps_{\pi, t}) \big (\Ab (\iota_{m, t} (a) \cdot \M (\iota_{s, t}) (q) \big ) \big] \\
& = & \big [ \Ab (\eps_{\pi, t} \circ \iota_{m, t}) (a) \cdot \M(\eps_{\pi, t} \circ \iota_{s, t}) (q) \big ] \\
& = & \big [ \Ab (\iota_{\pi (m), \pi(t)} \circ \eps_{\pi, m} ) (a) \cdot \M(\iota_{\pi (s), \pi(t)} \circ \eps_{\pi, s}) (q) \big ] \\
& = & \big [ \Ab (\eps_{\pi, m} ) (a) \cdot \M(\eps_{\pi, s}) (q) \big ]
 = \big (\pi \cdot [a] \big) \cdot \big ( \pi \cdot [q] \big ),
\end{eqnarray*}
where we used Remark~\ref{rem:abelian-cat-submodules}(i) for the second equality. The third equality is a consequence of  $\eps_{\pi, t} \circ \iota_{m, t} = \iota_{\pi (m), \pi(t)} \circ \eps_{\pi, m}$.
\end{proof}

As for algebras (see Lemma~\ref{lem:compatible-structures}), one obtains the following observation.

\begin{lem}
\label{lem:compatible-structures-mod}
\

\begin{enumerate}
\item Let $\M$ be an $\FIO$-module over an $\FIO$-algebra $\Ab$. Given any $\eps \in \Hom_{\FIO} ([m], [n])$, there is some $\pi_{\eps} \in \Inc$ such that
\[
[\M (\eps) (q)] = \pi_{\eps} \cdot [q] \quad \text{ for every } q \in \M_m.
\]

\item Let $\M$ be an $\FI$-module over an $\FI$-algebra $\Ab$. Given any $\eps \in \Hom_{\FI} ([m], [n])$, there is some $\pi_{\eps} \in \Sym (n)$ such that
\[
[\M (\eps) (q)] = \pi_{\eps} \cdot [q] \quad \text{ for every } q \in \M_m.
\]

\end{enumerate}
\end{lem}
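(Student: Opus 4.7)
The proof will mirror that of Lemma~\ref{lem:compatible-structures} almost verbatim, replacing the algebra morphism $\eps^{*} = \Ab(\eps)$ with the module morphism $\M(\eps)$ and exploiting functoriality of $\M$ together with the defining property of the colimit.

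For part (i), I would take $\pi_{\eps} \in \Inc$ to be exactly the map constructed in \eqref{eq:induced-by-eps}, namely
\[
\pi_\eps (j) = \begin{cases} \eps(j) & \text{if } j \in [m],\\ \eps(m) + j - m & \text{if } j > m. \end{cases}
\]
As already verified inside the proof of Lemma~\ref{lem:compatible-structures}(i), this choice satisfies the identity $\eps = \iota_{\eps(m), n} \circ \eps_{\pi_{\eps}, m}$ in the category $\FIO$. Applying the functor $\M$ yields, for any $q \in \M_m$,
\[
\M(\eps)(q) = \M\bigl(\iota_{\eps(m), n}\bigr)\bigl(\M(\eps_{\pi_{\eps}, m})(q)\bigr).
\]
Now pass to the colimit $\lim \M$: by Definition~\ref{def:limit-module} the structure maps $\M(\iota_{m', n'})$ are precisely the transition maps of the direct system, so $[\M(\iota_{\eps(m), n})(p)] = [p]$ for every $p \in \M_{\eps(m)}$. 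Applying this to $p = \M(\eps_{\pi_{\eps}, m})(q) \in \M_{\eps(m)}$ gives $[\M(\eps)(q)] = [\M(\eps_{\pi_{\eps}, m})(q)]$, and the right-hand side equals $\pi_{\eps} \cdot [q]$ by the definition of the $\Inc$-action on $\lim \M$.

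For part (ii), the argument is identical in form: choose any $\pi_{\eps} \in \Sym(n)$ with $\pi_{\eps}(j) = \eps(j)$ for $j \in [m]$, exactly as in Lemma~\ref{lem:compatible-structures}(ii). The same verification shows $\eps = \iota_{l_m, n} \circ \eps_{\pi_{\eps}, m}$, and then functoriality of $\M$ combined with the fact that $\M(\iota_{l_m, n})$ is a transition map of the direct system defining $\lim \M$ yields $[\M(\eps)(q)] = [\M(\eps_{\pi_{\eps}, m})(q)] = \pi_{\eps} \cdot [q]$.

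There is no real obstacle here: the entire argument is a routine transcription of Lemma~\ref{lem:compatible-structures} in which the only substantive ingredients, namely the factorizations $\eps = \iota_{\eps(m), n} \circ \eps_{\pi_{\eps}, m}$ and $\eps = \iota_{l_m, n} \circ \eps_{\pi_{\eps}, m}$, have already been established there. The mildly delicate point to flag is the direction of functoriality, ensuring that one inserts $\eps_{\pi_{\eps}, m}$ on the inside and $\iota$ on the outside so that the $\iota$-map is precisely a direct-system transition that is collapsed by taking classes in the colimit; this matches the covariance of $\M$ and the fact that the composition is read from right to left.
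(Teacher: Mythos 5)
Your proposal is correct and matches what the paper intends: the paper gives no explicit proof for this lemma, instead invoking the same argument as Lemma~\ref{lem:compatible-structures}, and your write-up carries out precisely that transcription, reusing the factorizations $\eps = \iota_{\eps(m), n} \circ \eps_{\pi_{\eps}, m}$ and $\eps = \iota_{l_m, n} \circ \eps_{\pi_{\eps}, m}$ and then applying functoriality of $\M$ together with the fact that $\M(\iota_{\cdot,\cdot})$ are the transition maps of the colimit. No gaps.
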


It follows that the structure of $\M$ and the above action on its colimit $\lim \M$ are compatible in the following sense.

\begin{cor}
Let $\M$ be an $\FIO$-module over an $\FIO$-algebra $\Ab$ (or an $\FI$-module over an $\FI$-algebra $\Ab$, respectively).
\begin{enumerate}
\item Given any $\pi \in \Inc$ (or $\pi \in \Sym (\infty)$, respectively) and any $m \in \N$, one has
\[
\pi \cdot [q] = \pi_{\eps_{\pi}} \cdot [q] \quad \text{ for every } \pi_{\eps} \text{ as in Lemma~\ref{lem:compatible-structures-mod} and every } q \in \M_m.
\]

\item Given any $\eps \in \Hom_{\FIO} ([m], [n])$ (or $\eps \in \Hom_{\FI} ([m], [n])$), one has
\[
[\M (\eps) (q)] = [\M (\eps_{\pi_{\eps}}) (q)] \quad \text{ for every } \pi_{\eps} \text{ as in Lemma~\ref{lem:compatible-structures-mod} and every } q \in \M_m.
\]
\end{enumerate}

\end{cor}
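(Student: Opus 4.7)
The statement to prove is a direct modular analog of the corollary following Lemma~\ref{lem:compatible-structures}, and both parts should fall out by combining the definition of the $\Inc$ (respectively $\Sym(\infty)$) action on $\lim \M$ with Lemma~\ref{lem:compatible-structures-mod}. The plan is to reduce each equality to a factorization identity of the form $\eps = \iota \circ \eps_{\pi, m}$ that appeared in the proof of Lemma~\ref{lem:compatible-structures}, and then to apply the lemma and the definition of the action in the appropriate order.

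For part (i), I would start with $\pi \in \Inc$ (or $\pi \in \Sym(\infty)$) and $q \in \M_m$, and consider the morphism $\eps_{\pi, m}\colon [m]\to[\pi(m)]$ (or $[m]\to[l_m]$) from Definition~\ref{def:induced-maps-by-pi}. By the very definition of the action on $\lim \M$,
\[
\pi \cdot [q] \;=\; [\M(\eps_{\pi, m})(q)].
\]
On the other hand, Lemma~\ref{lem:compatible-structures-mod} applied to the morphism $\eps_{\pi, m}$ yields some $\pi_{\eps_{\pi}} \in \Inc$ (respectively in $\Sym(n)$) with
\[
[\M(\eps_{\pi, m})(q)] \;=\; \pi_{\eps_{\pi}}\cdot [q].
\]
Comparing these two identities gives the claim, and the argument holds for \emph{every} choice of $\pi_{\eps}$ permitted by the lemma, since the lemma only requires the defining equation to hold on all $q\in \M_m$.

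For part (ii), I would go in the opposite direction: given $\eps\in \Hom_{\FIO}([m],[n])$ (or $\Hom_{\FI}([m],[n])$), apply Lemma~\ref{lem:compatible-structures-mod} to produce $\pi_{\eps}$ with $[\M(\eps)(q)] = \pi_{\eps}\cdot [q]$. Then use the definition of the action on the right-hand side to rewrite
\[
\pi_{\eps}\cdot [q] \;=\; [\M(\eps_{\pi_{\eps}, m})(q)] \;=\; [\M(\eps_{\pi_{\eps}})(q)],
\]
where $\eps_{\pi_{\eps}}$ is the notation from Definition~\ref{def:induced-maps-by-pi}. Chaining the two equalities yields the desired identity $[\M(\eps)(q)] = [\M(\eps_{\pi_{\eps}})(q)]$.

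The proof is essentially a bookkeeping exercise, so I do not anticipate a serious obstacle. The only subtlety is to keep straight the two layers of notation, namely which symbol $\eps_{\pi,m}$ refers to the canonical morphism induced by a permutation and which symbol $\pi_{\eps}$ refers to the (non-unique) permutation extracted from a morphism in Lemma~\ref{lem:compatible-structures-mod}; once this bookkeeping is fixed, both statements drop out immediately, and the $\FIO$- and $\FI$-versions are proved by exactly parallel arguments using part (i) versus part (ii) of Lemma~\ref{lem:compatible-structures-mod}.
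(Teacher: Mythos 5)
Your argument is correct and is precisely the (short) computation the paper leaves implicit when it states the corollary ``follows'' from Lemma~\ref{lem:compatible-structures-mod}: part~(i) is the definition of the $\Pi$-action on $\lim\M$ combined with the lemma applied to $\eps_{\pi,m}$, and part~(ii) is the lemma applied to $\eps$ followed by the definition of the action applied to the resulting $\pi_\eps$. Your closing remark about the two layers of notation ($\eps_{\pi,m}$ from a permutation versus $\pi_\eps$ from a morphism) identifies the only real point to watch, and you handle it correctly.
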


We now discuss finite generation of modules.

\begin{defn}
Let $\M$ be an $\FIO$-module over an $\FIO$-algebra $\Ab$ (or an $\FI$-module over an $\FI$-algebra $\Ab$, respectively).
\begin{enumerate}
\item
$\M$ is called \emph{finitely generated}, if there exists
a finite subset $G \subset \coprod_{n\geq 0} \M_n$
which is not contained in any proper submodule of $\M$.

\item
$\M$ is called \emph{generated in degrees $\leq d$}
(resp.\ \emph{generated in degree $d$})
if there exists a set $G \subseteq \coprod_{0\leq n\leq d} \M_n$
(resp.\ $G \subseteq \M_d$)
which is not contained in any proper submodule of $\M$.
\end{enumerate}
In both cases, $G$ is called a \emph{generating set} of $\M$.
\end{defn}

\begin{rem}
\label{rem:finite-gen-module}
Let $\M$ be an $\FIO$-module over an $\FIO$-algebra $\Ab$.
Then $\M$ is generated by $G \subseteq \coprod_{0\leq n\leq d} \M_n$ if and only if one has, for every integer $k \ge 0$,
\[
\M_k= \big \la \M (\eps)(q) \; : \; q \in G \cap \M_n, \ \eps\in \Hom_{\FIO} ([n], [k]) \big \ra.
\]

The analogous statement is true for an $\FI$-module.
\end{rem}

There is an alternate  description of generation up to degree $d$.

\begin{defn}
\

\begin{enumerate}
\item Let $\M$ be an $\FIO$-module over an $\FIO$-algebra $\Ab$. It is said that $\M$ \emph{stabilizes} if there is an integer $r$ such that, as $\Ab_n$-modules, one has
\[
\big \la\M (\eps) (\M_r) \; : \;  \eps \in  \Hom_{\FIO} ([r], [n]) \big \ra = \M_n \text{ whenever } r \le n.
\]

\item An $\FI$-module over an $\FI$-algebra $\Ab$ \emph{stabilizes} if there is an integer $r$ such that, as $\Ab_n$-modules, one has
\[
\big \la\M (\eps) (\M_r) \; : \;  \eps \in  \Hom_{\FI} ([r], [n]) \big \ra = \M_n \text{ whenever } r \le n.
\]

\end{enumerate}

In both cases, the least integer $r \ge 1$ with this property is said to be the \emph{stability index} $\ind (\M)$ of $\M$.
\end{defn}

\begin{lem}
\label{lem:gen-vs-stability}
Let $\M$ be an $\FIO$-module over an $\FIO$-algebra $\Ab$ (or an $\FI$-module over an $\FI$-algebra $\Ab$, respectively). The following statements are equivalent:
\begin{enumerate}
\item $M$ is generated in degree $\le d$;

\item $M$ stabilizes and $\ind (\M) \le d$.
\end{enumerate}
\end{lem}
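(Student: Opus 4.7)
The plan is to derive the equivalence directly from the definitions, using Remark~\ref{rem:finite-gen-module} to rephrase generation in terms of images of $\FIO$-morphisms and iterated applications of Lemma~\ref{lem:decompose} to factor morphisms through $[d]$. I argue the $\FIO$ case; the $\FI$ case is identical, using the second identity in Lemma~\ref{lem:decompose}.

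For (ii)~$\Rightarrow$~(i), assume $\M$ stabilizes with $r := \ind(\M) \le d$ and set $G := \coprod_{0 \le n \le r} \M_n \subseteq \coprod_{0 \le n \le d} \M_n$. For each integer $k$, if $k \le r$ then $\M_k \subseteq G$ lies in the submodule generated by $G$ via the identity morphism, while if $k > r$ the stabilization condition gives that $\M_k$ equals the $\Ab_k$-span of $\M(\eps)(\M_r)$ over $\eps \in \Hom_{\FIO}([r], [k])$. Either way $\M_k$ is contained in the submodule generated by $G$, so $G$ generates $\M$ in degrees $\le d$.

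For (i)~$\Rightarrow$~(ii), fix a generating set $G \subseteq \coprod_{0 \le n \le d} \M_n$ and an integer $n \ge d$. By Remark~\ref{rem:finite-gen-module}, every element of $\M_n$ is an $\Ab_n$-linear combination of images $\M(\tau)(q)$ with $q \in G \cap \M_m$, $m \le d$, and $\tau \in \Hom_{\FIO}([m], [n])$. The crucial step, which I see as the main technical point, is to show
\[
\Hom_{\FIO}([m], [n]) = \Hom_{\FIO}([d], [n]) \circ \Hom_{\FIO}([m], [d]) \quad \text{whenever } m \le d \le n,
\]
which I would prove by induction on $d - m$: the base case $m = d$ is trivial, and the inductive step peels off a factor through $[m+1]$ via Lemma~\ref{lem:decompose}. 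The edge case $m = 0$ is not literally covered by Lemma~\ref{lem:decompose}, but is trivial since $\Hom_{\FIO}([0], [k])$ consists of the single empty map for every $k$ and hence factors through any $[d]$. Given a factorization $\tau = \sigma \circ \rho$ with $\sigma \in \Hom_{\FIO}([d], [n])$ and $\rho \in \Hom_{\FIO}([m], [d])$, functoriality yields $\M(\tau)(q) = \M(\sigma)\big(\M(\rho)(q)\big)$ with $\M(\rho)(q) \in \M_d$, and summing shows $\M_n$ lies in the $\Ab_n$-span of $\bigcup_{\sigma \in \Hom_{\FIO}([d], [n])} \M(\sigma)(\M_d)$. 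Hence $\M$ stabilizes with $\ind(\M) \le d$. Beyond this bookkeeping, no serious obstacle is anticipated.
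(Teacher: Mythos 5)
Your proposal is correct and follows essentially the same route as the paper: the paper's (terser) proof also takes the full coproduct $\coprod_{0\le n\le d}\M_n$ as generating set for one direction and invokes Lemma~\ref{lem:decompose} together with Remark~\ref{rem:finite-gen-module} for the other, which is exactly your iterated factorization $\Hom_{\FIO}([m],[n]) = \Hom_{\FIO}([d],[n]) \circ \Hom_{\FIO}([m],[d])$ spelled out in detail. Your explicit treatment of the $m=0$ edge case and the induction on $d-m$ is a careful elaboration of what the paper leaves implicit, not a different argument.
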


\begin{proof}
Taking $G = \coprod_{0\leq n\leq d} \M_n$, (ii) implies (i) by Remark~\ref{rem:finite-gen-module}.

Conversely, possibly replacing the given generating set by $\coprod_{0\leq n\leq d} \M_n$, we conclude by Lemma~\ref{lem:decompose} and Remark~\ref{rem:finite-gen-module}.
\end{proof}

Now we compare generators of a module and its colimit.
For this we need the notation
$$
\M_{n} \cap H:= \phi_n^{-1}(H)
\text{ for a subset } H \subseteq \lim \M,
$$
where $\phi_n$ is the natural homomorphism
from $\M_n$ to $\lim \M$ as introduced in Definition~\ref{def:limit-module}.

\begin{prop}
\label{prop:fin-gen-to-limit}
Let $\M$ be an $\FIO$-module over an $\FIO$-algebra $\Ab$ (or $\FI$-module over an $\FI$-algebra $\Ab$, respectively).
Let $\Pi=\Inc$ (or $\Pi=\Sym(\infty)$, respectively).

If $\M$ is finitely generated (in degrees $\leq d$), then $\lim \M$ is a finitely generated $K[\Ab]$-module up to $\Pi$-action, i.e.,
it is generated as a $K[\Ab]$-module by the $\Pi$-orbits of finitely many elements
(which have representatives in $\coprod_{n \le d} \M_n \cap \lim \M$).
\end{prop}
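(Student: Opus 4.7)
The plan is to mirror the argument given earlier for Proposition \ref{prop:compare-gen}, replacing the role of polynomial expressions in $\Ab_n$ by linear expressions with coefficients in $\Ab_n$ acting on the given generators of $\M$. I would handle both the $\FIO$ and $\FI$ case in parallel, since the only difference lies in which group $\Pi$ provides the orbits and which Hom-sets index the coefficients; the combinatorics is the same.

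First I would fix a finite generating set $G = \{q_1,\dots,q_k\}$ with $q_i \in \M_{n_i}$, and claim that $\lim\M$ is generated as a $K[\Ab]$-module by the $\Pi$-orbits of $[q_1],\dots,[q_k]$. To see this, pick an arbitrary class $[q] \in \lim\M$ with representative $q \in \M_n$. By Remark~\ref{rem:finite-gen-module} applied to the generating set $G$, one can write
\[
q \;=\; \sum_{i,\eps} a_{i,\eps}\cdot \M(\eps)(q_i),
\]
where $i$ ranges over $\{1,\dots,k\}$, $\eps$ ranges over $\Hom_{\FIO}([n_i],[n])$ (resp.\ $\Hom_{\FI}([n_i],[n])$), and the coefficients $a_{i,\eps}$ lie in $\Ab_n$. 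Passing to the colimit and using that scalar multiplication in $\lim\M$ is induced from the module structures of the $\M_n$, this yields
\[
[q] \;=\; \sum_{i,\eps} [a_{i,\eps}]\cdot [\M(\eps)(q_i)].
\]

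Next I would apply Lemma~\ref{lem:compatible-structures-mod}: for each morphism $\eps$ occurring above there exists $\pi_\eps \in \Pi$ such that $[\M(\eps)(q_i)] = \pi_\eps\cdot [q_i]$. Substituting, we obtain an expression of $[q]$ as a $K[\Ab]$-linear combination of elements from the $\Pi$-orbits of $[q_1],\dots,[q_k]$, proving that these $k$ orbits generate $\lim\M$ over $K[\Ab]$.

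The parenthetical statement about degrees follows automatically: if the original generating set can be chosen in $\coprod_{n\le d}\M_n$, then each $q_i$ is already a representative of $[q_i] \in \M_{n_i}\cap \lim \M$ with $n_i \le d$. I do not expect a genuine obstacle here; the only thing to take care of is the bookkeeping confirming that the three structures on $\lim\M$ --- the $K[\Ab]$-scalar multiplication, the $\Pi$-action, and the natural maps from $\M_n$ --- interact as dictated by Lemma~\ref{lem:compatible-structures-mod}, which has already been established.
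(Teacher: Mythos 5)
Your proposal is correct and follows essentially the same route as the paper's proof: express a representative $q\in\M_n$ as an $\Ab_n$-linear combination of the $\M(\eps)(q_i)$ via Remark~\ref{rem:finite-gen-module}, pass to the colimit, and replace each $[\M(\eps)(q_i)]$ by $\pi_\eps\cdot[q_i]$ using Lemma~\ref{lem:compatible-structures-mod}. The handling of the degree statement also matches the paper's (brief) treatment.
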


\begin{proof}
We prove this for an $\FIO$-module $\M$.
For an $\FI$-module the argument is similar.

Assume $\M$ is generated by $G=\{q_1,\dots,q_k\}$ where $a_i\in \Ab_{n_i}$
for $i=1,\dots,k$. Thus, for every integer $k \ge 0$,
\[
\M_k= \big \la \M (\eps)(q_i) \; : \; \eps\in \Hom_{\FIO} ([n_i], [k]), \ i = 1,\ldots,k \big \ra.
\]
Consider any $[q]\in \lim \M$, where $q \in \M_k$.
Then $q$ can be written as a linear combination of
$\M(\eps) (q_i)$ with coefficients in $\Ab_k$ where $\eps\in \Hom_{\FIO} ([n_i], [k])$. For any fixed map
$\eps\in \Hom_{\FIO} ([n_i], [k])$, there is a map $\pi_\eps \in \Inc$ such that $\pi_\eps \cdot [q_i] = [\M (\eps)(q_i)]$ (see
Lemma~\ref{lem:compatible-structures-mod}).

It follows that $[q]$ is a linear combination of certain elements of the $\Inc$-orbits of the $[q_i]$ with coefficients in $\Ab_k$.
Thus, $\lim \M$ is a finitely generated up to $\Inc$-action, as desired.
The arguments also imply the additional statements about degrees.
\end{proof}

\begin{rem}
\
\begin{enumerate}
\item
Again, the converse fails in general.
\item
It is typically difficult to show directly that $\lim \M$ is finitely generated. Indeed, \cite[Theorem 1.1]{HS} states that every $\FI$-ideal $\Ib$ of $\Pb$ (see Example~\ref{exa:OI-alg}) has a finitely generated colimit.
\end{enumerate}
\end{rem}

We now define a class of $\FIO$-modules in order to discuss freeness.

\begin{defn}
\label{def:free}
\
\begin{enumerate}
\item
For an $\FIO$-algebra $\Ab$ over $K$ and an integer $d \ge 0$, let $\Fo{d}$ be the $\FIO$-module over $\Ab$ defined by
\[
\Fo{d}_S = \oplus_{\pi} \Ab_S e_{\pi} \cong (\Ab_S )^{\binom{|S|}{d}},
\]
where $S$ is a totally ordered finite subset and the sum is taken over all $\pi \in \Hom_{\FIO} ([d], S)$, and
\[
\Fo{d}(\eps)\colon \Fo{d}_S \to \Fo{d}_T, \ a e_{\pi} \mapsto \eps^*(a) e_{\eps \circ \pi},
\]
where $a \in \Ab_S$ and $\eps\colon S \to T$ is an $\FIO$ morphism.

A \emph{free $\FIO$-module} over $\Ab$ is an $\FIO$-module that is isomorphic to
$\bigoplus_{\lambda \in \Lambda} \Fo{d_{\lambda}}$.
\item
Ignoring orders, we similarly define an $\FI$-module $\Fi{d}$ over an $\FI$-algebra $\Ab$ and a \emph{free $\FI$-module} over $\Ab$.
\end{enumerate}
\end{defn}

\begin{rem}
\label{rem:free}
\
\begin{enumerate}
\item
If $\Ab$ is an $\FIO$-algebra, then the module $\Fo{0}$ is isomorphic to $\Ab$ considered as an $\FIO$-module over itself. Two modules $\Fo{d_1}$ and $\Fo{d_2}$ are isomorphic if and only if $d_1 = d_2$.
\item
Note that $\Fo{d}$ is not isomorphic to a direct sum of copies of $\Ab$ if $d \ge 1$. Thus, there are free $\FIO$-modules over $\Ab$ other than direct sums of copies of $\Ab$.
\item
For every $\FI$-algebra $\Ab$, there is a free $\FI$-module $\Fi{d}$ and an induced free $\FIO$-module. The latter is obtained by considering $\Ab$ as an $\FIO$-algebra. Notice that the $\Ab_n$-modules $\Fo{d}_n$ and $\Fi{d}_n$ are isomorphic for every integer $n \ge 0$ if $d \in \{0, 1\}$. In general, this is false if $d \ge 2$.
\item
More specifically, every free $\FI$-module $\Fi{d}$ over $(\XI{k})^{\otimes c}$ for any $d, k \in \N_0$ and $c \in \N$ is generated by $e_{\id_{[d]}}$. However, considered as an $\FIO$-module over the $\FIO$-algebra $(\XO{k})^{\otimes c}$, it is generated by $\{e_{\sigma} \; | \; \sigma \in \Sym (d)\} \subset \Fi{d}_d$. Indeed, consider any $\pi \in \Hom_{\FI} ([d], [n])$. There is a unique $\tilde{\pi} \in \Hom_{\FIO} ([d], [n])$ with the same image as $\pi$. Then $\pi = \tilde{\pi} \circ \sigma$ for some $\sigma \in \Sym (d)$.
\end{enumerate}
\end{rem}

\begin{prop}
\label{prop:finite-gen}
Let $\M$ be an $\FIO$-module over an $\FIO$-algebra $\Ab$.

\begin{enumerate}
\item $\M$ is finitely generated if and only if there is a surjection
\[
\bigoplus_{i = 1}^k \Fo{d_{i}} \to \M
\]
for some integers $d_i \ge 0$.
\item
$\M$ is generated in degrees $\le d$ if and only if there is a surjection
\[
\bigoplus_{\lambda \in \Lambda} \Fo{d_{\lambda}} \to \M \quad \text{ with all } d_{\lambda} \le d.
\]
\end{enumerate}

The analogous statements are true for every $\FI$-module $M$ over an $\FI$-algebra $\Ab$.
\end{prop}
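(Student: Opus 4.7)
The plan is to reduce both (i) and (ii) to a single ``universal property'' of the modules $\Fo{d}$: namely that a morphism $\Fo{d} \to \M$ of $\FIO$-modules over $\Ab$ is uniquely determined by the image of $e_{\id_{[d]}} \in \Fo{d}_d$, and conversely any element $q \in \M_d$ extends uniquely to such a morphism. Granting this, I would prove the proposition by constructing maps in one direction and exploiting generation on the other. The $\FI$ case is entirely analogous, with $\FIO$ replaced by $\FI$ throughout and using the $\FI$-version of Lemma~\ref{lem:decompose}.

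For the universal property, note that by Definition~\ref{def:free} we have $\Fo{d}_S = \bigoplus_{\pi \in \Hom_{\FIO}([d],S)} \Ab_S e_\pi$, so any $\Ab_S$-linear map $F_S \colon \Fo{d}_S \to \M_S$ is determined by the elements $F_S(e_\pi)$. If we demand further that $F$ be an $\FIO$-module morphism, the identity $e_\pi = \Fo{d}(\pi)(e_{\id_{[d]}})$ forces $F_S(e_\pi) = \M(\pi)(F_{[d]}(e_{\id_{[d]}}))$ for all $\pi \in \Hom_{\FIO}([d],S)$. Conversely, given any $q \in \M_d$, setting $F_S(a e_\pi) = a \cdot \M(\pi)(q)$ and extending by $\Ab_S$-linearity yields a well-defined natural transformation, as one checks directly using the compatibility axiom in Definition~\ref{def:OI-module}(2). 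This also shows that $\Fo{d}$ is generated in degree $d$ by $e_{\id_{[d]}}$, since the decomposition formula in Lemma~\ref{lem:decompose} is not even needed here — every $e_\pi$ is literally the image of $e_{\id_{[d]}}$ under $\Fo{d}(\pi)$.

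Now for ($\Leftarrow$) in both (i) and (ii): given a surjection $\bigoplus_\lambda \Fo{d_\lambda} \to \M$ (finite index set for (i); all $d_\lambda \le d$ for (ii)), the images of the elements $e_{\id_{[d_\lambda]}}$ form a generating set of $\M$ with the required degree constraint, by Remark~\ref{rem:finite-gen-module} combined with the previous paragraph. For ($\Rightarrow$), start with a generating set $G = \{q_1,\dots,q_k\}$ with $q_i \in \M_{n_i}$ (finite in (i); all $n_i \le d$ in (ii)). Apply the universal property to obtain morphisms $F^{(i)} \colon \Fo{n_i} \to \M$ with $F^{(i)}_{[n_i]}(e_{\id_{[n_i]}}) = q_i$, and assemble them into a single morphism $F \colon \bigoplus_{i=1}^k \Fo{n_i} \to \M$. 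Surjectivity of $F$ at each level $[k]$ is exactly the statement that $\M_k = \langle \M(\eps)(q_i) : \eps \in \Hom_{\FIO}([n_i],[k])\rangle$, which holds by Remark~\ref{rem:finite-gen-module} since $G$ generates $\M$.

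The only step requiring any care is verifying that the assignment $F_S(ae_\pi) = a \cdot \M(\pi)(q)$ actually defines a natural transformation of $\FIO$-modules — i.e., that for every morphism $\eps \colon S \to T$ the square $\M(\eps) \circ F_S = F_T \circ \Fo{d}(\eps)$ commutes and is compatible with the $\Ab$-action in the sense of Definition~\ref{def:OI-module}(2). This is where the compatibility diagram in Definition~\ref{def:OI-module} does the real work; the computation reduces to the identity $\M(\eps)(a\cdot \M(\pi)(q)) = \eps^*(a)\cdot \M(\eps \circ \pi)(q)$, which is exactly what the compatibility axiom (applied to the triple $\widetilde{\eps} = \id$, $\eps$, $a$ acting on $\M(\pi)(q)$) provides. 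Apart from this routine check, everything else is a direct translation between generating sets and surjections from free modules.
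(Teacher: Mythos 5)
Your proposal is correct and follows essentially the same route as the paper: the paper's proof likewise uses that $\Fo{d}$ is generated by $e_{\id_{[d]}}$ for the "surjection $\Rightarrow$ generation" direction, and for the converse appeals to the canonical natural transformation determined by a generating set, which is exactly the universal property $F_S(ae_\pi)=a\cdot\M(\pi)(q)$ that you verify in detail. Your extra observation that Lemma~\ref{lem:decompose} is not needed here (since $e_\pi=\Fo{d}(\pi)(e_{\id_{[d]}})$ directly) is also accurate.
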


\begin{proof}
We prove this for $\FIO$-modules, leaving the $\FI$ case to the interested reader.

Since $\Fo{d}$ is generated by the element $e_{\id_{[d]}} \in \Fo{d}_d$, the module $\bigoplus_{\lambda \in \Lambda} \Fo{d_{\lambda}}$ is finitely generated if $\Lambda$ is finite. It is generated in degree $d$ if $d_{\lambda} \le d$ for all
$\lambda \in \Lambda$. Now the corresponding statements for $\M$ follow which shows necessity.

Conversely, a set $G = \{m_{\lambda} \; | \; \lambda \in \Lambda\}$ with $m_{\lambda} \in \M_{n_{\lambda}}$ determines
canonically a natural transformation $\bigoplus_{\lambda \in \Lambda} \Fo{d_{\lambda}} \to \M$. Its image is the module generated by $G$.
\end{proof}

\begin{rem}
\label{rem:graded-mod}
\
\begin{enumerate}
\item
A \emph{$\Z$-graded $\FIO$-module} is an $\FIO$-module $\M$ over a graded $\FIO$-algebra $\Ab$  such that every $\M_S$ is a graded $\Ab_S$-module and every map $\M (\eps)\colon \M_S \to \M_T$ is a graded homomorphism of degree zero.
We will refer to it simply as a graded $\FIO$-module. Similarly, we define a graded $\FI$-module.
\item
There are analogous results for graded modules for each of the above results.
\item
More generally, one can consider $\FIO$- and $\FI$-modules with a more general grading semigroup. We leave this to the interested reader.
\end{enumerate}
\end{rem}

The category $\FIO$ contains subcategories that provide a framework for studying generalizations of $\Inc^e$-invariant filtrations studied in \cite{NR}.

\begin{rem}
For a non-negative integer $e$, denote by $\FIO^e$ the category whose objects are totally ordered finite sets and whose morphisms are order-preserving, injective maps that map the first $e$ elements of the domain onto the first $e$ elements of the codomain.
Note that $\FIO$ is the category $\FIO^0$. Then $\FIO^e$ algebras and $\FIO^e$-modules are defined analogously to the case $e = 0$. Suitably modified, many results of this paper can be extended to $\FIO^e$-modules for all $e \ge 0$.
\end{rem}


\section{Noetherian Algebras and Modules}
\label{sec:noeth-gen}

We now begin to discuss finiteness results for $\FIO$- and $\FI$-modules.

\begin{defn}
Let $\Ab$ be an $\FIO$-algebra over $K$. An $\FIO$-module $\M$ over $\Ab$ is said to be \emph{noetherian} if every $\FIO$-submodule of $\M$ is finitely generated.
The algebra $\Ab$ is \emph{noetherian} if it is a noetherian $\FIO$-module over itself.

Analogously, we define a \emph{noetherian $\FI$-module} and a \emph{noetherian $\FI$-algebra}.
\end{defn}

The following results are shown as in a module category, using the same arguments (see, e.g., \cite[pages 14--15]{Mat} combined with Remark~\ref{rem:finite-gen-module}).

\begin{prop}
\label{prop:char-noeth-module}
Let $\Ab$ be an $\FIO$-algebra over $K$.
For an $\FIO$-module $\M$ over $\Ab$, the following conditions are equivalent:
\begin{enumerate}
\item
$\M$ is noetherian;
\item
Every ascending chain of $\FIO$-submodules of $\M$
\[
\M_1 \subseteq \M_2 \subseteq \cdots
\]
becomes stationary;
\item
Every non-empty set of $\FIO$-submodules of $\M$ has a maximal element.
\end{enumerate}

The analogous equivalences are also true for $\FI$-modules.
\end{prop}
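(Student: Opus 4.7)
The plan is to establish the cycle $(i) \Rightarrow (ii) \Rightarrow (iii) \Rightarrow (i)$, mirroring the classical argument for modules over a ring, with the only novelty being that submodules, unions of chains, and generating sets are now taken in the $\FIO$-sense described in Section~\ref{sec:fio-modules}. The $\FI$-case will be identical modulo replacing $\FIO$ morphisms with $\FI$ morphisms, so I would prove only the $\FIO$ version and remark on the other.

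For $(i) \Rightarrow (ii)$, I would take a chain $\Nb^{(1)} \subseteq \Nb^{(2)} \subseteq \cdots$ of $\FIO$-submodules of $\M$ and form their set-theoretic union pointwise, setting $\Nb_S = \bigcup_i \Nb^{(i)}_S$ for each totally ordered finite $S$. A short check, using that each $\Nb^{(i)}$ is closed under the $\Ab_S$-action and compatible with the transition maps $\M(\eps)$, shows that $\Nb$ is itself an $\FIO$-submodule of $\M$. By hypothesis $\Nb$ is generated by a finite set $G \subset \coprod_n \Nb_n$; each element of $G$ lives in some $\Nb^{(i)}$, so a single index $N$ works for all of $G$, and then Remark~\ref{rem:finite-gen-module} forces $\Nb^{(N)} = \Nb^{(N+j)}$ for every $j \ge 0$.

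For $(ii) \Rightarrow (iii)$, I would argue by contraposition: if a non-empty family $\Sigma$ of $\FIO$-submodules of $\M$ has no maximal element, then recursively picking $\Nb^{(k+1)} \in \Sigma$ strictly containing $\Nb^{(k)}$ produces a strictly ascending chain, contradicting (ii). For $(iii) \Rightarrow (i)$, fix any submodule $\Nb \subseteq \M$ and let $\Sigma$ be the set of all finitely generated $\FIO$-submodules contained in $\Nb$. This set is non-empty (the zero submodule belongs) and by (iii) has a maximal element $\Nb'$. If $\Nb' \ne \Nb$, there exists $n$ with $\Nb'_n \subsetneq \Nb_n$; picking $q \in \Nb_n \setminus \Nb'_n$ and adjoining $q$ to a finite generating set of $\Nb'$ yields a finitely generated $\FIO$-submodule of $\Nb$ strictly larger than $\Nb'$, contradicting maximality.

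The arguments are essentially routine, so I do not expect any genuine obstacle; the only subtlety to flag carefully is verifying that the pointwise union $\Nb_S = \bigcup_i \Nb^{(i)}_S$ assembles into an honest $\FIO$-submodule, i.e.\ that the structural maps $\M(\eps)$ and the $\Ab_S$-scalar multiplications preserve $\Nb$. This is immediate because each $\M(\eps)$ sends $\Nb^{(i)}_S$ into $\Nb^{(i)}_T \subseteq \Nb_T$, and similarly for scalar multiplication. The $\FI$-case is obtained verbatim by replacing $\Hom_{\FIO}$ with $\Hom_{\FI}$ in the appeals to Remark~\ref{rem:finite-gen-module}.
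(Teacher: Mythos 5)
Your proof is correct and is essentially the paper's own argument: the paper does not spell out a proof but simply invokes the classical module-theoretic cycle (i)$\Rightarrow$(ii)$\Rightarrow$(iii)$\Rightarrow$(i), adapted pointwise via Remark~\ref{rem:finite-gen-module}, which is exactly what you have written out.
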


\begin{cor}
If $\Ab$ is a noetherian $\Z$-graded $\FIO$-algebra (or $\FI$-algebra, respectively), then so is its $e$-th Veronese subalgebra for every $e \in \N$.
\end{cor}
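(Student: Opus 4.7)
My plan is to verify the ascending chain condition on $\FIO$-submodules of $\Ab^{(e)}$ via Proposition~\ref{prop:char-noeth-module}, mimicking the classical argument that the Veronese subring of a graded noetherian ring is noetherian. The idea is to lift a chain in $\Ab^{(e)}$ to a chain in $\Ab$, exploit the noetherianity of $\Ab$, and then recover the original chain via an intersection argument that uses the fact that $\Ab^{(e)}$ is a direct summand of $\Ab$ as an $\Ab^{(e)}$-module.

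Concretely, given an ascending chain $\Ic_1 \subseteq \Ic_2 \subseteq \cdots$ of $\FIO$-submodules of $\Ab^{(e)}$, I would form the pointwise extension $\Jc_i$ inside $\Ab$ by setting $(\Jc_i)_S := \Ab_S \cdot (\Ic_i)_S$ for every totally ordered finite set $S$. A short check---using multiplicativity of $\eps^*$ for every $\FIO$-morphism $\eps\colon S \to T$ together with the $\FIO$-invariance of each $\Ic_i$---shows that each $\Jc_i$ is indeed an $\FIO$-submodule of $\Ab$. Since $\Ab$ is noetherian, the chain $\Jc_1 \subseteq \Jc_2 \subseteq \cdots$ stabilizes.

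The crucial and most delicate step, which I expect to be the main obstacle, is to show that the original chain can be recovered pointwise as $(\Ic_i)_S = (\Jc_i)_S \cap \Ab^{(e)}_S$; once this equality is in hand, stabilization of the $\Jc_i$'s immediately yields stabilization of the $\Ic_i$'s. For this, I would use the decomposition $\Ab_S = \Ab^{(e)}_S \oplus M_S$ of $\Ab_S$ as an $\Ab^{(e)}_S$-module, where $M_S := \bigoplus_{r=1}^{e-1} \bigoplus_{j \in \Z} [\Ab_S]_{je+r}$. Since multiplication preserves the residue of the degree modulo $e$ and $(\Ic_i)_S \subseteq \Ab^{(e)}_S$ consists of elements of degree divisible by $e$, one has $M_S \cdot (\Ic_i)_S \subseteq M_S$. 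Consequently $(\Jc_i)_S = \Ab_S \cdot (\Ic_i)_S = (\Ic_i)_S + M_S \cdot (\Ic_i)_S$, and this meets $\Ab^{(e)}_S$ precisely in $(\Ic_i)_S$ because $M_S \cap \Ab^{(e)}_S = 0$. The $\FI$-case is carried out by the same argument after substituting $\FI$-morphisms for $\FIO$-morphisms throughout.
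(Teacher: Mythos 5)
Your proposal is correct and follows essentially the same route as the paper: extend an ascending chain in $\Ab^{(e)}$ to one in $\Ab$, invoke noetherianity of $\Ab$ to stabilize it, and recover the original chain by contraction. The only difference is that you flesh out the contraction step (via the degree-mod-$e$ direct-sum decomposition $\Ab_S = \Ab^{(e)}_S \oplus M_S$), which the paper asserts without elaboration.
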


\begin{proof}
Consider any ascending chain of ideals of $\Ab^{(e)}$. Its extension ideals in $\Ab$ also form an ascending chain, which stabilizes by assumption on $\Ab$. Restricting these ideals to $\Ab^{(e)}$ gives the original chain or ideals.
\end{proof}

Using that the categories of $\FIO$-modules and of $\FI$-modules are abelian it also follows (see, e.g., \cite[Theorem 3.1]{Mat}):

\begin{prop}
\label{prop:noeth-along-sequences}
Consider a short exact sequence of $\FIO$-modules (or $\FI$-modules, respectively) over an $\FIO$-algebra $\Ab$ (or an $\FI$-algebra $\Ab$, respectively)
\[
0 \longrightarrow \M' \longrightarrow \M
\longrightarrow \M''\longrightarrow 0.
\]
Then $M$ is a noetherian if and only if $M'$ and $M''$
are noetherian.
\end{prop}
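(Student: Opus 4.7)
The plan is to invoke Proposition~\ref{prop:char-noeth-module} and verify the ascending chain condition, thereby transporting the classical short-exact-sequence argument from ordinary modules to the abelian categories $\FIO$-$\Mod(\Ab)$ and $\FI$-$\Mod(\Ab)$. The key point is that submodules, intersections, quotients, and images in these categories are defined pointwise in $K$-$\Mod$ (see Remark~\ref{rem:abelian-cat-submodules}(iii)), so every manipulation can be performed at each object $S$ in $K$-$\Mod$. I will write the argument for $\FIO$-modules; the $\FI$ case is verbatim.

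For the forward direction, let $\pi \colon \M \to \M''$ denote the given surjection. Any ascending chain of submodules of $\M'$ is, via the inclusion $\M' \hookrightarrow \M$, an ascending chain in $\M$, and hence stabilizes. For $\M''$, one lifts a chain $\Nc_1'' \subseteq \Nc_2'' \subseteq \cdots$ of submodules to its preimages $\pi^{-1}(\Nc_i'') \subseteq \M$; these stabilize in $\M$, and applying $\pi$ pointwise (which is surjective on each $S$) recovers stabilization of the original chain.

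For the converse, assume $\M'$ and $\M''$ are noetherian and consider an ascending chain $\Nc_1 \subseteq \Nc_2 \subseteq \cdots$ in $\M$. Form the two auxiliary chains
\[
\{\Nc_i \cap \M'\}_{i \ge 1} \text{ in } \M' \quad \text{and} \quad \{\pi(\Nc_i)\}_{i \ge 1} \text{ in } \M''.
\]
By hypothesis both stabilize, so one may choose $N$ such that they are constant for $i \ge N$. A standard diagram chase, carried out pointwise at each totally ordered finite set $S$, then shows $\Nc_i = \Nc_N$ for $i \ge N$: given $q \in (\Nc_i)_S$, pick $q_0 \in (\Nc_N)_S$ with $\pi_S(q_0) = \pi_S(q)$; then $q - q_0 \in (\Nc_i \cap \M')_S = (\Nc_N \cap \M')_S$, whence $q \in (\Nc_N)_S$.

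There is no substantial obstacle in this argument. The only thing that genuinely needs to be checked is that the operations of intersection with $\M'$ and image under $\pi$ produce $\FIO$-submodules, which is immediate from the pointwise definition of subobjects in an abelian category of functors valued in $K$-$\Mod$.
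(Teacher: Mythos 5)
Your proof is correct and is essentially the paper's argument: the paper simply cites the classical module-theoretic proof (via the abelian, pointwise-defined structure of $\FIO$-$\Mod(\Ab)$ and $\FI$-$\Mod(\Ab)$, referring to \cite[Theorem 3.1]{Mat}), while you spell out that same ACC diagram chase explicitly using Proposition~\ref{prop:char-noeth-module}.
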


In particular, direct sums of noetherian $\FIO$-modules over $\Ab$ are again noetherian.

\begin{rem}
The above results lead to the question whether every finitely generated $\FIO$-module over a noetherian $\FIO$-algebra $\Ab$ is noetherian and similarly over an $\FI$-algebra Recall that there are free modules over $\Ab$ other than direct sums of copies of $\Ab$ (see  Remark~\ref{rem:free}). By Proposition~\ref{prop:finite-gen}, the question has an affirmative answer if and only if every module $\Fo{d}$ ($d \ge 0$) is noetherian. We will see later that this is indeed the case over $\XI{1}$ and $\XO{1}$.
\end{rem}

\begin{thm}
\label{thm:noeth-and-limits}
Let $\M$ be an $\FIO$-module over an $\FIO$-algebra $\Ab$
(or an $\FI$-module over an $\FI$-algebra $\Ab$, respectively), and let $\Pi = \Inc$ (or $\Pi = \Sym (\infty)$, respectively).

 If $\M$ is noetherian, then $\lim \M$ is \emph{$\Pi$-noetherian}, that is, every $\Pi$-invariant $K[\Ab]$-submodule of $\lim \M$ can be generated by finitely many $\Pi$-orbits.
\end{thm}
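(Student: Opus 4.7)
The plan is to pull back an arbitrary $\Pi$-invariant $K[\Ab]$-submodule $N$ of $\lim\M$ to an $\FIO$-submodule (respectively $\FI$-submodule) $\Nb$ of $\M$, apply the noetherianity hypothesis to $\Nb$, and then push forward a finite generating set of $\Nb$ to produce finitely many $\Pi$-orbits that generate $N$. I will describe the $\FIO$ case only; the $\FI$ case is parallel, substituting the $\Sym(\infty)$-statement of Lemma~\ref{lem:compatible-structures-mod} for the $\Inc$-statement, and invoking Proposition~\ref{prop:fin-gen-to-limit} in its $\FI$ form.

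For each $n \in \N_0$, set $\Nb_n := \phi_n^{-1}(N) \subseteq \M_n$, where $\phi_n\colon \M_n \to \lim\M$ is the canonical map into the colimit. To see that this assignment defines an $\FIO$-submodule of $\M$, I first check compatibility with $\FIO$-morphisms: for every $\eps \in \Hom_{\FIO}([m],[n])$, Lemma~\ref{lem:compatible-structures-mod} supplies some $\pi_\eps \in \Inc$ with $[\M(\eps)(q)] = \pi_\eps \cdot [q]$ for all $q \in \M_m$. If $q \in \Nb_m$, then $[q] \in N$, so $\Pi$-invariance of $N$ yields $\pi_\eps \cdot [q] \in N$, that is, $\M(\eps)(q) \in \Nb_n$. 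Closure under the $\Ab_n$-action is immediate: for $a \in \Ab_n$ and $q \in \Nb_n$ one has $\phi_n(aq) = [a] \cdot [q] \in N$ because $N$ is a $K[\Ab]$-submodule, so $aq \in \Nb_n$.

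Since $\M$ is noetherian, $\Nb$ is finitely generated, and Proposition~\ref{prop:fin-gen-to-limit} then guarantees that $\lim\Nb$ is generated as a $K[\Ab]$-module by the $\Pi$-orbits of finitely many elements. It remains to identify $\lim\Nb$ with $N$ inside $\lim\M$. The inclusion of $\FIO$-modules $\Nb \hookrightarrow \M$ induces a map $\lim\Nb \to \lim\M$, and this map is injective because filtered colimits of injective maps of $K$-modules remain injective. Its image equals $\bigcup_n \phi_n(\Nb_n) = \bigcup_n \bigl(N \cap \phi_n(\M_n)\bigr) = N \cap \bigcup_n \phi_n(\M_n) = N$, using that $\lim\M = \bigcup_n \phi_n(\M_n)$. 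Transporting the finitely many $\Pi$-orbit generators of $\lim\Nb$ through this identification produces the desired generating set for $N$.

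The main technical point is verifying that the pullback $\Nb$ is genuinely closed under \emph{all} $\FIO$-morphisms, not merely the inclusions $\iota_{m,n}$. This is exactly where Lemma~\ref{lem:compatible-structures-mod} is indispensable: it encodes the fact that every $\FIO$-morphism acts on the colimit via some element of $\Inc$, so that the hypothesis of $\Pi$-invariance on $N$ translates precisely into $\FIO$-stability of $\Nb$. Once this functoriality step is in place, the remainder of the argument is formal, combining Proposition~\ref{prop:fin-gen-to-limit} with the standard exactness of filtered colimits of $K$-modules.
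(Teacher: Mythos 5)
Your proposal is correct and follows essentially the same route as the paper: pull $N$ back along the maps $\phi_n$ to get an $\FIO$- (resp.\ $\FI$-) submodule of $\M$, using Lemma~\ref{lem:compatible-structures-mod} together with $\Pi$-invariance of $N$ to verify closure under all morphisms, then apply noetherianity and Proposition~\ref{prop:fin-gen-to-limit} after identifying the colimit of the pullback with $N$. Your write-up merely spells out a few steps (closure under the $\Ab_n$-action, injectivity and image of $\lim\Nb \to \lim\M$) that the paper leaves implicit.
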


\begin{proof}
Let $\Nc \subset \lim \M$ be an $\Inc$-invariant $K[\Ab]$-submodule. We have to show that $\Nc$ can be generated by finitely many $\Inc$-orbits.

To this end, we define a functor $\Nb\colon \FIO \to K$-$\Mod$. For every $k \in \N_0$, set $\Nc_k = \Nc \cap \M_k$. Moreover, we put $\Nb_S = \Nb_{|S|}$ for every totally ordered finite set $S$. Finally, let $\Nb (\eps_{S, T}) = \Nb(\eps_{|S|, |T|})$ for every $\eps_{S, T} \in \Hom_{\FIO} (S, T)$, where $\Nb(\eps_{|S|, |T|})$ is defined by $\Nb(\eps_{|S|, |T|}) (q) = \M(\eps_{|S|, |T|}) (q)$ for $q \in \Nb_{|S|}$. The $\Inc$-invariance of $\Nc$ and Lemma~\ref{lem:compatible-structures-mod} give that $\Nb(\eps_{|S|, |T|}) (q) \in \Nb_{|T|}$. Now it follows that $\Nb$ is an $\FIO$-module.

Note that $\lim \Nb \subseteq \lim \M$. Thus, we get by construction $\lim \Nb = \Nc$. By assumption $\Nb$ is finitely generated. We conclude by Proposition~\ref{prop:fin-gen-to-limit}.

The arguments in the case of an $\FI$-module are analogous.
\end{proof}

\begin{cor}
\label{cor:noeth-limit}
Let $\Ab$ be an $\FIO$-algebra
(or an  $\FI$-algebra, respectively), and let $\Pi = \Inc$ (or $\Pi = \Sym (\infty)$, respectively).

 If $\Ab$ noetherian, then $K[\Ab]$ is $\Pi$-noetherian, that is, every $\Pi$-invariant ideal of $K[\Ab]$ can be generated by finitely many $\Pi$-orbits.
\end{cor}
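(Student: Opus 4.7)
The plan is to recognize this corollary as a direct specialization of Theorem~\ref{thm:noeth-and-limits}. The key observation is that an $\FIO$-algebra $\Ab$ can be viewed as an $\FIO$-module over itself: in the language of Definition~\ref{def:free}, it is precisely the free module $\Fo{0}$. By definition, $\Ab$ being a noetherian $\FIO$-algebra means exactly that every $\FIO$-submodule of $\Ab$ (i.e., every ideal) is finitely generated, so $\Ab$ is a noetherian $\FIO$-module over itself.

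Next, I would identify the colimits. By Definition~\ref{def:limit-module} applied to $\M = \Ab$, the colimit $\lim \Ab$ (as an $\FIO$-module) is constructed via the same direct system $(\Ab_n, \iota_{m,n}^*)$ used in Definition~\ref{def:limit} to form the $K$-algebra colimit $K[\Ab]$. Hence $\lim \Ab = K[\Ab]$ as $K[\Ab]$-modules, and the $\Inc$-action on $\lim \Ab$ induced by the $\FIO$-module structure coincides with the $\Inc$-action on $K[\Ab]$ from Proposition~\ref{prop:action-on-limits}. Under this identification, $K[\Ab]$-submodules of $\lim \Ab$ are exactly ideals of $K[\Ab]$, and the $\Inc$-invariant such submodules are exactly the $\Inc$-invariant ideals.

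Applying Theorem~\ref{thm:noeth-and-limits} to the noetherian $\FIO$-module $\M = \Ab$ yields that $\lim \Ab = K[\Ab]$ is $\Inc$-noetherian, which is the desired conclusion in the $\FIO$ case. The $\FI$ case is handled identically: view $\Ab$ as the free $\FI$-module $\Fi{0}$ over itself, use the analogous identification $\lim \Ab = K[\Ab]$ with compatible $\Sym(\infty)$-actions, and apply the $\FI$ half of Theorem~\ref{thm:noeth-and-limits} to deduce that every $\Sym(\infty)$-invariant ideal of $K[\Ab]$ is generated by finitely many $\Sym(\infty)$-orbits.

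Since the argument is a direct reduction, no serious obstacle arises; the only point requiring a moment of care is checking that the two $\Pi$-actions in play (one on the module-theoretic colimit, one on the algebra-theoretic colimit) genuinely agree, which is immediate from the parallel formulas $\pi \cdot [a] = [\eps_{\pi,m}^*(a)]$ in Proposition~\ref{prop:action-on-limits} and $\pi \cdot [q] = [\M(\eps_{\pi,m})(q)]$ for the module, since the module structure on $\Ab$ over itself is given by its own algebra maps.
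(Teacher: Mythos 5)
Your proposal is correct and matches the paper's intent: the corollary is stated without proof precisely because it is the immediate specialization of Theorem~\ref{thm:noeth-and-limits} to $\M = \Ab$ viewed as a module over itself, with ideals of $\Ab$ being its $\FIO$- (resp.\ $\FI$-) submodules and $\lim \Ab = K[\Ab]$ with compatible $\Pi$-actions. Your verification that the two colimit constructions and $\Pi$-actions coincide is exactly the (routine) point the paper leaves implicit.
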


In both results above, it would be interesting to identify instances in which the converse is true.

We now consider the noetherian property for $\FIO$- and $\FI$-algebras.
As in the classical case, any such noetherian algebra is finitely generated. The converse is not true.

As preparation, we note:

\begin{prop}
Assume $K$ is field. Let $d \ge 0$ be an integer.
Then $\XO{d}$ is noetherian if and only if $d \in \{0, 1\}$.

The analogous statement is true for $\XI{d}$.
\end{prop}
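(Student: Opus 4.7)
The argument splits by the value of $d$. For $d = 0$, every $\XO{0}_n$ equals $K$, so every $\FIO$-ideal is trivially finitely generated. The case $d = 1$ should follow from the Gröbner basis machinery of Section~\ref{sec:groebner}, specifically Theorem~\ref{thm:finite-G-basis}, applied to $\XO{1}$ viewed as the free $\FIO$-module $\Fo{0}$ over itself; this is essentially the $\FIO$-counterpart of the Hillar--Sullivant $\Inc$-noetherianity of $K[x_1,x_2,\ldots]$ (cf.\ \cite{HS}). The real work is the ``only if'' direction for $d \ge 2$.

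For $d \ge 2$, my plan is to exhibit an infinite antichain of monomials under the orbit-divisibility preorder, where $m \preceq m'$ means $\eps^*(m)$ divides $m'$ for some $\FIO$-morphism $\eps$; any such antichain generates a non-finitely-generated monomial $\FIO$-ideal, witnessing the failure of noetherianity. For $d = 2$, I would take the \emph{cycle monomials}
\[
C_k = x_{1,2} \cdot x_{2,3} \cdots x_{k-1,k} \cdot x_{1, k} \in \XO{2}_k \qquad (k \ge 3),
\]
representing the edges of the $k$-cycle on $[k]$. Suppose $\eps \colon [j] \to [k]$ is order-preserving with $\eps^*(C_j)$ dividing $C_k$ and $3 \le j < k$; set $a_i = \eps(i)$. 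Each consecutive factor $x_{a_i, a_{i+1}}$ of $\eps^*(C_j)$ must be an edge of $C_k$, and monotonicity of $\eps$ rules out the wrap-around edge $x_{1,k}$ as soon as $j \ge 3$. Thus $a_{i+1} = a_i + 1$ throughout, forcing $\eps$ to be a translation of a consecutive block of $[k]$. The remaining wrap-around factor $x_{a_1, a_j} = x_{a_1, a_1 + j - 1}$ of $\eps^*(C_j)$ must then still be an edge of $C_k$, and the only possibility compatible with $a_j - a_1 = j-1 \ge 2$ is $(a_1, a_j) = (1, k)$, whence $j = k$, a contradiction. So $\{C_k : k \ge 3\}$ is an antichain and $\XO{2}$ is not noetherian.

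For $d \ge 3$, I would replace $C_k$ by the $d$-uniform tight cycle $C_k^{(d)} = \prod_{i=1}^k x_{T_i}$ with $T_i = \{i, i+1, \ldots, i+d-1\}$ taken modulo $k$, valid for $k \ge 2d$, and run an analogous rigidity argument. The $\XI{d}$ case uses the same cycle monomials: their antichain property survives the larger group of arbitrary injections, because a (hyper)graph cycle of length $j$ is not a sub-(hyper)graph of a cycle of any other length. The principal obstacle I anticipate is the combinatorial bookkeeping for $d \ge 3$: one must verify that an order-preserving injection sending consecutive $d$-blocks of $[j]$ (including the wrap-around blocks) into consecutive $d$-blocks of $[k]$ is forced to have $j = k$. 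This is plausible but requires a careful case analysis of how the wrap-around hyperedges of the source can land in the target, since the cyclic structure is now more delicate than the simple ``edge-cycle'' picture of the $d = 2$ case.
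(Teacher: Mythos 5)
Your strategy is essentially the paper's. For $d\ge 2$ the paper uses exactly your cycle monomials $u_k=x_{1,2}x_{2,3}\cdots x_{k-1,k}x_{1,k}$ (adapted from \cite[Example 3.8]{HS}): a morphism sends such a monomial to one representing a cycle of the same length, and no cycle contains a strictly shorter cycle as a subgraph, so the chain $\la u_3\ra_{}\subsetneqq\la u_3,u_4\ra_{}\subsetneqq\cdots$ contradicts Proposition~\ref{prop:char-noeth-module}; this single graph-theoretic observation treats $\FIO$ and $\FI$ simultaneously, and your explicit monotonicity analysis for $\XO{2}$ is a correct unwinding of the same fact. For $d=1$ your route differs mildly but is sound and non-circular: the paper (which proves this proposition before Section~\ref{sec:groebner} and assumes $K$ is a field) quotes \cite[Theorem 3.6 and Corollary 3.7]{HS} together with Lemma~\ref{lem:gen-vs-stability}, whereas you invoke Theorem~\ref{thm:finite-G-basis} for ideals of $\XO{1}\cong\Fo{0}$ (and would use Theorem~\ref{thm:fg-gives-noeth-mod}(ii) for $\XI{1}$); the paper itself remarks right after the proposition that this later machinery gives the statement over an arbitrary noetherian $K$, so nothing circular occurs.

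The one genuine gap is the case $d\ge 3$. The paper disposes of it by reducing to $d=2$ (``it is enough to show the claim for $d=2$''), while you propose the tight-cycle monomials $C^{(d)}_k=\prod_{i=1}^k x_{T_i}$, $T_i=\{i,\dots,i+d-1\}$ modulo $k$, and explicitly leave the rigidity verification open; as written your proof is incomplete there. The missing step closes without any case analysis by an incidence count. Suppose $2d\le j<k$, $\eps\colon[j]\to[k]$ is injective, and $\eps^*\bigl(C^{(d)}_j\bigr)$ divides $C^{(d)}_k$. Since $C^{(d)}_k$ is squarefree and $\eps$ is injective, the $j$ image hyperedges $\eps(T_i)$ are pairwise distinct hyperedges of the tight $k$-cycle, and their union is $\eps([j])$, a set of exactly $j$ vertices. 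The chosen hyperedges produce $jd$ vertex--hyperedge incidences, while every vertex of $C^{(d)}_k$ lies in only $d$ hyperedges altogether; hence every vertex of $\eps([j])$ lies in exactly $d$ chosen hyperedges, i.e.\ every hyperedge of $C^{(d)}_k$ meeting $\eps([j])$ must be among the chosen ones. Since consecutive hyperedges of the tight cycle share $d-1\ge 1$ vertices, starting from one chosen hyperedge and going around the cycle forces all $k$ hyperedges to be chosen, contradicting $j<k$. This argument applies verbatim to $\XI{d}$, because equality of variables there forces equality of the underlying $d$-element image sets. With this supplement (or with the paper's reduction to $d=2$) your proof is complete.
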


\begin{proof}
The case $d = 0$ is clear.
Let $d = 1$. Note that an ideal $\Ib$ of $\XO{1}$ is essentially the same as an $\Inc$-invariant filtration (see \cite[Definition 5.1]{NR}) by relating any $\FIO$-morphism $\eps$ to $\pi_{\eps} \in \Inc$ as defined in Equation \eqref{eq:induced-by-eps} and by suitably restricting any $\pi \in \Inc$. By \cite[Theorem 3.6]{HS}, every such filtration stabilizes. Thus, Lemma~\ref{lem:gen-vs-stability} gives that $\Ib$ is generated in finitely many degrees. Since each ring $\XO{1}_n$ is noetherian, $\Ib$ has a finite generating set.

Using \cite[Corollary 3.7]{HS} instead of \cite[Theorem 3.6]{HS}, one gets that $\XI{1}$ is noetherian.

Let $d \ge 2$. It is enough to show the claim for $d = 2$. We adapt \cite[Example 3.8]{HS}) and use the identifications in Remark~\ref{rem:poly-alg}.
For $i \ge 3$, consider the monomial $u_i = x_{1,2} x_{2,3} \cdots x_{i-1,i} x_{1,i}$ representing a cycle on the vertices $1,2,\ldots,i$. Any morphism $\eps\colon S \hookrightarrow T$ maps such a monomial onto a monomial that presents a cycle of the same length. Since no cycle contains a strictly smaller cycle as a subgraph, it follows that none of the image monomials of $u_i$ divides an image of $u_j$ if $i < j$. Thus, we get a strictly increasing sequence of $\FIO$-ideals (or $\FI$-ideals, respectively)
\[
\la u_3 \ra \subsetneqq \la u_3, u_4 \ra \subsetneqq \cdots
\]
We conclude by Proposition \ref{prop:char-noeth-module}.
\end{proof}

We later show (see Theorem \ref{thm:finite-G-basis}) that $\XO{1}$ and $\XI{1}$ are noetherian if $K$ is an arbitrary noetherian ring.

\begin{rem}
Using the above arguments and the results in \cite{HS}, it also follows that the tensor products $(\XO{1})^{\otimes c}$ and $(\XI{1})^{\otimes c}$ are noetherian for every $c \in \N$. This motivates the question whether tensor products of noetherian $\FIO$-algebras and similar constructions always produce noetherian algebras.

\end{rem}


\section{Well-partial-orders}
\label{sec:orders}

We are now going to establish a combinatorial result that we will use when we discuss Gr\"obner bases for $\FIO$-modules. This section can be read independently of other parts of the paper.

Recall that a \emph{well-partial-order} on a set $S$ is a partial order $\le$ such that, for any infinite sequence $s_1,s_2, \ldots$ of elements in $S$, there is a pair of indices $i < j$ such that $s_i \le s_j$.

\begin{rem}
\label{rem:well-partial-order}
\
\begin{enumerate}
\item
If $S$ and $T$ are sets which have well-partial-orders, then it is well known that their Cartesian product $S \times T$ also admits a well-partial order, namely the componentwise partial order defined by $(s,t) \le (s', t')$ if $s \le s'$ and $t \le t'$. The analogous statement is true for finite products. In particular, it follows that the componentwise partial order on $\N_0^c$ is a well-partial-order, a result which is also called Dickson's Lemma.
\item
Given a set $S$ with a partial order $\le$, define a partial order on the set $S^*$ of finite sequences of elements in $S$ by
$(s_1,\ldots,s_p) \le_H (s'_1,\ldots,s'_q)$ if there is a strictly increasing map $\phi\colon [p] \to [q]$ such that $s_i \le s'_{\phi(i)}$ for all $i \in [p]$. This order is called the \emph{Higman order} on $S^*$. It is a well-partial-order by Higman's Lemma (see \cite{H} or, for example, \cite{Draisma}).
\end{enumerate}
\end{rem}

Before defining the relation we are interested in, recall the definition of the sign of an integer $n$:
\[
\sgn (n) = \begin{cases}
1 & \text{if } n > 0; \\
0 & \text{if } n = 0; \\
-1 & \text{if } n < 0.
\end{cases}
\]

\begin{defn}
Let $S$ be any set with a partial order $\le$. For any non-negative integer $d$, define a relation on
$(S \times \N)^{d+1}$ by
\[
(s_0, i_0) \times \cdots \times (s_d, i_d) \preceq (t_0, j_0) \times \cdots \times (t_d, j_d)
\]
if $(s_0,\ldots,s_d) \le (t_0,\ldots,t_d)$ in the componentwise partial order on $S^{d+1}$ and
$\sgn (i_k-i_0) = \sgn (j_k - j_0)$ for each $k = 1,\ldots,d$. (The second condition is empty if $d=0$.)
\end{defn}

This is a partial order on $(S \times \N)^{d+1}$. In fact, more is true.

\begin{prop}
\label{prop:Higman-on-products}
If $\le$ is a well-partial-order on $S$, then $\preceq$ is  a well-partial-order on $(S \times \N)^{d+1}$.

In particular, the induced Higman order $\preceq_H$ on the set $((S \times \N)^{d+1})^*$ is a well-partial-order for every $d \in \N_0$.
\end{prop}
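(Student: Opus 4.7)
The plan is to prove the main statement by a standard pigeonhole-plus-well-partial-order argument, and then to derive the statement about the Higman order directly from Higman's Lemma (Remark~\ref{rem:well-partial-order}(ii)).

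First I would observe that for any element $(s_0, i_0)\times\cdots\times(s_d, i_d)$ of $(S\times\N)^{d+1}$, its associated \emph{sign pattern}
\[
\bigl(\sgn(i_1-i_0),\ldots,\sgn(i_d-i_0)\bigr) \in \{-1,0,1\}^d
\]
takes values in a finite set. Consequently, given any infinite sequence
\[
x_1, x_2, x_3, \ldots \in (S\times\N)^{d+1},
\]
the pigeonhole principle yields an infinite subsequence $x_{n_1}, x_{n_2}, \ldots$ (with $n_1 < n_2 < \cdots$) all of whose elements share the same sign pattern. Along this subsequence, the sign condition in the definition of $\preceq$ is automatically satisfied between any two terms.

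Next I would forget the $\N$-coordinates entirely and project each $x_{n_k}$ to its $S^{d+1}$-component $(s_0^{(k)},\ldots,s_d^{(k)})$. Since $\le$ is a well-partial-order on $S$ by hypothesis, Remark~\ref{rem:well-partial-order}(i) implies that the componentwise order on $S^{d+1}$ is also a well-partial-order. Applied to the infinite sequence of projections, this produces indices $k < \ell$ with $(s_0^{(k)},\ldots,s_d^{(k)}) \le (s_0^{(\ell)},\ldots,s_d^{(\ell)})$ componentwise. Combined with the common sign pattern of $x_{n_k}$ and $x_{n_\ell}$, this yields $x_{n_k} \preceq x_{n_\ell}$ with $n_k < n_\ell$, which proves that $\preceq$ is a well-partial-order on $(S\times \N)^{d+1}$.

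For the final assertion about $\preceq_H$, I would simply invoke Higman's Lemma as stated in Remark~\ref{rem:well-partial-order}(ii), applied to the set $(S\times\N)^{d+1}$ equipped with the well-partial-order $\preceq$ just established. This immediately gives that the induced Higman order $\preceq_H$ on $\bigl((S\times\N)^{d+1}\bigr)^*$ is a well-partial-order. I do not anticipate a genuine obstacle here; the only mild subtlety is keeping the two reductions (pigeonhole on sign patterns, then well-partial-order on $S^{d+1}$) in the correct order, since the sign condition is not itself a well-partial-order-like condition but becomes trivial once sign patterns are fixed.
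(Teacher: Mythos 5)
Your proof is correct, and it uses the same two ingredients as the paper (finiteness of the sign data, plus the fact from Remark~\ref{rem:well-partial-order}(i) that the componentwise order on $S^{d+1}$ is a well-partial-order, with Higman's Lemma handling the second assertion). The only difference is the order of the two reductions: the paper first passes to an infinite subsequence along which the $S^{d+1}$-components form an increasing chain, invoking the nontrivial refinement (cited from Kruskal) that every sequence in a well-partial-ordered set admits an infinite weakly increasing subsequence, and only then applies pigeonhole to the sign data within that subsequence. You instead pigeonhole first on the finitely many sign patterns in $\{-1,0,1\}^d$ and then need only a single good pair from the well-partial-order on $S^{d+1}$, so your argument gets by with the bare definition of a well-partial-order and is in that sense slightly more elementary and self-contained; the paper's ordering of steps costs an extra citation but makes the sign-comparison step immediate. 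Both yield the same conclusion, and your handling of the Higman-order consequence matches the paper exactly.
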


\begin{proof}
The second part follows from the first one by Remark \ref{rem:well-partial-order}(ii). Thus, it is enough to show the first assertion.

To this end consider any infinite sequence $t_1, t_2, \ldots$ of elements in $(S \times \N)^{d+1}$, where $t_k = (s_{k, 0}, i_{k, 0}) \times \cdots \times (s_{k, d}, i_{k, d})$. Since the componentwise partial order on $S^{d+1}$ is a well-partial-order, it is well-known (see, e.g., \cite[page 298]{Kru-72}) that there is an infinite
sequence $n_1 < n_2 < \cdots$ of positive integers such that
\[
(s_{n_1, 0},\ldots,s_{n_1,d}) \le (s_{n_2, 0},\ldots,s_{n_2, d}) \le \cdots.
\]
Consider now the corresponding infinite subsequence of the original sequence of elements in $(S \times \N)^{d+1}$:
\[
(s_{n_1, 0}, i_{n_1, 0}) \times \cdots \times (s_{n_1, d}, i_{n_1, d}), \
(s_{n_2, 0}, i_{n_2, 0}) \times \cdots \times (s_{n_2, d}, i_{n_2, d}), \ \ldots.
\]
Since there are only $9$ different pairs $(\sgn(m), \sgn (n))$ for integers $m, n$, there must be positive integers $n_k < n_l$ such that
\[
(s_{n_k, 0}, i_{n_k, 0}) \times \cdots \times (s_{n_k, d}, i_{n_k, d}) \preceq
(s_{n_l, 0}, i_{n_l, 0}) \times \cdots \times (s_{n_l, d}, i_{n_l, d}),
\]
that is, $t_{n_k} \preceq t_{n_l}$. Hence $\preceq$ is a well-partial-order.
\end{proof}


\section{Gr\"obner Bases of $\FIO$-modules}
\label{sec:groebner}

Throughout this section we fix a positive integer $c$ and
 consider modules over $\Pb \cong (\XO{1})^{\otimes c}$ and $(\XI{1})^{\otimes c}$, respectively, over an arbitrary noetherian ring $K$.

First, we study free modules over $\Pb$. Recall that (see Example~\ref{exa:OI-alg}):
\[
\Pb_m = K[x_{i, j} \; : \; i \in [c], \ j \in [m]]
\]
for every $m \in \N_0$, and $\eps^* (x_{i, j}) = x_{i, \eps(j)}$ for $i \in [c], \ j \in [m]$ and  $\eps \in \Hom_{\FIO} ([m], [n])$.
Furthermore, for each integer $d \ge 0$, the free module $\Fo{d}$ over $\Pb$ is given by
\[
\Fo{d}_m = \oplus_{\pi} \Pb_m e_{\pi} \cong (\Pb_m)^{\binom{m}{d}},
\]
where the sum is taken over all $\pi \in \Hom_{\FIO} ([d], [m])$,
and
\[
\Fo{d}(\eps)\colon \Fo{d}_m \to \Fo{d}_n, \ a e_{\pi} \mapsto \eps^*(a) e_{\eps \circ \pi},
\]
where $a \in \Pb_m$ and $\eps \in \Hom_{\FIO} ([m], [n])$.

A \emph{monomial} in $\Fo{d}$ is an
element of some $\Fo{d}_m$ of the form
\[
x_{\lpnt, 1}^{u_1} \cdots x_{\lpnt, m}^{u_m}e_{\pi}, \quad \text{ where } \pi \in \Hom_{\FIO} ([d], [m]) \text{ for some } m, \; u_j \in \N_0^c,
\]
 the $i$-th entry of $u_j \in \N_0^c$ is the exponent of the variable $x_{i, j}$, and $x_{\lpnt, j}^{u_j}$ is the product of these powers. We want to show that the monomials in $\Fo{d}$ admit a well-partial-order.

We consider a divisibility relation that is compatible with the $\FIO$-module structure.

\begin{defn}
\label{def:FIO-div}
A monomial $\nu = x_{\lpnt, 1}^{v_1} \cdots x_{\lpnt, n}^{v_n}e_{\rho} \in \Fo{d}_n$ is said to be \emph{$\FIO$-divisible} by
a monomial $\mu = x_{\lpnt, 1}^{u_1} \cdots x_{\lpnt, m}^{u_m}e_{\pi} \in \Fo{d}_m$ if there is an $\eps \in \Hom_{\FIO} ([m], [n])$ such that $\Fo{d} (\eps) (\mu)$ divides $\nu$ in $\Fo{d}_n$, that is,
$\eps^* (x_{\lpnt, 1}^{u_1} \cdots x_{\lpnt, m}^{u_m})$ divides $x_{\lpnt, 1}^{v_1} \cdots x_{\lpnt, n}^{v_n}$ in $\Pb_n$ and
$\rho = \eps \circ \pi$. In this case we write $\mu \big |_{\FIO} \nu$.
\end{defn}

It is worth writing out this definition more explicitly. We get
\begin{equation}
\label{eq:def-divisible}
\begin{split}
x_{\lpnt, 1}^{u_1} \cdots x_{\lpnt, m}^{u_m}e_{\pi} \big |_{\FIO} x_{\lpnt, 1}^{v_1} \cdots x_{\lpnt, n}^{v_n}e_{\rho} & \text{ if and only if there is some } \eps \in \Hom_{\FIO} ([m], [n]) \\
& \; \text{such that } \rho = \eps \circ \pi \text{ and } u_i \le v_{\eps (i)} \text{ for each } i \in [m].
\end{split}
\end{equation}

$\FIO$-divisibility is certainly a partial order. In fact, as an application of the results in the previous section, we show that more is true.

\begin{prop}
\label{prop:monomials-in-Fd}
For each $d \in \N_0$, $\FIO$-divisibility is a well-partial-order on the set of monomials in $\Fo{d}$.
\end{prop}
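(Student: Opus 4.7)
The plan is to reduce the statement to Proposition~\ref{prop:Higman-on-products} by encoding each monomial as a finite sequence in $((S \times \N)^{d+1})^*$ and then identifying $\FIO$-divisibility with the Higman order on that set. Take $S = \N_0^c$ equipped with the componentwise partial order, which is a well-partial-order by Dickson's Lemma (Remark~\ref{rem:well-partial-order}(i)). To each monomial $\mu = x_{\lpnt, 1}^{u_1} \cdots x_{\lpnt, m}^{u_m} e_\pi \in \Fo{d}_m$ I associate the sequence
\[
t(\mu) = (t_1, \ldots, t_m) \in ((S \times \N)^{d+1})^*, \qquad t_j := (u_j, j) \times (0, \pi(1)) \times \cdots \times (0, \pi(d)),
\]
where $0$ denotes the zero vector of $S = \N_0^c$. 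This assignment is injective, because the length of $t(\mu)$ recovers $m$, the first $S$-coordinates of the $t_j$ recover $u_1, \ldots, u_m$, and the remaining $\N$-coordinates of $t_1$ recover $\pi$.

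The core step is to verify that $\mu \big|_{\FIO} \nu$ if and only if $t(\mu) \preceq_H t(\nu)$. Writing $\nu = x_{\lpnt, 1}^{v_1} \cdots x_{\lpnt, n}^{v_n} e_\rho$ and $t(\nu) = (t'_1, \ldots, t'_n)$, suppose first $\mu \big|_{\FIO} \nu$ via $\eps \in \Hom_{\FIO}([m], [n])$. Then $\eps$ is strictly increasing, $u_j \le v_{\eps(j)}$ for each $j$, and $\rho = \eps \circ \pi$, so
\[
\sgn(\rho(k) - \eps(j)) = \sgn(\eps(\pi(k)) - \eps(j)) = \sgn(\pi(k) - j)
\]
for all $j \in [m]$ and $k \in [d]$. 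This yields $t_j \preceq t'_{\eps(j)}$ in $(S \times \N)^{d+1}$, so $\eps$ itself witnesses $t(\mu) \preceq_H t(\nu)$. Conversely, suppose $t(\mu) \preceq_H t(\nu)$ via a strictly increasing $\eps \colon [m] \to [n]$. Componentwise comparison of the $S$-coordinates yields $u_j \le v_{\eps(j)}$, while specializing the sign condition to $j = \pi(k) \in [m]$ forces $\sgn(\rho(k) - \eps(\pi(k))) = \sgn(\pi(k) - \pi(k)) = 0$, so $\rho(k) = \eps(\pi(k))$ for every $k \in [d]$. Hence $\rho = \eps \circ \pi$, and together with the divisibility of exponents this gives $\mu \big|_{\FIO} \nu$.

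With the equivalence in hand, the conclusion is immediate: for any infinite sequence of monomials $\mu_1, \mu_2, \ldots$ in $\Fo{d}$, Proposition~\ref{prop:Higman-on-products} supplies indices $i < j$ with $t(\mu_i) \preceq_H t(\mu_j)$, hence $\mu_i \big|_{\FIO} \mu_j$. I expect the main subtle point to be the reverse direction of the equivalence above: the sign conditions in the definition of $\preceq$ look individually rather weak, but evaluating them at the particular values $j = \pi(k)$ is precisely what recovers the factorization $\rho = \eps \circ \pi$ from the $\FIO$-morphism $\eps$ produced by the Higman order.
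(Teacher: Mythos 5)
Your proof is correct and follows essentially the same route as the paper: encode each monomial as an element of $((\N_0^c \times \N)^{d+1})^*$, apply Proposition~\ref{prop:Higman-on-products}, and recover $\rho = \eps \circ \pi$ from the sign conditions evaluated at $j = \pi(k)$. The only cosmetic differences are that the paper pads the tail coordinates with the exponent vectors $u_{\pi(k)}$ instead of zero vectors, and that it proves only the implication actually needed (a Higman comparison yields $\FIO$-divisibility) rather than the full equivalence and injectivity of the encoding.
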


\begin{proof}
We want to apply Proposition \ref{prop:Higman-on-products} to $S = \N_0^{c}$. To this end we encode a
monomial $\mu = x_{\lpnt, 1}^{u_1} \cdots x_{\lpnt, m}^{u_m}e_{\pi}$ as a sequence of $m$ elements
in $(\N_0^{c} \times \N) \times (\N_0^c \times \N)^d$, namely
\[
s(\mu) = \big ((u_1, 1) \times s' (\mu), (u_2, 2) \times s' (\mu),\ldots, (u_m, m) \times s' (\mu) \big ) \in ((\N_0^c \times \N)^{d+1})^*,
\]
where
\[
s' (\mu) = (u_{\pi (1)}, \pi (1)) \times \cdots \times (u_{\pi (d)}, \pi (d)) \in (\N_0^c \times \N)^d.
\]

Consider any infinite sequence $\mu_1, \mu_2, \ldots $ of monomials in $\Fo{d}$. Proposition \ref{prop:Higman-on-products} shows that there are indices $p < q$ such that the encoded monomials satisfy $s( \mu_p) \preceq_H s (\mu_q)$. To simplify notation write
\[
\mu_p = x_{\lpnt, 1}^{u_1} \cdots x_{\lpnt, m}^{u_m}e_{\pi} \; \text{ and } \; \mu_q = x_{\lpnt, 1}^{v_1} \cdots x_{\lpnt, n}^{v_n}e_{\rho}.
\]
The relation $s( \mu_p) \preceq_H s (\mu_q)$ means that there is a strictly
increasing map $\eps\colon [m] \to [n]$ such that, for every $i \in [m]$, one has
\[
(u_i,u_{\pi (1)},\ldots,u_{\pi (d)}) \le (v_{\eps (i)}, v_{\rho (1)},\ldots,v_{\rho (d)})
\]
and
\[
\sgn (\pi (j) - i) = \sgn (\rho (j) - \eps (i)) \text{ for } j \in [d].
\]
In particular, this gives $u_i \le v_{\eps(i)}$ for every
$i \in [m]$. Furthermore, if $i = \pi (j)$, then we get $\rho (j) = \eps (i) = \eps (\pi (j))$, which proves $\rho = \eps \circ \pi$. Comparing with \eqref{eq:def-divisible}, we conclude that
$\mu_p$ $\FIO$-divides $\mu_q$, which completes the argument.
\end{proof}

Now we want to develop Gr\"obner bases theory for submodules of free modules over $\Pb$. We adapt some ideas in \cite{AH}. Fix some integer $d \ge 0$, and consider the free $\FIO$-module $\Fo{d}$. We need a suitable order on the monomials.

\begin{defn}
A \emph{monomial order} on $\Fo{d}$ is a total order $>$ on the monomials of $\Fo{d}$ such that
 if $\mu, \nu$ are monomials in $\Fo{d}_m$ , then $\mu > \nu$ implies:
\begin{enumerate}
\item $u \mu > u \nu > \nu$ for every monomial $u \neq 1$ in $\Pb_m$;

\item $\Fo{d} (\eps) (\mu) > \Fo{d} (\eps) (\nu)$ for every $\eps \in \Hom_{\FIO} ([m], [n])$; \quad and

\item $\Fo{d} (\iota_{m, n}) (\mu) > \mu$ whenever $n > m$.
\end{enumerate}

\end{defn}

Such orders exist.

\begin{ex}
Order the monomials in every polynomial ring $\Pb_m$ lexicographically with $x_{i, j} > x_{i', j'}$ if either $i < i'$ or $i = i'$ and $j < j'$. Identify a monomial $e_{\pi} \in \Fo{d}_m$ with a vector $(m, \pi (1),\ldots,\pi (d)) \in \N^{d+1}$ and order such monomials by using the lexicographic order on $\N^{d+1}$. For example, this implies that every $e_{\pi} \in \Fo{d}_m$ is smaller than any $e_{\tilde{\pi}} \in \Fo{d}_n$ if $m < n$.

Finally, for monomials $u e_{\pi}$ and $v e_{\tilde{\pi}}$, define $u e_{\pi} > v e_{\tilde{\pi}}$ if either $e_{\pi} > e_{\tilde{\pi}}$ or $e_{\pi} = e_{\tilde{\pi}}$ and $u > v$ in $\Pb_m$, where $e_{\pi}, e_{\tilde{\pi}} \in \Fo{d}_m$. One checks that this gives indeed a monomial order on $\Fo{d}$.
\end{ex}

Observe that every monomial order on $\Fo{d}$ refines the partial order defined by $\FIO$-divisibility. Thus,
Proposition \ref{prop:monomials-in-Fd} has the following consequence.

\begin{cor}
\label{cor:unique-min-element}
Fix any monomial order $>$ on $\Fo{d}$. Every non-empty set of monomials of $\Fo{d}$ has a unique minimal element in the order $>$.
\end{cor}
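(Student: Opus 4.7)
The plan is to combine Proposition \ref{prop:monomials-in-Fd}, which asserts that $\FIO$-divisibility is a well-partial-order on the monomials of $\Fo{d}$, with the compatibility between the monomial order and $\FIO$-divisibility already noted in the paragraph preceding the corollary.

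First I would make the compatibility precise: whenever $\mu, \nu$ are monomials with $\mu \big|_{\FIO} \nu$ and $\mu \neq \nu$, one has $\nu > \mu$. Writing $\nu = w \cdot \Fo{d}(\eps)(\mu)$ with $w \in \Pb_n$ a monomial and $\eps \in \Hom_{\FIO}([m],[n])$, axiom (i) of the monomial-order definition handles the factor $w$, yielding $\nu \ge \Fo{d}(\eps)(\mu)$ with strict inequality when $w \neq 1$. For the other piece one must show $\Fo{d}(\eps)(\mu) \ge \mu$, strict if $\eps$ is not the identity; if $m = n$ then $\eps = \mathrm{id}$ and equality is trivial, while if $m < n$ one decomposes $\eps = \iota_{\eps(m), n} \circ \eps'$ with $\eps'\colon [m] \to [\eps(m)]$ the corestriction and applies axiom (iii) on the $\iota$-factor (which is non-trivial whenever $\eps(m) < n$); the residual case $\eps(m) = n$ is handled by induction using Lemma \ref{lem:decompose} to further factor $\eps$ through $[n-1]$.

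Second, I would deduce the corollary. Let $T$ be a non-empty set of monomials in $\Fo{d}$, and let $T_0 \subseteq T$ be the set of $\big|_{\FIO}$-minimal elements of $T$. By Proposition \ref{prop:monomials-in-Fd}, $T_0$ is finite and every $\mu \in T$ satisfies $\mu' \big|_{\FIO} \mu$ for some $\mu' \in T_0$. Since $>$ is a total order, $T_0$ has a smallest element $\mu_0$ in the monomial order. By the compatibility step, for every $\mu \in T$ we get $\mu \ge \mu' \ge \mu_0$, so $\mu_0$ is a minimum of $T$. Uniqueness is immediate from totality of $>$.

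The main obstacle is the compatibility step, and within it the reduction from a general $\FIO$-morphism $\eps$ to the inclusions $\iota_{k,k+1}$ covered by axiom (iii); everything else is then a standard argument that a total order refining a well-partial-order is automatically a well-order.
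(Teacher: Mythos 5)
Your second step is exactly the paper's argument: by Proposition \ref{prop:monomials-in-Fd} the set has finitely many $\big|_{\FIO}$-minimal elements, and the $>$-smallest of these is a minimum because $>$ refines $\FIO$-divisibility. The gap is in your first step, where you try to derive that refinement property from axioms (i)--(iii). Your induction never terminates in a case covered by axiom (iii): if $\eps\colon [m]\to[n]$ satisfies $\eps(m)=n$, factoring $\eps$ through $[n-1]$ produces a factor $[n-1]\to[n]$ which again hits the top of its codomain and is not an inclusion $\iota_{n-1,n}$, and likewise Lemma \ref{lem:decompose} only yields a factor in $\Hom_{\FIO}([m],[m+1])$ which may be the shift $j\mapsto j+1$, again top-hitting and not an $\iota$. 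Axiom (iii) speaks only about the maps $\iota_{m,n}$, so the recursion just reproduces the same obstruction and never closes.

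This is not merely an expository defect in your write-up: the inequality $\Fo{d}(\eps)(\mu)\ge\mu$ for arbitrary $\eps$ is not a formal consequence of (i)--(iii) as literally stated. For instance, for $\Fo{0}$ over $\XO{1}$ order the monomials first by total degree, then by the lexicographic order on the zero-padded exponent sequences (position $1$ most significant, larger exponent larger), and last by the ambient index $n$. One checks (i)--(iii) hold, yet $x_2\in\Pb_2$ is smaller than $x_1\in\Pb_1$ even though $x_1\big|_{\FIO}\,x_2$, and $x_1>x_2>x_3>\cdots$ (each $x_n$ taken in $\Pb_n$) is an infinite descending chain, so such an order does not refine $\FIO$-divisibility. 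Consequently the compatibility you are trying to prove has to be taken as part of the definition of a monomial order -- in effect condition (iii) must be required for every $\eps\in\Hom_{\FIO}([m],[n])$, not only for $\iota_{m,n}$ -- which is how the paper's preceding ``Observe that every monomial order refines $\FIO$-divisibility'' and its proof of the corollary implicitly use it (and the order of Example 6.6 does satisfy this stronger condition). Once that is granted, your first step is immediate and the rest of your argument coincides with the paper's proof.
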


\begin{proof}
By Proposition \ref{prop:monomials-in-Fd}, any set of monomials $\Mcc \neq \emptyset$ of $\Fo{d}$ has finitely many distinct minimal elements with respect to $\FIO$-divisibility, say $\mu_1,\ldots,\mu_s$ (see \cite{Kru}). Assume that $\mu_1$ is the smallest of these $s$ monomials in the order $>$. We claim that $\mu_1$ is the desired smallest element of $\Mcc$. Indeed, if $\nu$ is any monomial in $\Mcc$, then, by the choice of $\mu_1,\ldots,\mu_s$, there is some $k \in [s]$ such that $v$ is $\FIO$-divisible by $\mu_k$. Thus, the properties of a monomial order imply $\nu \ge \mu_k$. We conclude by noting that $\mu_k > \mu_1$.
\end{proof}

If $\M$ is any $\FIO$-module we often write instead $q \in \coprod_{0\leq n} \M_n$ simply $q \in \M$ and refer to $q$ as an \emph{element} of $\M$. For example, this leads to the notion of a \emph{subset} of $\M$.

\begin{defn}
Let $>$ be a monomial order on $\Fo{d}$.
Consider an element $q = \sum c_\mu \mu \in \Fo{d}_m$ for some $m \in \N_0$ with monomials $\mu$ and coefficients
$c_{\mu} \in K$. If $q \neq 0$ we define its \emph{leading
monomial} $\lm (q)$ as the largest monomial $\mu$ with a non-zero coefficient $c_{\mu}$. This coefficient is called the \emph{leading coefficient}, denoted $\lc (q)$. The \emph{leading term} of $q$ is $\lt (q) = \lc (q) \cdot \lm (q)$.

 If $q$ ranges over the elements of a subset $E$ of $\Fo{d}$, we use $\lm (E), \lc (E), \lt (E)$ to denote the sets of the corresponding elements.
\end{defn}

For a subset $E$ of any $\FIO$-module $\M$, it is convenient to denote by
$\la E \ra_{\M}$ the smallest $\FIO$-submodule of $\M$ that contains $E$. It is called the
$\FIO$-submodule \emph{generated by $E$}.

\begin{rem}
\label{rem:gen-vs-divisibility}
Let $\mu, \nu$ be two monomials of $\Fo{d}$. Then $\mu \big |_{\FIO}\, \nu$ if and only if $\nu \in \la \mu \ra_{\Fo{d}}$.
\end{rem}

\begin{defn}
Fix a monomial order $>$ on $\Fo{d}$,
and let $\M$ be an $\FIO$-submodule of $\Fo{d}$.
\begin{enumerate}
\item The \emph{initial module} of $M$ is
\[
\ini (M) = \la \lt (q) \s q \in M \ra_{\Fo{d}}.
\]
It is a submodule of $\Fo{d}$.

\item A subset $B$ of $\M$ of $F(d)$ is a \emph{Gr\"obner basis} of $M$ (with respect to $>$) if
\[
\ini (M) = \la \lt (B) \ra_{\Fo{d}}.
\]

\end{enumerate}
\end{defn}

We do not require that a Gr\"obner basis is finite although finite Gr\"obner bases are of course more useful. Our goal is to show that the latter exist.

\begin{defn}
Let $B$ be any subset of $\Fo{d}$. We say that an element $q \in \Fo{d}_m$ \emph{reduces to $r \in \Fo{d}_m$ by $B$} if there is some $q' \in \la B \ra_{\Fo{d}} \cap \Fo{d}_m$ such that
\[
q = q' + r \quad \text{ and either } \quad \lm (r) < \lm (q) \text{ or } r = 0.
\]
In this case, it is said that $q$ is \emph{reducible by $B$}.
\end{defn}

\begin{rem}
If $K$ is a field, then $q$ is reducible by $B$ if and only if there is some $b \in B$ such that $\lm (b) \big |_{\FIO}\, \lm (q)$ (see Remark \ref{rem:gen-vs-divisibility}).
\end{rem}

Iterating reductions gives a division algorithm.

\begin{defn}
Given an element $q \in \Fo{d}$, an element $r \in \Fo{d}$ is said to be a \emph{remainder of $q$ on dividing by $B$} or a \emph{normal form of $q$ modulo $B$} if there is a sequence of elements $q_0, q_1,\ldots,q_s \in \Fo{d}$ such that $q = q_0$, $r = q_s$, each $q_{i+1}$ is a reduction of $q_i$ by $B$, and either $r = 0$ or $r \neq 0$ is not reducible by $B$.
\end{defn}

As in the classical noetherian setting, one has the following equivalence.

\begin{prop}
\label{prop:G-basis-criterion}
Let $\M$ be any $\FIO$-submodule of $\Fo{d}$, and let $B$ be a subset of $\Fo{d}$.
Then the following conditions are equivalent:
\begin{enumerate}
\item $B$ is a Gr\"obner basis of $\M$;

\item every $q \neq 0$ in $\M$ is reducible modulo $B$;

\item every $q \in \M$ has remainder zero modulo $B$.
\end{enumerate}

In particular, any Gr\"obner basis of $\M$ generates $\M$.
\end{prop}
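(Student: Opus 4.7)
The plan is to argue cyclically, (i)$\Rightarrow$(ii)$\Rightarrow$(iii)$\Rightarrow$(i), mirroring the classical Gr\"obner basis argument but with ordinary divisibility replaced by $\FIO$-divisibility (see Definition~\ref{def:FIO-div} and Remark~\ref{rem:gen-vs-divisibility}). Axiom~(ii) of a monomial order makes leading terms behave multiplicatively under the $\FIO$-action, so that for $a\in\Pb_m$, $f\in\Fo{d}_m$, and $\eps\in\Hom_{\FIO}([m],[n])$ with $a\,\Fo{d}(\eps)(f)\neq 0$ one has
\[
\lt\bigl(a\,\Fo{d}(\eps)(f)\bigr)=\lt(a)\cdot\Fo{d}(\eps)\bigl(\lt(f)\bigr);
\]
this observation will be used throughout.

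For (i)$\Rightarrow$(ii), I would fix $0\neq q\in\M_m$. Since $\lt(q)\in\ini(\M)=\la\lt(B)\ra_{\Fo{d}}$, I expand $\lt(q)=\sum_i a_i\,\Fo{d}(\eps_i)\bigl(\lt(b_i)\bigr)$ with $b_i\in B$, $\eps_i\in\Hom_{\FIO}([n_i],[m])$, $a_i\in\Pb_m$, and set $q':=\sum_i a_i\,\Fo{d}(\eps_i)(b_i)\in\la B\ra_{\Fo{d}}\cap\Fo{d}_m$. Splitting each $\Fo{d}(\eps_i)(b_i)$ into $\Fo{d}(\eps_i)(\lt(b_i))$ plus strictly lower-order terms, and summing, shows that $q'=\lt(q)+(\text{terms whose leading monomial is strictly smaller than }\lm(q))$, so $r:=q-q'$ satisfies $r=0$ or $\lm(r)<\lm(q)$, i.e., $q$ is reducible modulo $B$.

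For (ii)$\Rightarrow$(iii), I iterate the reduction: given $q_i\neq 0$ in $\M$, apply (ii) to obtain $q'_i\in\la B\ra_{\Fo{d}}$ and set $q_{i+1}=q_i-q'_i$; since $B\subseteq\M$ forces $\la B\ra_{\Fo{d}}\subseteq\M$, the remainder $q_{i+1}$ still lies in $\M$. The resulting sequence $\lm(q_0)>\lm(q_1)>\cdots$ is strictly descending, and Corollary~\ref{cor:unique-min-element} tells us that $>$ is a well-order on monomials of $\Fo{d}$, so the process must terminate at some $q_s=0$. For (iii)$\Rightarrow$(i), the inclusion $\la\lt(B)\ra_{\Fo{d}}\subseteq\ini(\M)$ is immediate from $B\subseteq\M$. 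For the reverse, given $0\neq q\in\M$ I consider the first step $q=q'+r$ of a reduction chain with remainder $0$; since $\lm(r)<\lm(q)$, one has $\lt(q)=\lt(q')$, and expanding $q'$ as an $\FIO$-combination of elements of $B$ and using the multiplicativity of $\lt$ identifies $\lt(q')$ with a combination of $\Fo{d}(\eps)(\lt(b))$ coming from those indices attaining the maximal leading monomial, so $\lt(q)\in\la\lt(B)\ra_{\Fo{d}}$. The final assertion that any Gr\"obner basis generates $\M$ follows by telescoping: iterating $q_{i+1}=q_i-q'_i$ down to $q_s=0$ writes $q=\sum_{i=0}^{s-1}q'_i\in\la B\ra_{\Fo{d}}$.

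The main technical subtlety I foresee is the bookkeeping of cancellations in the representation $\lt(q)=\sum_i a_i\,\Fo{d}(\eps_i)(\lt(b_i))$: over a general noetherian ring $K$, individual summands need not have leading monomial equal to $\lm(q)$, and one has to verify carefully that after forming $q'$ the leading terms above $\lm(q)$ annihilate while the ones at level $\lm(q)$ combine to precisely $\lt(q)$, so that the correction $q-q'$ genuinely drops in the order. The only other non-routine ingredient is the well-ordering of $>$, which (unlike in a polynomial ring over a field) must be imported from Corollary~\ref{cor:unique-min-element}, whose proof ultimately rests on the $\FIO$-Higman-type statement Proposition~\ref{prop:monomials-in-Fd}.
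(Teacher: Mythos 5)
Your overall route is the same as the paper's: the cycle (i)$\Rightarrow$(ii)$\Rightarrow$(iii)$\Rightarrow$(i), with termination of the reduction chain in (ii)$\Rightarrow$(iii) supplied by Corollary~\ref{cor:unique-min-element}; the paper simply declares (i)$\Rightarrow$(ii) and (iii)$\Rightarrow$(i) to be immediate from the definitions, and your (ii)$\Rightarrow$(iii) and the telescoping for the final assertion match the paper and are fine. Where you make the other two implications explicit, however, both run into the same cancellation problem. In (i)$\Rightarrow$(ii), from an arbitrary representation $\lt(q)=\sum_i a_i\,\Fo{d}(\eps_i)(\lt(b_i))$ the element $q'=\sum_i a_i\,\Fo{d}(\eps_i)(b_i)$ need \emph{not} satisfy $\lm(q-q')<\lm(q)$: the tails $a_i\,\Fo{d}(\eps_i)\bigl(b_i-\lt(b_i)\bigr)$ are only bounded by the monomials of $a_i$ times $\Fo{d}(\eps_i)(\lm(b_i))$, which may exceed $\lm(q)$; those top monomials cancel in $\sum_i a_i\,\Fo{d}(\eps_i)(\lt(b_i))$, but there is no reason the corresponding tail terms cancel in $q'$, so they do not ``annihilate'' as you suggest. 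The correct fix is to refine the representation first: since $\la\lt(B)\ra_{\Fo{d}}$ is generated by terms, keep only those terms $c\,u\,\Fo{d}(\eps_i)(\lt(b_i))$ with $u\cdot\Fo{d}(\eps_i)(\lm(b_i))=\lm(q)$, and form $q'$ from these; then every non-leading monomial occurring in $q'$ is $<\lm(q)$ and the reduction genuinely drops. (Also, your multiplicativity formula for $\lt$ is false over a general noetherian $K$, where leading coefficients can multiply to zero; all comparisons should be made at the level of monomials, with coefficients handled separately as in the proof of Theorem~\ref{thm:finite-G-basis}.)

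The step (iii)$\Rightarrow$(i) as you wrote it is a genuine gap: for $q'\in\la B\ra_{\Fo{d}}$ with $\lt(q')=\lt(q)$ it is not true in general that $\lt(q')\in\la\lt(B)\ra_{\Fo{d}}$, because the contributions of the summands $a_i\,\Fo{d}(\eps_i)(b_i)$ at the top monomial can cancel, in which case $\lt(q')$ arises from \emph{lower} monomials of the $b_i$ and your ``indices attaining the maximal leading monomial'' contribute nothing there. Indeed, if one reads the definition of reduction literally ($q'$ an arbitrary element of $\la B\ra_{\Fo{d}}$), condition (iii) records no more than $\M\subseteq\la B\ra_{\Fo{d}}$, and a generating set need not be a Gr\"obner basis: take $c=1$, $d=0$, $B=\{x_1+x_2\}\subset \Pb_2$, $\M=\la B\ra_{\Fo{0}}$, and a monomial order with $x_1>x_2>\cdots$; if $2$ is invertible in $K$ then $x_n\in\M_n$ for $n\ge 3$, while $\la\lt(B)\ra_{\Fo{0}}$ at level $n$ is $(x_1,\dots,x_{n-1})$, yet every element of $\M$ has remainder zero in one step via $q'=q$, $r=0$. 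So to prove (iii)$\Rightarrow$(i) you must use leading-term-matching reductions, i.e.\ each step should exhibit $\lt(q)$ as a combination of elements $u\,\Fo{d}(\eps)(\lt(b))$ at the monomial $\lm(q)$ --- exactly what the refined construction in (i)$\Rightarrow$(ii) produces and what the paper tacitly intends; with that reading the first step of a remainder-zero chain certifies $\lt(q)\in\la\lt(B)\ra_{\Fo{d}}$ directly, with no expansion-and-cancellation bookkeeping. Note finally that the reverse inclusion $\la\lt(B)\ra_{\Fo{d}}\subseteq\ini(\M)$, and the fact that the iterated remainders in (ii)$\Rightarrow$(iii) stay in $\M$, both use $B\subseteq\M$, which the proposition's wording does not state but which the definition of Gr\"obner basis and the application in Theorem~\ref{thm:finite-G-basis} supply; you were right to make this hypothesis explicit.
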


\begin{proof} The definitions give that (i) implies (ii) and that (i) is a consequence of (iii). Moreover, (iii) yields the final assertion.

Assume (ii) is true, and consider any $q_0 \neq 0$ in $\M$. Reducing $q_0$ by $B$ we get some $q_1 \in \M$ with $\lm (q_1) < \lm (q_0)$ or $q_1 = 0$. If $q_1 = 0$ we are done. Otherwise we reduce $q_1$. Repeating if necessary, this process terminates because of Corollary~\ref{cor:unique-min-element}. This shows (iii).
\end{proof}

If $K$ is a field, then a subset $B$ of $\Fo{d}$ is a Gr\"obner basis of $\M$ if and only if $\la \lm (B) \ra_{\Fo{d}} = \la \lm (\M) \ra_{\Fo{d}}$. For the general case, we need one more step.

\begin{defn}
Given an $\FIO$-submodule $\M$ of $\Fo{d}$ and a monomial $\mu \in \Fo{d}_m$, consider the set
\[
\lc (\M, \mu) = \{\lc (q) \s q \in \M_m \text{ and } \lm (q) = \mu\} \cup \{0_K\}.
\]
It is an ideal of $K$.
\end{defn}

Note that if $\mu, \nu$ are monomials of $\Fo{d}$ with $\mu \big |_{\FIO}\, \nu$, then $\lc (\M, \mu) \subset \lc (\M, \nu)$.

Recall our standing assumption in this section that $K$ is a noetherian ring.

\begin{thm}
\label{thm:finite-G-basis}
Fix any integer $d \ge 0$ and a monomial order $>$ on $\Fo{d}$.
Every $\FIO$-submodule of the free $\FIO$-module $\Fo{d}$ over $\Pb \cong (\XO{1})^{\otimes c}$ has a finite Gr\"obner basis (with respect to $>$).
\end{thm}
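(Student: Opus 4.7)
The plan is to argue by contradiction, adapting the classical Gr\"obner basis existence argument over a noetherian coefficient ring (in the spirit of Aschenbrenner--Hillar \cite{AH}). Assume $\M \subseteq \Fo{d}$ has no finite Gr\"obner basis. Then, by Proposition~\ref{prop:G-basis-criterion}, any finite subset $B_n := \{q_1, \ldots, q_n\} \subseteq \M$ already constructed fails to be a Gr\"obner basis, so some nonzero element of $\M$ is not reducible modulo $B_n$; choose such an element to be $q_{n+1}$. This produces an infinite sequence $q_1, q_2, \ldots \in \M$ in which each $q_{n+1}$ is non-reducible modulo its predecessors. Set $\mu_n := \lm(q_n)$.

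Next I invoke Proposition~\ref{prop:monomials-in-Fd}: $\FIO$-divisibility is a well-partial-order on the monomials of $\Fo{d}$. Since any infinite sequence in a well-partial-order admits an infinite weakly increasing subsequence (a standard consequence of the absence of infinite antichains and infinite strictly descending chains), after passing to a subsequence and relabeling I may assume $\mu_1 \big |_{\FIO}\, \mu_2 \big |_{\FIO}\, \cdots$; non-reducibility of each $q_{n+1}$ modulo $\{q_1,\dots,q_n\}$ is inherited by the subsequence, since reducibility only grows when the set does. For each $j < k$, fix an $\FIO$-morphism $\eps_{j,k}$ and a monomial $u_{j,k}$ in the corresponding $\Pb$-ring witnessing $\mu_j \big |_{\FIO}\, \mu_k$; that is, $u_{j,k}\cdot \Fo{d}(\eps_{j,k})(\mu_j) = \mu_k$. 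Monomial order properties (i) and (ii) then ensure that $g_{j,k} := u_{j,k}\cdot \Fo{d}(\eps_{j,k})(q_j) \in \M$ has $\lm(g_{j,k}) = \mu_k$ and $\lc(g_{j,k}) = \lc(q_j)$.

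Finally I exploit that $K$ is noetherian: the ascending chain $I_k := (\lc(q_1), \ldots, \lc(q_k)) \subseteq K$ stabilizes. Beyond the stabilization index, $\lc(q_k) = \sum_{j<k} c_j\,\lc(q_j)$ for some $c_j \in K$. The element $q' := \sum_{j<k} c_j\, g_{j,k}$ then lies in $\la q_1, \ldots, q_{k-1}\ra_{\Fo{d}}$ at the level containing $q_k$; since every $g_{j,k}$ has leading monomial $\mu_k$ with leading coefficient $\lc(q_j)$, the sum $q'$ has leading monomial $\mu_k$ and leading coefficient $\sum_j c_j \lc(q_j) = \lc(q_k)$. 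Hence $\lm(q_k - q') < \mu_k$, showing that $q_k$ is reducible modulo $\{q_1, \ldots, q_{k-1}\}$, contradicting the construction.

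The main substantive ingredient is the well-partial-order result of Proposition~\ref{prop:monomials-in-Fd}, which is already established. The main technical obstacle in executing the plan is verifying that $\FIO$-morphisms and polynomial multiplications interact with leading terms as claimed for $g_{j,k}$; this is built into the monomial order axioms, so it amounts to routine bookkeeping rather than a substantive difficulty. A secondary point is the subsequence extraction from a well-partial-order, which can either be cited as standard or derived in a few lines.
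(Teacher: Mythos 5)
Your proof is correct, and it takes a genuinely different route from the paper's. You argue by contradiction: assume no finite Gr\"obner basis exists, extract an infinite sequence of irreducible elements, pass to a $\FIO$-divisibility chain via Proposition~\ref{prop:monomials-in-Fd}, and let noetherianity of $K$ force the leading-coefficient ideals $(\lc(q_1),\dots,\lc(q_k))$ to stabilize, producing a reduction and hence a contradiction. The paper instead gives a direct construction: it refines $\FIO$-divisibility to a new order $\le_{\M}$ that also tracks the leading-coefficient ideals $\lc(\M,\mu)$, shows $\le_{\M}$ is itself a well-partial-order (again by Proposition~\ref{prop:monomials-in-Fd} together with noetherianity of $K$), takes the finitely many $\le_{\M}$-minimal monomials of $\lm(\M)$, and for each one lifts a finite generating set of the corresponding ideal $\lc(\M,\mu_i) \subset K$; the union of these lifts is shown to be a Gr\"obner basis. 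Both proofs rest on exactly the same two ingredients --- the WPO result and noetherianity of $K$ --- so the difference is organizational: the paper's argument has the virtue of actually exhibiting the Gr\"obner basis (useful, e.g., for the subsequent refinement in Proposition~\ref{prop:noeth-subalgebra}), while your proof-by-contradiction is shorter to state and closer to the style of Aschenbrenner--Hillar \cite{AH}. Two small points worth making explicit in a written-up version: (a) after passing to a subsequence you must record, as you do, that non-reducibility survives because the reducing set only shrinks; (b) in forming $q' = \sum_j c_j g_{j,k}$ you should note that even if some summand vanishes or has a lower leading monomial due to $c_j\lc(q_j)=0$, the coefficient of $\mu_k$ in $q'$ is still exactly $\sum_j c_j\lc(q_j) = \lc(q_k)$, which is all that is needed.
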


\begin{proof}
Let $\M \neq 0$ be any $\FIO$-submodule of $\Fo{d}$.
Define a new partial order on the set of monomials of $\Fo{d}$ by
\[
\mu \le_{\M} \nu \quad \text{ if } \mu \big |_{\FIO}\, \nu \text{ and } \lc (\M, \mu) =\lc (\M, \nu).
\]
We claim that this is a well-partial-order on the set of monomials of $\Fo{d}$.

Indeed, consider any infinite sequence $\mu_1, \mu_2, \ldots $ of monomials in $\Fo{d}$. Since
$\FIO$-divisibility is a well-partial-order by
Proposition \ref{prop:monomials-in-Fd}, passing to a suitable infinite subsequence if necessary, we
 may assume that $\mu_i \big |_{\FIO}\, \mu_{i+1}$ for all $i$. Hence, we
get an ascending sequence of ideals of $K$
\[
\lc (\M, \mu_1) \subset \lc (\M, \mu_2) \subset \cdots.
\]
Since $K$ is noetherian, this sequence stabilizes. Thus, there is some index $i$ with
$\mu_i \le_{\M} \mu_{i+1}$, which shows that $\le_{\M}$ is a well-partial-order.

It follows (see \cite{Kru}) that
the set $\lm (\M)$ of leading monomials has finitely many minimal monomials in the order $\le_{\M}$,
say,
$\mu_1,\ldots,\mu_t$. Thus, for every monomial $\mu \in \lm (\M)$, there is some
$\mu_i$ such that $\mu_i \big |_{\FIO}\, \mu$ and $\lc (\M, \mu_i) =\lc (\M, \mu)$.
Since every ideal $\lc (\M, \mu_i)$ is finitely generated, there is a finite subset $E_i \subset \M$ such
 that $\lc (E_i)$ generates the ideal
$\lc (\M, \mu_i)$. We claim that $B = E_1 \cup \ldots \cup E_t$ is a Gr\"obner basis of $\M$.

Indeed, consider any element $q \neq 0$ of $\M_m$ for some $m$. Put $\mu = \lm (q)$. Choose
$i \in [t]$ such that $\mu_i \big |_{\FIO}\, \mu$ and $\lc (\M, \mu_i) =\lc (\M, \mu)$. Hence, there are
elements $b_1,\ldots,b_s \in E_i \subset B$ such that $\lc (q) = k_1 \lc (b_1) + \cdots k_s \lc (b_s)$
for suitable $k_1,\ldots,k_s \in K$. Suppose $\mu_i$ is in $\M_{m_i}$. Then $\mu_i \big |_{\FIO}\, \mu$
gives that there are a morphism $\eps \in \Hom_{\FIO} (m_i, m)$ and a monomial $\nu \in \Pb_m$ such that $\mu = \nu \M (\eps) (\mu_i)$ (see Definition \ref{def:FIO-div}). It follows that the leading term of $k_1 \nu \M (\eps) (b_1) + \cdots k_s \nu \M (\eps) (b_s)$ is $\lc (q) \mu = \lt (q)$. This shows that $q$ is reducible by $B$. Thus, $B$ is a finite Gr\"obner basis of $\M$ by Proposition \ref{prop:G-basis-criterion}.
\end{proof}

The main result of this section follows now quickly.

\begin{thm}
\label{thm:fg-gives-noeth-mod}
\
\begin{enumerate}
\item Every finitely generated $\FIO$-module over $\Pb \cong (\XO{1})^{\otimes c}$ is noetherian.

\item Every finitely generated $\FI$-module over $(\XI{1})^{\otimes c}$ is noetherian.
\end{enumerate}
 \end{thm}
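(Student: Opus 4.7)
The plan is to reduce both parts to Theorem~\ref{thm:finite-G-basis} via the general machinery of Propositions~\ref{prop:finite-gen} and~\ref{prop:noeth-along-sequences}.

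For part (i), I would first observe that Theorem~\ref{thm:finite-G-basis} immediately implies that every free $\FIO$-module $\Fo{d}$ over $\Pb$ is noetherian. Indeed, given any $\FIO$-submodule $\M \subseteq \Fo{d}$, Theorem~\ref{thm:finite-G-basis} furnishes a finite Gr\"obner basis $B$ of $\M$, and by Proposition~\ref{prop:G-basis-criterion} such a $B$ generates $\M$ as an $\FIO$-module. A finite direct sum $\bigoplus_{i=1}^k \Fo{d_i}$ is then noetherian by repeated application of Proposition~\ref{prop:noeth-along-sequences}. Finally, given a finitely generated $\FIO$-module $\M$ over $\Pb$, Proposition~\ref{prop:finite-gen} presents $\M$ as a quotient of such a finite direct sum, and Proposition~\ref{prop:noeth-along-sequences} passes noetherianity to the quotient.

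For part (ii), the strategy is to reduce to part (i) by restricting everything along the inclusion $\FIO \hookrightarrow \FI$. By Remark~\ref{rem:poly-alg}(iv), the $\FIO$-algebra induced by $\XI{1}$ coincides with $\XO{1}$, so the $\FI$-algebra $(\XI{1})^{\otimes c}$, regarded as an $\FIO$-algebra, is precisely $\Pb \cong (\XO{1})^{\otimes c}$. Any $\FI$-module over $(\XI{1})^{\otimes c}$ restricts to an $\FIO$-module over $\Pb$, and any $\FI$-submodule is automatically an $\FIO$-submodule. Moreover, by Remark~\ref{rem:free}(iv), each free $\FI$-module $\Fi{d}$, viewed as an $\FIO$-module over $\Pb$, is generated by the finite set $\{e_\sigma \mid \sigma \in \Sym(d)\}$, hence is finitely generated as an $\FIO$-module. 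Therefore $\Fi{d}$ is noetherian as an $\FIO$-module by part (i).

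Given a finitely generated $\FI$-module $\M$ over $(\XI{1})^{\otimes c}$, I would apply Proposition~\ref{prop:finite-gen} to obtain a surjection $\bigoplus_{i=1}^k \Fi{d_i} \twoheadrightarrow \M$ of $\FI$-modules, which is also a surjection of $\FIO$-modules. By the above and Proposition~\ref{prop:noeth-along-sequences}, $\M$ is noetherian as an $\FIO$-module. Now any $\FI$-submodule $\Nb \subseteq \M$ is an $\FIO$-submodule, hence finitely generated as an $\FIO$-module by some set $G$; since $\Hom_{\FIO}([m],[n]) \subseteq \Hom_{\FI}([m],[n])$, the same set $G$ generates $\Nb$ as an $\FI$-module, completing the proof. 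The only potentially delicate step is the verification that $\Fi{d}$ is finitely generated as an $\FIO$-module, but this is exactly what Remark~\ref{rem:free}(iv) provides; everything else is a routine concatenation of prior results.
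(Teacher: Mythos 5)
Your proof is correct and follows essentially the same route as the paper: part (i) combines Theorem~\ref{thm:finite-G-basis}, Proposition~\ref{prop:G-basis-criterion}, Proposition~\ref{prop:finite-gen}, and Proposition~\ref{prop:noeth-along-sequences} exactly as the authors do, and part (ii) reduces to part (i) by restriction along $\FIO \hookrightarrow \FI$ using Remark~\ref{rem:free}(iv). The only cosmetic difference is the order of the final transfer step — you establish noetherianity of $\M$ as an $\FIO$-module and then push $\FI$-submodules through, whereas the paper first concludes each $\Fi{d}$ is noetherian as an $\FI$-module and then reruns the part (i) argument — but the content and the key observation (that $\FIO$-generation implies $\FI$-generation since $\Hom_{\FIO} \subseteq \Hom_{\FI}$) are identical.
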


\begin{proof}
(i) Let $\M$ be a finitely generated $\FIO$-module over $\Pb$.
Combining Theorem~\ref{thm:finite-G-basis} and Proposition~\ref{prop:G-basis-criterion}, it follows that every free $\FIO$-module $\Fo{d}$ over $\Pb$ is noetherian. Hence
Propositions~\ref{prop:finite-gen} and \ref{prop:noeth-along-sequences} imply that $\M$ is a quotient of a noetherian $\FIO$-module, and so $\M$ also is noetherian.

(ii) As observed above, every submodule of a free $\FI$-module $\Fi{d}$ over $(\XI{1})^{\otimes c}$ may also be considered as an $\FIO$-module over $\Pb$. Moreover, $\Fi{d}$ is finitely generated as $\FIO$-module by Remark~\ref{rem:free}(iv), and so it is noetherian as an $\FIO$-module by (i). It follows that it also is noetherian as an $\FI$-module. Now we conclude as in (i).
\end{proof}

After completing the first version of this paper Jan Draisma kindly informed as that Theorem \ref{thm:fg-gives-noeth-mod}(ii) was independently shown in \cite{DKK}.

As a first consequence of Theorem \ref{thm:fg-gives-noeth-mod} we recover main results of \cite{CEFN, CEF}
(see \cite[Theorem A]{CEFN} and \cite[Theorem 1.3]{CEF})
in the case of $\FI$-modules and,
for $\FIO$-modules, of \cite{SS-14}
(see \cite[Theorem 7.1.2]{SS-14}).

\begin{cor}
\label{cor:church}
\
\begin{enumerate}
\item Every finitely generated $\FIO$-module over $\XO{0}$ is noetherian.

\item Every finitely generated $\FI$-module over $\XI{0}$ is noetherian.
\end{enumerate}
\end{cor}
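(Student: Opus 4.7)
The plan is to deduce this corollary directly from Theorem \ref{thm:fg-gives-noeth-mod} by realizing $\XO{0}$ (respectively $\XI{0}$) as a quotient of the polynomial algebra $\XO{1} = (\XO{1})^{\otimes 1}$ (respectively $\XI{1} = (\XI{1})^{\otimes 1}$) and lifting any $\FIO$- or $\FI$-module structure along this quotient.

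First I would note that there is a surjective natural transformation $\XO{1} \to \XO{0}$ of $\FIO$-algebras: at each object $[n]$, it is given by the $K$-algebra epimorphism $K[x_1,\ldots,x_n] \to K$ sending every variable to zero, which obviously commutes with the $\FIO$-morphisms $\eps^*$. The analogous epimorphism $\XI{1} \to \XI{0}$ exists on the $\FI$-side.

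Next, given a finitely generated $\FIO$-module $\M$ over $\XO{0}$, I would view $\M$ as an $\FIO$-module over $\XO{1}$ by letting $\XO{1}_n$ act on $\M_n$ through the quotient $\XO{1}_n \to K = \XO{0}_n$; all compatibility diagrams of Definition \ref{def:OI-module} remain commutative. The key observation is that, since the variables act as zero, for each $n$ the $\XO{1}_n$-submodules of $\M_n$ coincide with its $K$-submodules, and hence the $\FIO$-submodules of $\M$ over $\XO{1}$ are exactly the $\FIO$-submodules over $\XO{0}$. By Remark \ref{rem:finite-gen-module}, the same identification holds for generating sets: a finite $G$ generates $\M$ over $\XO{0}$ if and only if it generates $\M$ over $\XO{1}$. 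Therefore $\M$ is finitely generated as an $\FIO$-module over $\XO{1}$, and Theorem \ref{thm:fg-gives-noeth-mod}(i) (with $c = 1$) implies that every $\FIO$-submodule of $\M$ is finitely generated over $\XO{1}$, hence over $\XO{0}$. This establishes (i).

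For (ii) I would repeat the same argument verbatim, replacing $\FIO$ by $\FI$, $\XO{0}$ by $\XI{0}$, $\XO{1}$ by $\XI{1}$, and applying Theorem \ref{thm:fg-gives-noeth-mod}(ii) instead. There is no substantial obstacle here—the whole proof is a bookkeeping reduction to the already-proved noetherianity theorem; the only thing that really needs verification is that passing through the quotient does not disturb either the notion of submodule or the notion of finite generation, which holds because scalar multiplication by the kernel of $\XO{1}_n \to K$ annihilates $\M_n$.
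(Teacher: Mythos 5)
Your argument is correct and follows essentially the same route as the paper, which deduces the corollary from Theorem~\ref{thm:fg-gives-noeth-mod} by observing that $\XO{0}$ (resp.\ $\XI{0}$) is a quotient of $\XO{1}$ (resp.\ $\XI{1}$); you have merely spelled out explicitly why submodules and finite generation are unaffected by viewing $\M$ through the quotient. (The paper also sketches an alternative for (i) directly from the Gr\"obner basis argument of Theorem~\ref{thm:finite-G-basis}, but that is not needed.)
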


\begin{proof}
Since $\XO{d}$ is a quotient of $\XO{1}$ as an $\FIO$-algebra and the analogous statement is true for $\XI{d}$, the claims follow by Theorem~\ref{thm:fg-gives-noeth-mod}.

Alternately, (i) is a consequence of the arguments for Theorem~\ref{thm:finite-G-basis} by noting that every $\Fo{d}_m$ is generated as an $\XO{0}_m$ module by a single monomial. Now (ii) follows from (i) as in Theorem~\ref{thm:fg-gives-noeth-mod}.
\end{proof}

The arguments in Theorem \ref{thm:finite-G-basis} can be somewhat extended. This gives an $\FIO$-analog of a folklore result about monomial subalgebra of polynomial rings in finitely many variables.

\begin{prop}
\label{prop:noeth-subalgebra}
Let $\Bb$ be a subalgebra of $\Pb \cong (\XO{1})^{\otimes c}$ that is generated by monomials over $K$. Assume that for any monomials $\mu, \nu \in \Bb$ the fact that $\mu = \eps^* (\nu) \cdot \kappa$ (in some $\Pb_n$) implies $\kappa \in \Bb$.
Then every $\FIO$-submodule of a free $\FIO$-module $\Fo{d}$ ($d \in \N_0$) over $\Bb$ has a finite Gr\"obner basis (with respect to any monomial order on $\Fo{d}$).

In particular, every finitely generated $\FIO$-module over $\Bb$ is noetherian.
\end{prop}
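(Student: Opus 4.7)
The plan is to mimic the proof of Theorem~\ref{thm:finite-G-basis} verbatim, with the hypothesis on $\Bb$ providing exactly what is needed to stay inside $\Bb$ at every step. The key observation is that every monomial of the free $\FIO$-module $\Fo{d}$ over $\Bb$ is also a monomial of the free $\FIO$-module $\Fo{d}$ over $\Pb$. Therefore the well-partial-order of Proposition~\ref{prop:monomials-in-Fd} restricts to a well-partial-order on monomials of $\Fo{d}$ over $\Bb$, and any monomial order on $\Fo{d}$ over $\Bb$ in the sense of this paper extends to (or is induced by) a monomial order on $\Fo{d}$ over $\Pb$.

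Fix a submodule $\M \neq 0$ of $\Fo{d}$ over $\Bb$ and a monomial order~$>$. As in the proof of Theorem~\ref{thm:finite-G-basis}, define the ideals $\lc(\M, \mu) \subseteq K$ and the refined partial order
\[
\mu \le_{\M} \nu \quad \text{if } \mu \big|_{\FIO}\, \nu \text{ and } \lc(\M, \mu) = \lc(\M, \nu),
\]
where $\FIO$-divisibility is taken in the sense of Definition~\ref{def:FIO-div} (equivalently, inside $\Pb$). The same argument as before shows that $\le_{\M}$ is a well-partial-order, yielding finitely many $\le_\M$-minimal monomials $\mu_1,\dots,\mu_t \in \lm(\M)$, and a finite set $B \subset \M$ such that $\lc(B \cap \M_{m_i})$ generates $\lc(\M, \mu_i)$ for every $i$. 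The main obstacle, and the point at which the hypothesis on $\Bb$ enters crucially, is verifying that $B$ is a Gr\"obner basis \emph{over $\Bb$}: given $q \in \M_m$ with $\lm(q) = \mu$, we pick $\mu_i \big|_{\FIO} \mu$ with $\lc(\M, \mu_i) = \lc(\M, \mu)$ and obtain a morphism $\eps$ and a monomial $\kappa \in \Pb_m$ with $\mu = \kappa \cdot \Fo{d}(\eps)(\mu_i)$. The saturation hypothesis on $\Bb$ is precisely the statement that $\kappa$ automatically lies in $\Bb$, so the reduction $q \mapsto q - k_1 \kappa \Fo{d}(\eps)(b_1) - \cdots - k_s \kappa \Fo{d}(\eps)(b_s)$ is a legitimate reduction inside the $\Bb$-module $\Fo{d}$. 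By Proposition~\ref{prop:G-basis-criterion}, applied now in the context of $\Bb$, $B$ is a finite Gr\"obner basis of $\M$.

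For the final assertion, I would argue exactly as in Theorem~\ref{thm:fg-gives-noeth-mod}: finite Gr\"obner bases imply that every submodule of $\Fo{d}$ is finitely generated (Proposition~\ref{prop:G-basis-criterion}), so each $\Fo{d}$ is noetherian over $\Bb$. Then Proposition~\ref{prop:finite-gen} presents any finitely generated $\FIO$-module over $\Bb$ as a quotient of a finite direct sum of such $\Fo{d}$'s, and Proposition~\ref{prop:noeth-along-sequences} concludes that this quotient is also noetherian.
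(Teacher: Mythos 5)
Your proposal is correct and follows essentially the same route as the paper: the hypothesis on $\Bb$ ensures that $\FIO$-divisibility (equivalently, membership in the cyclic submodule generated by a monomial) behaves over $\Bb$ just as over $\Pb$, so the Gr\"obner basis argument of Theorem~\ref{thm:finite-G-basis} carries over verbatim, and the noetherianity of finitely generated modules then follows via Propositions~\ref{prop:G-basis-criterion}, \ref{prop:finite-gen} and \ref{prop:noeth-along-sequences} exactly as in Theorem~\ref{thm:fg-gives-noeth-mod}. In fact you make explicit the one point the paper leaves implicit, namely that the saturation hypothesis is what guarantees the cofactor $\kappa$ lies in $\Bb$ so that reductions stay inside the $\Bb$-module.
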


\begin{proof}
As above, the second assertion is a consequence of the first one and Proposition~\ref{prop:G-basis-criterion}. To prove the first claim fix any $d \in \N_0$.
The assumption guarantees that $\FIO$-divisibility of monomials in $\Fo{d}$ over $\Pb$ remains a
well-partial-order when restricted to the set of monomials in $\Fo{d}$ over $\Bb$. Hence, the proof of Theorem~\ref{thm:finite-G-basis} shows that every submodule of $\Fo{d}$ has a finite Gr\"obner basis.
\end{proof}

For an $\FI$-module this implies as in Theorem~\ref{thm:fg-gives-noeth-mod}:

\begin{cor}
\label{cor:noeth-subalgebra}
Let $\Bb$ be a subalgebra of $(\XI{1})^{\otimes c}$ that is generated by monomials over $K$. Assume that for any monomials $\mu, \nu \in \Bb$ the fact $\mu = \eps^* (\nu) \cdot \kappa$
(in some $\bigl((\XI{1})^{\otimes c} \bigr)_n$) implies $\kappa \in \Bb$. Then every finitely generated $\FI$-module over $\Bb$ is noetherian.
\end{cor}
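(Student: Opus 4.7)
The plan is to mimic the reduction from $\FI$-modules to $\FIO$-modules carried out in Theorem~\ref{thm:fg-gives-noeth-mod}(ii), now with the subalgebra $\Bb$ playing the role previously played by $(\XI{1})^{\otimes c}$. Specifically, I would establish the $\FIO$-analog over $\Bb$ via Proposition~\ref{prop:noeth-subalgebra} and then transfer the conclusion back to the $\FI$-side.

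First I would observe that $(\XI{1})^{\otimes c}$, regarded as an $\FIO$-algebra, is naturally isomorphic to $\Pb \cong (\XO{1})^{\otimes c}$ (see Remark~\ref{rem:poly-alg}(iv)), so $\Bb$ induces an $\FIO$-subalgebra of $\Pb$ that is still generated over $K$ by monomials. The divisibility hypothesis on $\Bb$ is assumed for all $\FI$-morphisms $\eps$; since $\FIO$-morphisms form a subset of $\FI$-morphisms, the hypothesis holds in particular for $\FIO$-morphisms. Thus $\Bb$ satisfies the assumptions of Proposition~\ref{prop:noeth-subalgebra} as an $\FIO$-subalgebra of $\Pb$, and consequently every free $\FIO$-module $\Fo{d}$ over $\Bb$ is noetherian and every finitely generated $\FIO$-module over $\Bb$ is noetherian.

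Next I would show that every free $\FI$-module $\Fi{d}$ over $\Bb$, viewed as an $\FIO$-module over $\Bb$, is finitely generated by the finite set $\{e_\sigma : \sigma \in \Sym(d)\}$. This is exactly the argument used in Remark~\ref{rem:free}(iv): any $\pi \in \Hom_{\FI}([d], [n])$ factors uniquely as $\pi = \tilde\pi \circ \sigma$ with $\tilde\pi \in \Hom_{\FIO}([d], [n])$ and $\sigma \in \Sym(d)$, so $e_\pi = \Fi{d}(\tilde\pi)(e_\sigma)$. This factorization depends only on the source and target sets, not on $\Bb$, so it carries over verbatim from $(\XI{1})^{\otimes c}$ to $\Bb$. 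Combined with the previous step, $\Fi{d}$ is noetherian as an $\FIO$-module over $\Bb$; since every $\FI$-submodule of $\Fi{d}$ is in particular an $\FIO$-submodule and a finite $\FIO$-generating set is a fortiori a finite $\FI$-generating set, $\Fi{d}$ is noetherian as an $\FI$-module over $\Bb$.

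Finally, given any finitely generated $\FI$-module $\M$ over $\Bb$, Proposition~\ref{prop:finite-gen} provides a surjection from a finite direct sum of modules $\Fi{d_i}$ onto $\M$. Applying Proposition~\ref{prop:noeth-along-sequences} together with the noetherianity of each $\Fi{d_i}$ established in the previous paragraph yields that $\M$ is noetherian, as desired. The only real obstacle is the transfer step: one must check carefully that the monomial divisibility hypothesis on $\Bb$, phrased in terms of $\FI$-morphisms, survives restriction to $\FIO$-morphisms and that the identification $(\XI{1})^{\otimes c} \cong (\XO{1})^{\otimes c}$ of underlying $\FIO$-algebras respects the monomial structure; both are essentially formal, since the $\FIO$-morphisms are a subset of the $\FI$-morphisms and the underlying $K$-algebras at each level are literally the same polynomial rings.
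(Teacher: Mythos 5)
Your proposal is correct and follows the paper's intended argument: the paper proves this corollary exactly by the reduction "as in Theorem~\ref{thm:fg-gives-noeth-mod}(ii)," i.e., restricting the hypothesis to $\FIO$-morphisms so that Proposition~\ref{prop:noeth-subalgebra} applies, noting via Remark~\ref{rem:free}(iv) that $\Fi{d}$ is a finitely generated $\FIO$-module over $\Bb$, and then passing back to $\FI$-noetherianity and concluding with Propositions~\ref{prop:finite-gen} and \ref{prop:noeth-along-sequences}. No gaps.
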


Recall that Veronese subalgebras were introduced in Example~\ref{exa:Veronese}.

\begin{ex}
\begin{enumerate}
\item
The assumption in Proposition~\ref{prop:noeth-subalgebra} and Corollary~\ref{cor:noeth-subalgebra}, respectively, is satisfied for the Veronese subalgebras of $(\XO{1})^{\otimes c}$ and $(\XI{1})^{\otimes c}$. Hence, for every $e \in \N$, finitely generated $\FIO$-modules over $((\XO{1})^{\otimes c})^{(e)}$ and finitely generated $\FI$-modules over $((\XI{1})^{\otimes c})^{(e)}$ are noetherian.
\item
Consider the following subset of monomials of $(\XI{1})^{\otimes c}$
\[
Y =\{ x_{1, j_1} \cdots x_{c, j_c} \; | \; j_1,\ldots,j_c \in \N\}.
\]
We claim that the subalgebra $\Bb$ of $(\XI{1})^{\otimes c}$ that is generated by $Y$
 satisfies the assumption of Corollary~\ref{cor:noeth-subalgebra}. To see this, write a monomial in $(\XI{1})^{\otimes c}$ as
\[
\mu = x_{1, \lpnt}^{u_1} \cdots x_{c, \lpnt}^{u_c} \quad \text{ with } \; x_{i,\lpnt}^{u_i} = \prod_{j \in \N} x_{i, j}^{u_{i, j}},
\]
where $u_i = (u_{i, 1}, u_{i, 2},\ldots)$ is a sequence of non-negative integers with only finitely many positive entries. Set $|u_i| = \sum_{j \in \N} u_{i, j}$. Then the monomial $\mu$ is in $\Bb$ if and only if $|u_1| = |u_2| = \cdots = |u_c|$. Now the claimed divisibility condition follows. Hence, Corollary~\ref{cor:noeth-subalgebra} shows that every finitely generated $\FI$-module over $\Bb$ is noetherian. Note that, for $n \in \N$, the algebra $\Bb_{n+1}$ is the coordinate ring of the $c$-fold Segre product $\PP^n_K \times \cdots \times \PP^n_K$ of projective spaces over
$K$.
\end{enumerate}
\end{ex}

The above results motivate the following:

\begin{conj}
Every finitely generated $\FIO$-module (or $\FI$-module, respectively) over a noetherian $\FIO$-algebra (or $\FI$-algebra, respectively) is noetherian.
\end{conj}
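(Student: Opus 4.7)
My plan is to reduce the conjecture to a Gr\"obner-basis-style statement for free modules, following the template of Theorems~\ref{thm:finite-G-basis} and \ref{thm:fg-gives-noeth-mod}, and then to identify the combinatorial heart of the problem.

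First, by Propositions~\ref{prop:finite-gen} and \ref{prop:noeth-along-sequences}, it suffices to show that every finitely generated free $\FIO$-module $\Fo{d}$ (respectively, free $\FI$-module $\Fi{d}$) over a noetherian $\FIO$-algebra (respectively, $\FI$-algebra) $\Ab$ is noetherian. Indeed, any finitely generated $\Ab$-module is a quotient of a finite direct sum of such free modules, and noetherianity is preserved under quotients and finite direct sums.

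Since a noetherian $\Ab$ is in particular finitely generated, Proposition~\ref{prop:fg-FOI-algebra} provides a surjection $\pi\colon \Xb \to \Ab$ from a polynomial $\FIO$-algebra $\Xb = \XO{d_1} \otimes_K \cdots \otimes_K \XO{d_k}$. Let $\Fb$ denote the free $\FIO$-module over $\Xb$ that maps onto $\Fo{d}$ via $\pi$; then $\Ab$-submodules of $\Fo{d}$ correspond bijectively to $\Xb$-submodules of $\Fb$ containing $J = (\ker \pi) \cdot \Fb$. I would then attempt to extend the Gr\"obner basis formalism of Section~\ref{sec:groebner} to this setting: fix a monomial order on $\Fb$ and restrict attention to submodules $\Nc \subseteq \Fb$ with $\ini(J) \subseteq \ini(\Nc)$. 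The key technical step is to introduce a restricted $\FIO$-divisibility relation on the set of monomials of $\Fb$ that are not in $\ini(J)$, and to prove that this restricted relation is a well-partial-order. Granting this, the proof of Theorem~\ref{thm:finite-G-basis} would adapt to yield a finite Gr\"obner basis of $\Nc$, and hence its finite generation. Proposition~\ref{prop:noeth-subalgebra} realizes exactly this strategy for certain monomial subalgebras, and its argument should serve as the guiding model.

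The principal obstacle is establishing that the restricted divisibility is a well-partial-order. As soon as some $d_j \ge 2$, ordinary $\FIO$-divisibility on $\Xb$ fails to be a well-partial-order, as witnessed by the cycle monomials $u_i = x_{1,2} x_{2,3} \cdots x_{i-1,i} x_{1,i}$ used in Section~\ref{sec:groebner} to show that $\XO{2}$ is not noetherian. The noetherianity of $\Ab$ should force $\ini(J)$ to contain a cofinal portion of every such infinite antichain, and the task is to distill this heuristic into a combinatorial criterion that can be fed into a Higman-type argument as in Section~\ref{sec:orders}. A complementary fallback would be to look for a partial converse to Theorem~\ref{thm:noeth-and-limits}: by Corollary~\ref{cor:noeth-limit}, $K[\Ab]$ is $\Inc$-noetherian, so $\lim \Nc$ is finitely generated up to $\Inc$-orbits, and one could try to lift orbit generators to $\FIO$-generators of $\Nc$, using Lemma~\ref{lem:gen-vs-stability} to control stability indices uniformly. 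The remark after Proposition~\ref{prop:fin-gen-to-limit} shows that such a descent is not unconditional, so some extra input from the noetherianity of $\Ab$ must be exploited; isolating this input appears to be the true crux of the conjecture.
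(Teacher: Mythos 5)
The statement you are asked to prove is labeled a \textbf{conjecture} in the paper; the authors do not supply a proof, so there is no argument of theirs to compare yours against. What you have written is, accordingly, a research plan rather than a proof, and you are candid about this. Your opening reduction is correct and worth stating: by Proposition~\ref{prop:finite-gen} every finitely generated module is a quotient of a finite direct sum of the modules $\Fo{d}$, and by Proposition~\ref{prop:noeth-along-sequences} noetherianity passes to quotients and finite direct sums, so the conjecture is equivalent to noetherianity of every $\Fo{d}$ over $\Ab$; and since a noetherian $\Ab$ is finitely generated, Proposition~\ref{prop:fg-FOI-algebra} gives a presentation $\Xb \twoheadrightarrow \Ab$ with $\Xb$ a polynomial $\FIO$-algebra, identifying $\Ab$-submodules of $\Fo{d}$ with $\Xb$-submodules of the corresponding free $\Xb$-module containing $J = (\ker\pi)\cdot\Fb$.

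The genuine gap --- which you correctly locate but do not bridge --- is the claim that the noetherianity of $\Ab$ forces the restriction of $\FIO$-divisibility to monomials outside $\ini(J)$ to be a well-partial-order. Two separate points make this doubtful as stated. First, even in the case $d=0$, noetherianity of $\Ab$ only asserts the ascending chain condition on $\FIO$-ideals of $\Ab$; in the classical finite-variable picture the analogous w.p.o.\ property is Dickson's Lemma, which holds unconditionally and is therefore never \emph{deduced} from noetherianity of a quotient, so there is no precedent for the implication you need, and no argument is offered for why $\ini(J)$ must truncate every infinite antichain. Second, for $d\ge 1$ the combinatorics in Proposition~\ref{prop:monomials-in-Fd} depend essentially on tracking the image of $\pi$ via the factor $s'(\mu)\in(\N_0^c\times\N)^d$; but $\ini(J)$ lives in the coefficient factor $\Xb$ and says nothing directly about the positions $\pi(1),\dots,\pi(d)$, so even if the $d=0$ restricted relation were a w.p.o., the passage to $\Fo{d}$ is an additional step that is not automatic. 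Proposition~\ref{prop:noeth-subalgebra} is not a safe model here either: there the hypothesis on the monomial subalgebra is designed precisely so that the \emph{same} divisibility w.p.o.\ on $\Pb$ restricts, whereas in your setting the relevant set of monomials is a complement of an initial module and carries no such closure property. Your fallback via $\Inc$-noetherianity of $K[\Ab]$ runs into the obstruction you yourself cite (the failure of the converse to Proposition~\ref{prop:fin-gen-to-limit}), so it too requires a new idea. In short: the reduction is sound, the plan is a sensible place to probe, but the crux is genuinely open and your proposal does not close it.
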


Recall that $K[X] \cong \lim \Pb_n$. Hence, Corollary~\ref{cor:noeth-limit} and Theorem~\ref{thm:fg-gives-noeth-mod} imply:

\begin{cor}
\label{cor:limits-alg-noeth}
Let $K$ be a noetherian ring. For every $c \in \N$, one has:
\begin{enumerate}
\item $K[X]$ is an $\Inc$-noetherian $K$-algebra.

\item $K[X]$ is an $\Sym (\infty)$-noetherian $K$-algebra.
\end{enumerate}
\end{cor}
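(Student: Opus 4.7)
The plan is to combine Theorem~\ref{thm:fg-gives-noeth-mod} with Corollary~\ref{cor:noeth-limit}, both of which are already in hand. The key observation is that noetherianity of the $\FIO$-algebra (or $\FI$-algebra) $\Pb$ over itself is literally the $d=0$ case of the statement ``every finitely generated module is noetherian,'' since $\Pb = \Fo{0}$ as a module over itself.

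For part (i), I would first view $\Pb \cong (\XO{1})^{\otimes c}$ as an $\FIO$-algebra and notice that it is a finitely generated $\FIO$-module over itself (generated by the identity of $\Pb_0 = K$). Theorem~\ref{thm:fg-gives-noeth-mod}(i) then yields that $\Pb$ is a noetherian $\FIO$-algebra. Applying Corollary~\ref{cor:noeth-limit} to $\Ab = \Pb$ with $\Pi = \Inc$ gives that $K[\Pb]$ is $\Inc$-noetherian, i.e.\ every $\Inc$-invariant ideal of $K[\Pb]$ is generated by finitely many $\Inc$-orbits. It remains to identify $K[\Pb]$ with $K[X]$, which was already recorded in Example~\ref{exa:OI-alg}(ii) (and reiterated just before the corollary).

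For part (ii), the same recipe applies verbatim, now regarding $(\XI{1})^{\otimes c}$ as an $\FI$-algebra: it is finitely generated as an $\FI$-module over itself, hence noetherian by Theorem~\ref{thm:fg-gives-noeth-mod}(ii), and Corollary~\ref{cor:noeth-limit} (with $\Pi = \Sym(\infty)$) then yields that $K[(\XI{1})^{\otimes c}] \cong K[X]$ is $\Sym(\infty)$-noetherian.

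Since the corollary really is just the combination of two results stated immediately above, there is no substantive obstacle; the only point requiring a sentence of care is the identification of the colimit $K[\Pb]$ with the polynomial ring $K[X]$ in infinitely many variables, and the fact that the $\Inc$-action (respectively $\Sym(\infty)$-action) on $K[\Pb]$ provided by Proposition~\ref{prop:action-on-limits} coincides with the natural action on $K[X]$ given by $\pi \cdot x_{i,j} = x_{i,\pi(j)}$. Both identifications are routine unwindings of the definitions.
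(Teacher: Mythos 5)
Your proposal is correct and follows essentially the same route as the paper: the paper also deduces the corollary by noting that Theorem~\ref{thm:fg-gives-noeth-mod} (applied to $\Pb \cong (\XO{1})^{\otimes c}$, resp.\ $(\XI{1})^{\otimes c}$, viewed as a finitely generated module over itself) makes the algebra noetherian, and then invokes Corollary~\ref{cor:noeth-limit} together with the identification $K[X] \cong \lim \Pb_n$ from Example~\ref{exa:OI-alg}(ii). Your extra sentence checking that the induced $\Inc$- (resp.\ $\Sym(\infty)$-) action on the colimit agrees with the natural one on $K[X]$ is a reasonable point of care that the paper leaves implicit.
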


Note that this result extends \cite[Theorems 1.1 and 3.1]{HS} from coefficients in a field $K$ to arbitrary noetherian rings. Part (ii) was shown in the special case $c=1$ in \cite[Theorem 1.1]{AH}.


\section{Stabilization of Syzygies}
\label{sec:syzygies}

The main goal of this section is to study homological properties of
a finitely generated $\FIO$-module (or $\FI$-module, respectively) $\M$
in the cases that the underlying category of modules is noetherian, that is,
every finitely generated module is noetherian.
See Theorem \ref{thm:fg-gives-noeth-mod} or Corollary \ref{cor:church} for such situations.

Moreover, these homological properties will be compared with the corresponding ones of the modules in the family $(\M_m)_{m\geq 0}$. In the following we fix a noetherian $\FIO$-algebra (or a noetherian $\FI$-algebra, respectively) $\Ab$ over a noetherian commutative ring $K$ such that the category of $\FIO$-modules (or $\FI$-modules, respectively) is noetherian.

A classical research topic in commutative algebra is the study of syzygies and induced invariants such as Betti numbers or the Castelnuovo-Mumford regularity. Related results yield applications
to interesting problems in commutative as well as to other fields such as algebraic geometry or representation theory.

The framework developed in this manuscript yields an approach to study syzygies for the family of modules $(\M_m)_{m\geq 0}$ to which the next result can be applied.

\begin{thm}
\label{thm:stabilization-syz}
Let $\M$ be a finitely generated $\FIO$-module (or $\FI$-module, respectively) over $\Ab$. There exists a projective resolution $\Fb_{\lpnt}$ of $\M$ such that $\Fb_{p}$ is finitely generated for every $p\in \N_0$.
\end{thm}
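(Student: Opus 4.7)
The plan is to construct a free resolution of $\M$ inductively with each term finitely generated, then observe that finitely generated free modules are projective in this category. First, I would invoke Proposition~\ref{prop:finite-gen} to obtain a surjection $\Fb_0 \twoheadrightarrow \M$ with $\Fb_0 = \bigoplus_{i=1}^{k_0} \Fo{d_i}$ (or its $\FI$-analog) a finitely generated free module.

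Second, I would verify that each $\Fo{d}$ is projective in $\FIO$-$\Mod(\Ab)$, which is what upgrades the free resolution to a projective one. The key observation is that $\Fo{d}$ corepresents the evaluation-at-$d$ functor $\Nb \mapsto \Nb_d$: by Definition~\ref{def:free} every basis element $e_\pi \in \Fo{d}$ equals $\Fo{d}(\pi)(e_{\id_{[d]}})$, so an $\FIO$-morphism $\Fo{d} \to \Nb$ is uniquely determined by the image of $e_{\id_{[d]}}$ in $\Nb_d$, and that image may be chosen freely. Since surjections in $\FIO$-$\Mod(\Ab)$ are pointwise surjective by Remark~\ref{rem:abelian-cat-submodules}(iii), any morphism $\Fo{d} \to \Nb$ lifts through any epimorphism $\Nb' \twoheadrightarrow \Nb$ by lifting the single image of $e_{\id_{[d]}}$ to $\Nb'_d$. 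Direct sums of projectives being projective, $\Fb_0$ is projective.

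For the inductive step, suppose an exact sequence $\Fb_p \to \Fb_{p-1} \to \cdots \to \Fb_0 \to \M \to 0$ has been built with each $\Fb_i$ a finitely generated free $\FIO$-module. Let $Z_p$ denote the kernel of $\Fb_p \to \Fb_{p-1}$ (or of $\Fb_0 \to \M$ when $p=0$). Since $\Fb_p$ is finitely generated, the standing hypothesis of this section (supplied by Theorem~\ref{thm:fg-gives-noeth-mod} or Corollary~\ref{cor:church} in the cases of interest) forces $\Fb_p$ to be noetherian, so $Z_p$ is finitely generated. Proposition~\ref{prop:finite-gen} then furnishes a surjection $\Fb_{p+1} \twoheadrightarrow Z_p$ from a finitely generated free $\FIO$-module $\Fb_{p+1}$. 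Splicing these surjections produces the desired resolution; the $\FI$-case is identical, replacing $\Fo{d}$ by $\Fi{d}$ throughout.

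The main obstacle, such as it is, lies in verifying the projectivity of $\Fo{d}$, since this is the only point where the functorial structure of the module category intervenes in a non-formal way; once that is in hand, everything else is a mechanical iteration driven by the noetherian hypothesis. No uniform bounds on generator degrees or on homological dimension are claimed here, so no additional input beyond Proposition~\ref{prop:finite-gen} and the noetherianity assumption is required.
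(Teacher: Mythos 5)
Your proposal is correct and follows essentially the same route as the paper: a surjection from a finitely generated free module via Proposition~\ref{prop:finite-gen}, the standing noetherianity hypothesis to conclude each kernel is finitely generated, and induction. The only difference is that you spell out why $\Fo{d}$ (resp.\ $\Fi{d}$) is projective, via the corepresentation $\Hom(\Fo{d},\Nb)\cong \Nb_d$ and pointwise surjectivity, a point the paper simply asserts with ``free, hence projective''; your verification is accurate and consistent with Definition~\ref{def:free} and Remark~\ref{rem:abelian-cat-submodules}.
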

\begin{proof}
By Proposition \ref{prop:finite-gen}, there is a presentation of $\M$ using a finitely generated free $\FIO$-module (or $\FI$-module, respectively) which is in particular a projective module. Since the category of $\FIO$-modules (or $\FI$-modules, respectively) under consideration is noetherian, the kernel of that presentation is again finitely generated. Thus we can construct inductively a projective resolution $\Fb_{\lpnt}$ of $\M$ such that $\Fb_{p}$ is finitely generated for every $p\in \N_0$. This concludes the proof.
\end{proof}

\begin{rem}
\
\begin{enumerate}
\item
As in \cite[Theorem A]{S} one could informally say, that given any integer $p \ge 0$, there is a finite list of \emph{master $p$-syzygies} of $\M$ from which all $p$-syzygies of $\M_m$ (defined by $\Fb_{\lpnt}$) can be obtained by combinatorial substitution procedures induced from the $\FIO$- or $\FI$-structure,
describing the $p$-th syzygies of $\M_m$ in terms of ``earlier'' ones. Indeed, theses master syzygies are the finitely many generators of $\Fb_{p}$.

\item
If $\M$ is a $\Z$-graded module over a standard $\Z$-graded algebra $\Ab$ (see Definition~\ref{def:ca-cases} below), then the constructed resolution $\Fb_{\lpnt}$ can be chosen to consist only of graded modules with all maps being homogeneous of degree 0. We refer to this sequence as  a \emph{graded resolution} of $\M$.

\item
Theorem \ref{thm:stabilization-syz} and its applications
may be seen as a general framework to get results
as pioneered in \cite[Theorem A]{S}. Theorem \ref{thm:stabilization-syz} above does not cover
\cite[Theorem A]{S}, because the latter result applies to some modules over a non-noetherian $\FI$-algebra.
Instead, Theorem \ref{thm:stabilization-syz} applies to \emph{all} finitely generated modules over a
noetherian $\FI$-algebra.
\end{enumerate}
\end{rem}

\begin{ex}
\label{exa:det}
For a simple specific instance illustrating the above result, fix an integer $c \ge 1$ and consider generic
$c \times n$ matrices $X_n$ whose entries are the variables $x_{i, j}$ with $i \in [c]$ and $j \in [n]$. Fix any integer
$t$ with $1 \le t \le c$, and denote by $I_n$ the ideal generated by the $t \times t$ minors of $X_n$. The
sequence $(I_n)_{n \in \N_0}$ determines an ideal of $(\XI{1})^{\otimes c}$. Hence,
Theorem \ref{thm:stabilization-syz} shows that for every integer $p \ge 0$, there is an integer $n_p$ such that, for
every $n \ge n_p$, the $p$-th syzygies of $I_n$ over $(\XI{1}_n)^{\otimes c}$ can be obtained from the $p$-th syzygies
of $I_{n_p}$ over $(\XI{1}_{n_p})^{\otimes c}$.

Notice that this is true regardless of the characteristic even though it is known that resolutions of determinantal ideals behave differently in positive characteristic (see \cite{Ha}).
\end{ex}

In the following, if we consider a projective resolution $\Fb_{\lpnt}$ of a finitely generated module $\M$,
then we always construct $\Fb_{\lpnt}$ using Proposition \ref{prop:finite-gen} as in the proof of Theorem \ref{thm:stabilization-syz}. In particular, all modules in such a resolution are free in the sense of Definition \ref{def:free}.

The next goal is to study Betti numbers. For this we have to introduce some notations and discuss some preparations before presenting our main results.

Given two $\FIO$-modules (or $\FI$-modules, respectively) $\M$ and $\Nb$, their \emph{tensor product}
is the $\FIO$-module (or $\FI$-module, respectively)
$\M \tensor_\Ab \Nb$, defined componentwise as
$$
\M_S \tensor_{\Ab_S} \Nb_S
$$
for every totally ordered finite set $S$ (or finite set $S$, respectively)
with obvious morphisms induced by the ones given by $\M$ and $\Nb$.
Since taking the tensor product is right exact, this yields $\FIO$-modules (or $\FI$-modules, respectively)
$$
\Tor_p^\Ab(\M, \Nb) \text{ for } p\geq 0
$$
defined as left-derived functors. The first crucial observation is:
\begin{lem}
\label{lem:fg-tor}
Let $\M,\Nb$ be finitely generated $\FIO$-modules (or $\FI$-modules, respectively) over $\Ab$
and let $p\in \N_0$. Then $\Tor_p^\Ab(\M, \Nb)$ is finitely generated. Moreover, we have
$$
\Tor_p^\Ab(\M, \Nb)_S=\Tor_p^{\Ab_S}(\M_S, \Nb_S)
$$
for every totally ordered finite set $S$ (or finite set $S$, respectively).
\end{lem}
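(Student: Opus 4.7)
The plan is to compute $\Tor_p^{\Ab}(\M, \Nb)$ via a projective resolution of $\M$ by finitely generated free $\FIO$-modules (respectively $\FI$-modules), and then to read off both finite generation and the componentwise identity from the resulting complex.

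First I invoke Theorem~\ref{thm:stabilization-syz} to obtain a resolution $\Fb_{\lpnt} \to \M \to 0$ in which each $\Fb_p$ is a finite direct sum of free modules $\Fo{d_i}$ (respectively $\Fi{d_i}$). These free modules are projective in $\FIO$-$\Mod(\Ab)$: a morphism $\Fo{d} \to \Nb''$ is determined by the image of $e_{\id_{[d]}} \in \Fo{d}_d$ in $\Nb''_d$, so a surjection $\Nb \to \Nb''$ induces a surjection $\Nb_d \to \Nb''_d$ along which any such morphism lifts. Hence $\Tor_p^{\Ab}(\M, \Nb)$ is the $p$-th homology of $\Fb_{\lpnt} \otimes_{\Ab} \Nb$.

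For the componentwise identity, fix a (totally ordered) finite set $S$. By the definition of the tensor product, $(\Fb_{\lpnt} \otimes_{\Ab} \Nb)_S = \Fb_{\lpnt, S} \otimes_{\Ab_S} \Nb_S$. Because exactness in $\FIO$-$\Mod(\Ab)$ is pointwise (Remark~\ref{rem:abelian-cat-submodules}(iii)), the complex $\Fb_{\lpnt, S} \to \M_S \to 0$ is exact over $\Ab_S$, and each $\Fb_{p, S}$ is a free $\Ab_S$-module since $\Fo{d}_S = \bigoplus_{\pi} \Ab_S e_{\pi}$ is free (and analogously for $\Fi{d}$). Thus $\Fb_{\lpnt, S}$ is an $\Ab_S$-free resolution of $\M_S$, and
$$
\Tor_p^{\Ab}(\M, \Nb)_S \;=\; H_p\bigl(\Fb_{\lpnt, S} \otimes_{\Ab_S} \Nb_S\bigr) \;=\; \Tor_p^{\Ab_S}(\M_S, \Nb_S).
$$

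For finite generation it suffices to show that $\Fb_p \otimes_{\Ab} \Nb$ is finitely generated, since then $\Tor_p^{\Ab}(\M, \Nb)$ is a subquotient of a finitely generated module in a noetherian category and hence itself finitely generated. As tensor products distribute over finite direct sums, this reduces further to the claim that $\Fo{d} \otimes_{\Ab} \Nb$ is finitely generated whenever $\Nb$ is, which I expect to be the main technical point. Given generators $\nu_1,\ldots,\nu_k$ of $\Nb$ with $\nu_j \in \Nb_{m_j}$, a simple tensor $e_{\pi} \otimes \Nb(\delta)(\nu_j)$ in component $n$ factors through the $\FIO$-morphism $[k] \hookrightarrow [n]$ whose image equals $\pi([d]) \cup \delta([m_j])$; since $k \le d + m_j$, only finitely many isomorphism types of such pairs $(\pi,\delta)$ arise, and these yield the required finite generating set. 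The rest of the argument is a routine application of the noetherian hypothesis together with standard homological machinery.
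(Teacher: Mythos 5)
Your proof takes the same route as the paper: resolve $\M$ by finitely generated free modules via Theorem~\ref{thm:stabilization-syz}, compute $\Tor$ as homology of $\Fb_{\lpnt}\otimes_{\Ab}\Nb$, use noetherianity to get finite generation, and observe that the complex specializes pointwise to a free resolution of $\M_S$ over $\Ab_S$. The one place you go beyond the paper is that the paper simply asserts $\Fb_{\lpnt}\otimes_{\Ab}\Nb$ is ``a complex of finitely generated modules'' without proof; you actually justify this by showing $\Fo{d}\otimes_{\Ab}\Nb$ is generated in degrees at most $d + \max_j m_j$, factoring a simple tensor $e_{\pi}\otimes\Nb(\delta)(\nu_j)$ at level $n$ through the $\FIO$-morphism $[k]\hookrightarrow[n]$ with image $\pi([d])\cup\delta([m_j])$. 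That is a genuine gap in the paper's write-up and your argument fills it correctly; the only cosmetic issue is that you reuse the symbol $k$ both for the number of generators of $\Nb$ and for the size of the union, but the intended meaning is clear.
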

\begin{proof}
By Theorem \ref{thm:stabilization-syz} there exists a projective resolutions $\Fb_{\lpnt}$ of $\M$ such that $\Fb_{p}$ is finitely generated for every $p\in \N_0$. Since $\Tor_p^\Ab(\M, \Nb)$ is the homology of $\Fb_{\lpnt} \tensor_{\Ab} \Nb$, a complex of finitely generated modules, and the underlying module category is noetherian, it follows that $\Tor_p^\Ab(\M, \Nb)$ is finitely generated.

Observe, that we have chosen $\Fb_{\lpnt}$ in such a way that $\Fb_{\lpnt,S}$ is a projective resolution of $\M_S$ over $\Ab_S$ using only finitely generated free modules. The additional claim is then a consequence of an isomorphism of $\Ab_S$-modules
$$
H_p(\Fb_{\lpnt} \tensor_{\Ab} \Nb)_S=H_p(\Fb_{\lpnt,S} \tensor_{\Ab_S} \Nb_S)
$$
for every totally ordered finite set $S$ (or finite set $S$, respectively).
\end{proof}

Next we define:
\begin{defn}
\label{def:ca-cases}
Let $\Ab$ be a noetherian $\FIO$-algebra (or $\FI$-algebra, respectively) over $K$.
\begin{enumerate}
\item
We call $\Ab$ a \emph{local}, if every $\Ab_S$ is a commutative local ring $(\Ab_S,\mm_S)$ and all morphisms $\Ab(\epsilon)$ are local homomorphisms. Here we always take $K=\Z$.
\item
Let $K$ be a field.
We say $\Ab$ is \emph{standard graded}, if every $\Ab_S$ is a finitely generated standard $\Z$-graded $K$-algebra, that is, $\Ab_S=\bigoplus_{d\geq 0} \Ab_{S,d}$ is a commutative finitely generated $\Z$-graded $K$-algebra generated in degree 1 with $\Ab_{S,0}=K$,
and all morphisms $\Ab(\epsilon)$ are homogeneous of degree 0.

We write $\mm_S=\bigoplus_{d\geq 1} \Ab_{S,d}$ for the uniquely determined graded maximal ideal of $\Ab_S$.
\end{enumerate}
\end{defn}

From now on we assume that $\Ab$ is either local or standard graded.
In the local case let $\Kb$ be the $\FIO$-module (or $\FI$-module, respectively)
defined by setting $\Kb_S=\Ab_S/\mm_S$ together with the natural morphisms. In the standard graded case we set similarly $\Kb_S=\Ab_S/\mm_S \cong K$.

\begin{defn}
\
\begin{enumerate}
\item
Let $\Ab$ be a local $\FIO$-algebra (or $\FI$-algebra, respectively),
and let $\M$ be a finitely generated $\FIO$-module (or $\FI$-module, respectively) over $\Ab$.
For every $p\in \N_0$ and every totally ordered finite set (or finite set, respectively) $S$ the \emph{Betti numbers} of $\M$
are defined as
$$
\beta_{S,p}^{\Ab}(\M)=\dim_{\Ab_S/\mm_S} \Tor_p^\Ab(\M, \Kb)_S.
$$
\item
Let $\Ab$ be a standard graded $\FIO$-algebra (or $\FI$-algebra, respectively),
and let $\M$ be a finitely generated graded $\FIO$-module (or $\FI$-module, respectively).
For every $p\in \N_0, j\in \Z$ and every totally ordered finite set (or finite set, respectively) $S$
we call
$$
\beta_{S,p,j}^{\Ab}(\M)=\dim_{K} \bigl(\Tor_p^\Ab(\M, \Kb)_S\bigr)_j
$$
the \emph{graded Betti number} of $\M$ with respect to $(S,p,j)$. Here we also set
$\beta_{S,p}^{\Ab}(\M)=\sum_j \beta_{S,p,j}^{\Ab}(\M)$
for the \emph{total Betti number} respect to $(S,p,j)$.
\end{enumerate}
\end{defn}

Our main result concerning stabilization of Betti numbers has its strongest form in the graded context:

\begin{thm}
\label{thm:stabilization}
Let $\Ab$ be standard graded $\FIO$-algebra (or $\FI$-algebra, respectively)
and let $\M$ be a finitely generated graded $\FIO$-module (or $\FI$-module, respectively).
Then for any $p\in \N$, one has
$$
m (\M, p)=
\max\{ j\in \Z:
\beta_{S,p,j}^{\Ab}(\M)\neq 0 \text{ for some } S
\}<\infty.
$$
Moreover, there are integers $j_0 (\M, p) < \cdots < j_t (\M, p) \le m (\M, p)$
%
such that for any $S$ with $|S|\gg 0$ we have
$$
\beta_{S,p,j}^{\Ab}(\M)
\neq 0
\text{ if and only if } j\in \{j_0 (\M, p),\dots,j_t (\M, p)\}.
$$
\end{thm}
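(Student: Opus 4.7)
My plan is to replace $\M$ by $\Nb := \Tor_p^\Ab(\M, \Kb)$ and reduce to a question about finitely generated $\FIO$- (resp.\ $\FI$-) modules with constant coefficients, for which representation-stability results are available.

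First, by Lemma~\ref{lem:fg-tor} the module $\Nb$ is a finitely generated graded $\FIO$- (resp.\ $\FI$-) module whose components are $\Nb_S = \Tor_p^{\Ab_S}(\M_S, K)$. Since the maximal ideal $\mm_S$ annihilates each $\Tor_p^{\Ab_S}(\M_S, K)$, the $\Ab$-action on $\Nb$ factors through the constant algebra $\Kb$; so I may regard $\Nb$ as a finitely generated graded module over $\Kb$. A finite generating set lies in finitely many internal degrees, forming a finite set $D \subset \Z$. Because all structure morphisms $\Nb(\eps)$ and the $\Kb$-action are homogeneous of degree zero, every nonzero homogeneous component of $\Nb$ has internal degree in $D$. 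This at once gives $m(\M, p) \leq \max D < \infty$, yielding the finiteness assertion.

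Next I will fix $j \in D$ and consider the subfunctor $\Nb^{(j)} \colon S \mapsto \Nb_{S,j}$. Degree-preservation of the structure maps and the $\Kb$-action makes $\Nb^{(j)}$ into an $\FIO$- (resp.\ $\FI$-) submodule of $\Nb$ over $\Kb$, finitely generated by the degree-$j$ parts of the generators of $\Nb$; equivalently, each $\Nb^{(j)}$ is a finitely generated constant-coefficient module over the field $K$. At this point I will invoke the polynomial-growth theorem of representation stability (Church--Ellenberg--Farb--Nagpal \cite{CEFN} in the $\FI$-case; Sam--Snowden \cite{SS-14} in the $\FIO$-case): the Hilbert function $n \mapsto \dim_K \Nb^{(j)}_{[n]}$ agrees with a polynomial $P_j \in \Q[n]$ for $n \gg 0$. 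A polynomial is either identically zero or eventually nonvanishing, so for $n$ sufficiently large $\dim_K \Nb^{(j)}_{[n]}$ is either always zero or always positive. Letting $\{j_0 < \cdots < j_t\} \subseteq D$ consist of those $j$ with $P_j \not\equiv 0$, and choosing $n$ past all the finitely many thresholds arising from $D$, I obtain the claimed equivalence.

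The main obstacle is the polynomial-growth input, which is not established within this paper and must be imported from the representation-stability literature. A self-contained substitute would need only the weaker dichotomy ``$\Nb^{(j)}_{[n]}$ is eventually zero or eventually nonzero,'' but even this is subtle for $\FIO$-modules: transition maps $\Nb(\iota_{n, n+1})$ need not be injective, so nonvanishing in low degrees does not obviously propagate upward, and one would have to argue via the torsion submodule defined by the kernel of the colimit map together with the noetherian property established in Section~\ref{sec:groebner}.
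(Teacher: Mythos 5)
Your reduction --- to $\Nb := \Tor_p^\Ab(\M, \Kb)$, a finitely generated graded module over the constant-coefficient algebra $\Kb$ by Lemma~\ref{lem:fg-tor}, and then to the degree-$j$ slices $\Nb^{(j)}\colon S\mapsto(\Nb_S)_j$ --- is the same as the paper's, as is the derivation of $m(\M,p)<\infty$ from the finiteness of the degree set $D$ of a homogeneous generating system. You differ in how you obtain the eventual dichotomy for each $\Nb^{(j)}$: you import the polynomial-growth theorem for Hilbert functions of finitely generated $\FI$- and $\FIO$-modules, whereas the paper's (terse) argument is elementary and self-contained. If $\Nb^{(j)}$ is generated in degrees $\le d$, then Lemma~\ref{lem:decompose} shows that for every $n'>n\ge d$ every morphism in $\Hom_{\FIO}([n_i],[n'])$ factors through $[n]$, so $\Nb^{(j)}_{n'}$ is $K$-spanned by $\{\Nb(\eps)(q): q\in\Nb^{(j)}_n,\ \eps\in\Hom_{\FIO}([n],[n'])\}$; in particular $\Nb^{(j)}_n=0$ forces $\Nb^{(j)}_{n'}=0$. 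Hence the finite set $\{j\in D: \Nb^{(j)}_n\neq 0\}$ is weakly decreasing in $n$ once $n\ge d$ and therefore stabilizes, producing $j_0<\cdots<j_t$. This also answers the worry in your final paragraph: the direction that matters is that \emph{vanishing} propagates upward past the generation degree --- which needs only finite generation and Lemma~\ref{lem:decompose}, not injectivity of $\Nb(\iota_{n,n+1})$ --- rather than that nonvanishing propagates; no torsion submodule or colimit argument is required. So your proof is correct but appeals to heavier outside machinery than the statement needs, and if you keep the polynomial-growth route you should pin down the exact $\FIO$-module version of the growth theorem you invoke, since \cite[Theorem~7.1.2]{SS-14}, the result this paper cites from that source, is a noetherianity statement rather than a Hilbert-function statement.
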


\begin{proof}
By Lemma \ref{lem:fg-tor}, $\Nb=\Tor_p^\Ab(\M, \Kb)$ is
a finitely generated graded $\FIO$-module (or $\FI$-module, respectively).
In this situation one can choose a finite system
of generators $G \subset \coprod_{n\geq 0} \Nb_n$ which
is homogeneous with respect to the (internal) gradings of the modules $\Nb_n$.

As noted in Remark \ref{rem:graded-mod}(i), the maps $\Nb(\epsilon)$
are homogeneous of degree zero. As a consequence the degrees of
the elements in the induced homogenous systems of generators of $\Nb_S$
are the same for every totally ordered finite set (or finite set, respectively) $S$.
Some generators may only be needed for $|S|$ small
and become redundant for $|S|\gg 0$. But for $|S|\gg 0$ the degree sequence of
a minimal homogenous systems of generators of $\Nb_S$ has to stabilize because $\Nb$ has a finite generating set.

Indeed, this argument holds for every
finitely generated graded $\FIO$-module (or $\FI$-module, respectively).
In our case, $\Nb_S$ is a finitely dimensional $K$-vector space
and the degrees of minimal generators correspond to non-zero Betti numbers $\beta_{S,i,j}^{\Ab}(\M)$.
All statements of the theorem follow now immediately.
\end{proof}

\begin{rem}
\label{rem:stabilizationetc}
\

\begin{enumerate}
\item
There exists also a local version of Theorem~\ref{thm:stabilization}
in the sense that for $|S|\gg 0$
the $p$-th syzygy modules of $\M_S$ stabilizes, 
i.e.~there is a combinatorial pattern induced from the $\FIO$- or $\FI$-structure
describing the $p$-th syzygies in terms of ``earlier'' ones.

\item
Results as in Theorem~\ref{thm:stabilization} were previously known for Segre products (see \cite[Theorem A]{S} and \cite[Theorem 9.3.2]{SS-14}), and for secant varieties of Veronese varieties (see \cite[Theorem 1.1]{S-16} and \cite[Theorems 3.3 and 3.6]{S-17}).

\item
In view if the stabilization results for Betti numbers as in Theorem \ref{thm:stabilization}
it is tempting to hope for a similar statement for the
Castelnuovo-Mumford regularity $\reg M_S$ of $M_S$.
In particular, see \cite[Theorem A]{CE} for a related result where
$\Ab=\XI{0}$. But an answer to this question in general must be more complicated.
See Example \ref{ex:Koszul-lin2} below and its discussion which shows
that stabilization in a strict sense can not be expected.
\end{enumerate}
\end{rem}

\begin{cor}
Let $\Ab= (\XO{1})^{\otimes c}$ or $\Ab=(\XI{1})^{\otimes c}$.
Then the conclusions of Theorems \ref{thm:stabilization-syz} and \ref{thm:stabilization} hold.
\end{cor}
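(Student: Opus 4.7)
The plan is to verify that each of the two algebras $\Ab = (\XO{1})^{\otimes c}$ and $\Ab=(\XI{1})^{\otimes c}$ meets the standing hypotheses of Section~\ref{sec:syzygies}, at which point the two theorems apply verbatim. Recall that these hypotheses are: $\Ab$ is a noetherian $\FIO$-algebra (resp.\ $\FI$-algebra) over a noetherian commutative ring $K$, and the ambient category of $\FIO$-modules (resp.\ $\FI$-modules) over $\Ab$ is noetherian in the sense that every finitely generated module is noetherian. For Theorem~\ref{thm:stabilization} one additionally needs $\Ab$ to be standard graded over a field $K$ in the sense of Definition~\ref{def:ca-cases}(ii).

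The noetherianity of the module category is precisely Theorem~\ref{thm:fg-gives-noeth-mod}: part~(i) handles $\Ab=(\XO{1})^{\otimes c}$ and part~(ii) handles $\Ab=(\XI{1})^{\otimes c}$. Specializing this to $\M=\Ab$ itself shows that $\Ab$ is noetherian as a module over itself, so $\Ab$ is also a noetherian $\FIO$-algebra (resp.\ $\FI$-algebra). With these two verifications in hand, Theorem~\ref{thm:stabilization-syz} applies immediately, yielding the existence of a projective resolution $\Fb_{\lpnt}$ of any finitely generated $\M$ with every $\Fb_p$ finitely generated.

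For the graded statement of Theorem~\ref{thm:stabilization}, assume $K$ is a field. For each $S$ with $|S|=n$, the algebra $\Ab_S$ is isomorphic to the polynomial ring $K[x_{i,j} : i \in [c],\ j \in [n]]$ with its natural standard $\Z$-grading, generated in degree one with $(\Ab_S)_0 = K$, and hence finitely generated standard graded. The structural morphism $\Ab(\eps)$ sends each degree-one generator $x_{i,j}$ to a degree-one generator $x_{i,\eps(j)}$, so it is homogeneous of degree zero. Thus $\Ab$ is standard graded in the sense of Definition~\ref{def:ca-cases}(ii), and Theorem~\ref{thm:stabilization} applies to every finitely generated graded $\FIO$-module (resp.\ $\FI$-module) over $\Ab$.

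Since the argument is purely a check of hypotheses drawing on results already established (Theorem~\ref{thm:fg-gives-noeth-mod} and the explicit structure of the polynomial algebras $\XO{1}$ and $\XI{1}$), there is no real obstacle; the only minor points to be careful about are recording that $\Ab$ itself is noetherian (not merely that the module category is) and that the tensor product $(\XO{1})^{\otimes c}$ is again standard graded, both of which are routine.
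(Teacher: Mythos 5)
Your proposal is correct and matches the paper's approach, which is simply to invoke Theorem~\ref{thm:fg-gives-noeth-mod} together with Theorems~\ref{thm:stabilization-syz} and \ref{thm:stabilization}; you spell out the hypothesis-checking (noetherianity of $\Ab$ itself, noetherianity of the module category, and the standard-graded structure) more explicitly than the paper does, but the content is the same.
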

\begin{proof}
Apply Theorem \ref{thm:fg-gives-noeth-mod} and Theorems \ref{thm:stabilization-syz} and \ref{thm:stabilization}.
\end{proof}

As pointed out in \cite{NR} every polynomial ideal in finitely many variables gives rise to various $\FIO$- and $\FI$-modules.

\begin{ex}
\
\begin{enumerate}
\item
To illustrate this point, consider non-zero polynomials $f_5, f_6, f_7, f_8$ in $K[x_1, x_2, x_3]$ of degrees 5, 6, 7, 8, resp. Let $J$ be the ideal of $K[x_1,x_2,\ldots]$ that is generated by the $\Sym (\infty)$-orbits of $f_5, f_6, f_7, f_8$. It determines an $\FI$-ideal $I$ of $\XI{1}$, where $I_n = J \cap K[x_1,\ldots,x_n]$. Hence $I_n$ has a generating set of $4 \cdot n (n-1)(n-2)$ polynomials. Theorems \ref{thm:stabilization-syz} and \ref{thm:stabilization} say that, for any integer $p \ge 0$, there are finitely many master syzygies that determine all $p$-th syzygies of $I_n$ over $K[x_1,\ldots,x_n]$ for $n \gg 0$. Moreover, by \cite[Theorem 7.10]{NR}, the dimension of $K[x_1,\ldots,x_n]/I_n$ is eventually a linear function in $n$, and $\deg I_n$ is eventually an exponential function in $n$. In particular, $\lim_{n \to \infty} \sqrt[n]{\deg I_n}$ exists and is a positive rational number. It would be interesting to determine these asymptotic invariants.
\item
Similar questions arise when one varies Example \ref{exa:det} by considering determinantal ideals $I_n$ that are generated by the $t \times t$ minors of a generic $c \times n$ matrix $X_n$ that involve only the columns of the $\Inc$-orbits of a given $t$-tuple of distinct positive integers. For example, taking $t = 3$ and using the $\Inc$-orbit of $(1,5,7)$, the ideal $I_n$ is generated by $3$-minors whose column indices are in the set $\{(j_1, j_2, j_3) \; | \; 1 \le j_1,\ j_1 + 4 \le j_2,\ 2 + j_2 \le j_3 \le n\}$. The ideals $I_n$ determine an $\FIO$-ideal of $(\XO{1})^{\otimes c}$. Thus, their $p$-th syzygies stabilize in the above sense and $\dim K[x_1,\ldots,x_n]/I_n$ and $\deg I_n$ grow eventually linearly and exponentially, respectively.
\end{enumerate}
\end{ex}


\section{$\FIO$-Koszul complexes}
\label{sec:koszul}

There is a canonical complex of free $\FIO$-modules over any $\FIO$-algebra $\Ab$ that is analogous to the classical Koszul complex. It gives examples of projective resolutions as used in the previous section.

We need some notation.
Choose any $a \in A_O$, where $O = \{o\}$ is a one-element set. For every element $t$ of a totally ordered set $T \neq \emptyset$, let $\nu_{T, t}\colon O \to T$ be the map with $\nu_{T, t} (o) = t$. Set
\[
a_{T, t} = \Ab (\nu_{T, t}) (a) = \nu_{T, t}^* (a).
\]
Note that every $a_{T, t}$ is in $\Ab_T$.

\begin{rem}
\
\begin{enumerate}
\item
For every $\eps \in \Hom_{\FIO} (S, T)$, the definitions imply $\eps \circ \nu_{S, s} = \nu_{T, \eps (s)}$, which gives, for every
$s \in |S|$,
\begin{align}
\label{eq:compatible}
\eps^* (a_{S, s}) = a_{T, \eps (s)}.
\end{align}
\item
Let $\Ib = \langle a \rangle_{\Ab}$ be the ideal of $\Ab$ that is generated by $a$. Then, for every totally ordered set $T$, the elements $a_{T, 1},\ldots,a_{T, |T|}$ generate the ideal $\Ib_T$ of $\Ab_T$.
\end{enumerate}
\end{rem}

We use the above notation to define certain $\FIO$-morphisms.

\begin{defn}
For any integer $d > 0$, define a natural transformation $\phi_{d}\colon \Fo{d} \to \Fo{d-1}$ of free modules over $\Ab$ by
\[
\phi_d (S) (b e_{\pi}) = b \cdot \sum_{j = 1}^d (-1)^{j+1} a_{S, \pi (j)} e_{\pi_j},
\]
where $b \in \Ab_S$, $\pi \in \Hom_{\FIO} ([d], S)$, $\eps \in \Hom_{\FIO} (S, T)$, and $\pi_j\colon [d] \setminus \{j\} \to S$ is the restriction of $\pi$.
\end{defn}

Using Equation \eqref{eq:compatible}, one checks that this gives indeed a natural transformation. It is determined by $a$. Furthermore, these morphisms can be used to form an infinite complex.

\begin{lem}
\label{lem:Koszul-complex}
Let $\Ab$ be an $\FIO$-algebra. Every element $a \in \Ab_1$ determines a complex of free $\FIO$-modules over $\Ab$
\begin{align}
\label{eq:Koszul-complex}
\cdots \longrightarrow \Fo{d} \stackrel{\phi_{d}}{\longrightarrow} \Fo{d-1} \longrightarrow \cdots \longrightarrow \Fo{1} \stackrel{\phi_{1}}{\longrightarrow} \Fo{0} = \Ab \longrightarrow 0.
\end{align}
\end{lem}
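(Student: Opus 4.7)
The plan is to verify the complex property $\phi_{d-1} \circ \phi_d = 0$ for every $d \ge 2$, which is the only nontrivial content of the lemma; the fact that each $\phi_d$ is a well-defined natural transformation between the free $\FIO$-modules $\Fo{d}$ and $\Fo{d-1}$ is already guaranteed by the paragraph preceding the lemma, via \eqref{eq:compatible}. Since a natural transformation is zero exactly when all its components are zero, I would check componentwise: for every totally ordered finite set $S$ and every $\pi \in \Hom_{\FIO}([d], S)$, the composition $\phi_{d-1}(S) \circ \phi_d(S)$ must vanish on the basis element $e_\pi$ of the free $\Ab_S$-module $\Fo{d}_S = \bigoplus_\pi \Ab_S e_\pi$, and then $\Ab_S$-linearity takes care of the general case.

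Applying the definition twice, and using that each restriction $\pi_j$ is silently reindexed via the order-preserving identification $[d]\setminus\{j\}\cong [d-1]$, one expands
\[
\phi_{d-1}(S)\bigl(\phi_d(S)(e_\pi)\bigr) = \sum_{j=1}^{d}\sum_{k=1}^{d-1} (-1)^{j+k}\, a_{S,\pi(j)}\, a_{S,\pi_j(k)}\, e_{(\pi_j)_k}.
\]
This rewrites the claim as a double sum indexed by pairs of positions successively removed from the domain of $\pi$.

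The core of the argument is the standard Koszul-style sign cancellation. For each pair $1 \le i < l \le d$, exactly two terms of the double sum yield the same surviving restriction of $\pi$ to $[d]\setminus\{i,l\}$: the term $(j,k) = (l,i)$, which contributes sign $(-1)^{l+i}$ and monomial $a_{S,\pi(l)}\, a_{S,\pi(i)}$; and the term $(j,k) = (i, l-1)$, which contributes sign $(-1)^{i+(l-1)} = -(-1)^{i+l}$ and monomial $a_{S,\pi(i)}\, a_{S,\pi(l)}$, where the apparent index shift from $l-1$ back to $l$ comes from the reindexing $[d]\setminus\{i\}\cong [d-1]$ that moves all positions strictly above $i$ down by one. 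Commutativity of $\Ab_S$ makes the two monomials equal, so the two contributions cancel, and summing over all such pairs $(i,l)$ yields $\phi_{d-1}(S)\circ\phi_d(S) = 0$.

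The only real obstacle is the bookkeeping of this reindexing, since the shift is precisely what produces the off-by-one difference in exponents between the two cancelling signs. Once it is tracked carefully—exactly the combinatorics that proves the simplicial boundary operator squares to zero—the complex property is immediate, and the lemma follows.
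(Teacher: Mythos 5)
Your proposal is correct: the sign computation, including the careful tracking of the reindexing $[d]\setminus\{i\}\cong[d-1]$ that produces the $(-1)^{i+(l-1)}$ versus $(-1)^{l+i}$ cancellation, is exactly right, and the reduction to componentwise verification on basis elements $e_\pi$ is the same first step the paper takes. The only difference is that the paper, after restricting to a component at $T$, simply identifies it with the classical Koszul complex on $a_{T,1},\ldots,a_{T,|T|}$ over $\Ab_T$ (noting $\Fo{d}_T=0$ for $d>|T|$) and invokes the known fact that its differential squares to zero, whereas you re-derive that cancellation by hand; the paper's identification also does extra work later, since Proposition~\ref{prop:Koszul-exact} reuses it to import the classical exactness criterion for regular sequences.
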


\begin{proof}
For every totally ordered finite set $T$, the component of the Complex \eqref{eq:Koszul-complex} at $T$
\begin{align*}
0  \longrightarrow \Fo{|T|}_T \longrightarrow \Fo{|T|-1}_T \longrightarrow \cdots \longrightarrow \Fo{1}_T \longrightarrow\Fo{0}_T = \Ab_T \longrightarrow 0
\end{align*}
has length $|T|$ because $\Fo{d}_T = 0$ if $d > |T|$. (Note that this also shows that $\Fo{d}$ is not isomorphic to the $d$-th exterior power of $\Fo{1}$.) In fact, this component is the classical Koszul complex on the elements $a_{T,1},\ldots,a_{T, |T|}$ with coefficients in $\Ab_T$. Thus, the claim follows.
\end{proof}

We call the complex in Lemma~\ref{lem:Koszul-complex} the \emph{Koszul complex} on $a$ with coefficients in $\Ab$.
Using the classical characterization of the exactness of a Koszul complex, we obtain an analogous result for $\FIO$-algebras.

\begin{prop}
\label{prop:Koszul-exact}
Let $a \in \Ab_1$ be an element of an $\FIO$-algebra $\Ab$. If, for every totally ordered finite set $T$, the elements $a_{T,1},\ldots,a_{T, |T|}$ form an $\Ab_T$-regular sequence, then the Koszul complex on $a$ with coefficients in $\Ab$ is acyclic.
\end{prop}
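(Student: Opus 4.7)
The plan is to reduce acyclicity of the $\FIO$-complex to a pointwise statement and then invoke the classical result on Koszul complexes of regular sequences. First I would recall from Remark~\ref{rem:abelian-cat-submodules}(iii) that the category $\FIO$-$\Mod(\Ab)$ is abelian with kernels, images, and cokernels defined componentwise. Consequently, a sequence of $\FIO$-morphisms is exact at $\Fo{d}$ if and only if, for every totally ordered finite set $T$, the sequence of $\Ab_T$-modules obtained by evaluation at $T$ is exact at $\Fo{d}_T$. Hence the Koszul complex on $a$ is acyclic as an $\FIO$-complex precisely when each of its evaluations at $T$ is acyclic.

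Next I would use the identification carried out in the proof of Lemma~\ref{lem:Koszul-complex}: the evaluation at $T$ of the complex \eqref{eq:Koszul-complex} is
\[
0 \longrightarrow \Fo{|T|}_T \stackrel{\phi_{|T|}(T)}{\longrightarrow} \Fo{|T|-1}_T \longrightarrow \cdots \longrightarrow \Fo{1}_T \stackrel{\phi_1(T)}{\longrightarrow} \Ab_T \longrightarrow 0,
\]
which, after the canonical identification $\Fo{d}_T \cong \bigwedge^d (\Ab_T)^{|T|}$ matching $e_\pi$ with $e_{\pi(1)} \wedge \cdots \wedge e_{\pi(d)}$ for $\pi \in \Hom_{\FIO}([d],T)$, coincides with the classical Koszul complex $K_\lpnt(a_{T,1},\ldots,a_{T,|T|};\Ab_T)$. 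The signs in the definition of $\phi_d$ match exactly those of the classical Koszul differential, and the images $a_{T,\pi(j)}$ appearing in $\phi_d(T)(be_\pi)$ are precisely the entries of the regular sequence by the definition of $a_{T,t}$.

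Then I would invoke the standard theorem (see, e.g., Matsumura, \emph{Commutative Ring Theory}, Theorem~16.5) that if $a_{T,1},\ldots,a_{T,|T|}$ is an $\Ab_T$-regular sequence, the Koszul complex $K_\lpnt(a_{T,1},\ldots,a_{T,|T|};\Ab_T)$ has vanishing homology in all positive degrees. Combined with the pointwise characterization of exactness above, this gives that the $\FIO$-Koszul complex on $a$ is acyclic.

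The case $T = \emptyset$ is trivial, since then $\Fo{d}_\emptyset = 0$ for all $d \geq 1$ and the complex evaluates to $0 \to \Ab_\emptyset \to 0$, which is vacuously acyclic in positive degrees. Thus there is no genuine obstacle: the proof is essentially a bookkeeping translation between the $\FIO$-language and the classical Koszul setup. The only subtlety worth pointing out explicitly is verifying that the $\FIO$-differentials $\phi_d(T)$ agree with the classical Koszul differentials under the identification $\Fo{d}_T \cong \bigwedge^d (\Ab_T)^{|T|}$; this is where the sign convention $(-1)^{j+1}$ and the compatibility $\eps^*(a_{S,s}) = a_{T,\eps(s)}$ from equation \eqref{eq:compatible} play their role.
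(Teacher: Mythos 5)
Your proof is correct and follows essentially the same route as the paper, which (via the proof of Lemma~\ref{lem:Koszul-complex}) identifies the component of the complex at each $T$ with the classical Koszul complex on $a_{T,1},\ldots,a_{T,|T|}$ over $\Ab_T$ and then invokes the classical acyclicity criterion, exactness being checked componentwise in the abelian category $\FIO$-$\Mod(\Ab)$. Just note that your identification $\Fo{d}_T \cong \bigwedge^d (\Ab_T)^{|T|}$ is only used for a fixed $T$ as $\Ab_T$-modules, which is all that is needed and is consistent with the paper's remark that $\Fo{d}$ is not the $d$-th exterior power of $\Fo{1}$ as an $\FIO$-module.
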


\begin{ex}
Consider the ideal $\Ib$ of $\XO{1}$ that is generated by $x_1$. Then $\Ib_n = \langle x_1,\ldots,x_n \rangle$ is generated by a regular sequence for every $n \in \N$, and so the Koszul complex determined by $x_1$ is acyclic.
\end{ex}

\begin{rem}
If $\Ab$ is a $\Z$-graded $\FIO$-algebra and $a$ is homogeneous, then the Koszul complex on $a$ is complex of graded $\FIO$-modules if one uses suitable degree shifts. For an integer $k$ and a graded $\FIO$-module $\M$, we denote by $\M (k)$ the module with the same module structure as $\M$, but with an (internal) grading given by
$[((\M (k))_n]_j = [\M_n]_{j + k}$ for all $j \in \Z$. We illustrate this by an example.
\end{rem}

\begin{ex}
\label{ex:Koszul-lin2}
Let $\Ib$ be the ideal of $\XO{1}$ that is generated by $x_1^k$ for some $k \in \N$. Then $\Ib_n = \langle x_1^k,\ldots,x_n^k \rangle$ for every $n \in \N$. Thus, the graded Koszul complex on $x_1^k$ is
\begin{align*}
\cdots \to \Fo{d}(-d k) \to\Fo{d-1}(- (d-1)k) \to \cdots \to \Fo{1}(-k) \to \XO{1} \to 0.
\end{align*}
For every fixed integer $p$, the degrees of generators of the $p$-th syzygy module of $\Ib_n$ are bounded above by a constant that is independent of $n$, which is in line with the above stabilization result. In fact, the $p$-syzygies are generated in degree $p k$.

In contrast to the situation for $\FI$-modules with constant coefficients, that is, over $\XI{0}$ (see \cite{CE}), this shows that for modules over $\XO{1}$ the degrees of the generators of the $p$-th syzygy modules cannot be uniformly bounded above independent of $p$. Furthermore, observe that the number of minimal generators of $I_n$ grows with $n$. Thus, the recent boundedness results in \cite{Draisma-noeth} (see also \cite{ESS}) do not apply directly to the categories $\FIO$-$\Mod (\Ab)$ and $\FI$-$\Mod (\Ab)$.
\end{ex}


\end{document}